\documentclass{article}
\usepackage[a4paper, margin=3cm]{geometry}
\usepackage{graphicx} 
\usepackage{amsmath}
\usepackage{amssymb}
\usepackage[utf8]{inputenc}
\usepackage{amsfonts}
\usepackage{amsthm}
\usepackage{hyperref}
\usepackage{mathabx}
\usepackage{dsfont}
\usepackage{xcolor}
\usepackage{tikz}
\usetikzlibrary{shapes, decorations}
\usepackage{qtree}
\usepackage{multicol}
\usepackage{mathabx}
\usepackage{faktor}

\theoremstyle{plain}
\newtheorem{theorem}{Theorem} 
\newtheorem{lemma}{Lemma} 
\newtheorem{prop}{Propositon}
\newtheorem{cor}{Corollary}
 
\newtheorem{remark}{Remark}

\theoremstyle{definition}
\newtheorem{defn}{Definition} 
\theoremstyle{remark}

\newcommand{\sing}{\text{sing}}

\newcommand{\defref}[1]{\hyperref[#1]{Definition \ref*{#1}}}
\newcommand{\Defref}[1]{\hyperref[#1]{Definition \ref*{#1}}}
\newcommand{\lemref}[1]{\hyperref[#1]{Lemma \ref*{#1}}}
\newcommand{\Lemref}[1]{\hyperref[#1]{Lemma \ref*{#1}}}
\newcommand{\thmref}[1]{\hyperref[#1]{Theorem \ref*{#1}}}
\newcommand{\Thmref}[1]{\hyperref[#1]{Theorem \ref*{#1}}}
\newcommand{\corref}[1]{\hyperref[#1]{Corollary \ref*{#1}}}
\newcommand{\Corref}[1]{\hyperref[#1]{Corollary \ref*{#1}}}
\newcommand{\propref}[1]{\hyperref[#1]{Propositon \ref*{#1}}}
\newcommand{\figref}[1]{\hyperref[#1]{Figure \ref*{#1}}}
\newcommand{\rmkref}[1]{\hyperref[#1]{Remark \ref*{#1}}}

\tikzset{
          solid circle/.style={circle,draw,inner sep=1.2,fill=black},
          solid green circle/.style={circle,draw,inner sep=1.2,fill=green, draw=green},
          solid red circle/.style={circle,draw,inner sep=1.2,fill=red, draw=red},
          solid blue circle/.style={circle,draw,inner sep=1.2,fill=blue, draw=blue},
          solid yellow circle/.style={circle,draw,inner sep=1.2,fill=yellow, draw=yellow},
          solid cyan circle/.style={circle,draw,inner sep=1.2,fill=cyan, draw=cyan},
          big solid circle/.style={circle,draw,inner sep=1.2,fill=black, minimum size=0.3cm}, 
          big solid green circle/.style={circle,draw,inner sep=1.2,fill=green, draw=green, minimum size=0.3cm},
          big solid red circle/.style={circle,draw,inner sep=1.2,fill=red, draw=red, minimum size=0.3cm},
          big solid blue circle/.style={circle,draw,inner sep=1.2,fill=blue, draw=blue, minimum size=0.3cm},
          big solid yellow circle/.style={circle,draw,inner sep=1.2,fill=yellow, draw=yellow, minimum size=0.3cm},
          big solid cyan circle/.style={circle,draw,inner sep=1.2,fill=cyan, draw=cyan, minimum size=0.3cm},
          hollow circle/.style={circle,draw,inner sep=1.2},
          green hollow circle/.style={circle,draw,inner sep=1.2, draw=green},
          red hollow circle/.style={circle,draw,inner sep=1.2, draw=red},
          blue hollow circle/.style={circle,draw,inner sep=1.2, draw=blue},
          yellow hollow circle/.style={circle,draw,inner sep=1.2, draw=yellow},
          cyan hollow circle/.style={circle,draw,inner sep=1.2, draw=cyan},
          big hollow circle/.style={circle,draw,inner sep=1.2, minimum size=0.3cm},
          big hollow green circle/.style={circle,draw,inner sep=1.2, draw=green, minimum size=0.3cm},
          big hollow red circle/.style={circle,draw,inner sep=1.2, draw=red, minimum size=0.3cm},
          big hollow blue circle/.style={circle,draw,inner sep=1.2, draw=blue, minimum size=0.3cm},
          big hollow yellow circle/.style={circle,draw,inner sep=1.2, draw=yellow, minimum size=0.3cm},
          big hollow cyan circle/.style={circle,draw,inner sep=1.2, draw=cyan, minimum size=0.3cm},
          solid triangle/.style={regular polygon, regular polygon sides=3,draw,inner sep=1.2,fill=black},
          green solid triangle/.style={regular polygon, regular polygon sides=3,draw,inner sep=1.2,fill=green, draw=green},
          solid red triangle/.style={regular polygon, regular polygon sides=3,draw,inner sep=1.2,fill=red, draw=red},
          solid blue triangle/.style={regular polygon, regular polygon sides=3,draw,inner sep=1.2,fill=blue, draw=blue},
          solid yellow triangle/.style={regular polygon, regular polygon sides=3,draw,inner sep=1.2,fill=yellow, draw=yellow},
          solid cyan triangle/.style={regular polygon, regular polygon sides=3,draw,inner sep=1.2,fill=cyan, draw=cyan},
          big solid triangle/.style={regular polygon, regular polygon sides=3,draw,inner sep=1.2,fill=black, minimum size=0.3cm},
          big solid green triangle/.style={regular polygon, regular polygon sides=3,draw,inner sep=1.2,fill=green, draw=green, minimum size=0.3cm},
          big solid red triangle/.style={regular polygon, regular polygon sides=3,draw,inner sep=1.2,fill=red, draw=red, minimum size=0.3cm},
          big solid blue triangle/.style={regular polygon, regular polygon sides=3,draw,inner sep=1.2,fill=blue, draw=blue, minimum size=0.3cm},
          big solid yellow triangle/.style={regular polygon, regular polygon sides=3,draw,inner sep=1.2,fill=yellow, draw=yellow, minimum size=0.3cm},
          big solid cyan triangle/.style={regular polygon, regular polygon sides=3,draw,inner sep=1.2,fill=cyan, draw=cyan, minimum size=0.3cm},
          hollow triangle/.style={regular polygon, regular polygon sides=3,draw,inner sep=1.2},
          green hollow triangle/.style={regular polygon, regular polygon sides=3,draw,inner sep=1.2, draw=green},
          red hollow triangle/.style={regular polygon, regular polygon sides=3,draw,inner sep=1.2, draw=red},
          blue hollow triangle/.style={regular polygon, regular polygon sides=3,draw,inner sep=1.2, draw=blue},
          yellow hollow triangle/.style={regular polygon, regular polygon sides=3,draw,inner sep=1.2, draw=yellow},
          cyan hollow triangle/.style={regular polygon, regular polygon sides=3,draw,inner sep=1.2, draw=cyan},
          big hollow triangle/.style={regular polygon, regular polygon sides=3,draw,inner sep=1.2, minimum size=0.3cm},
          big hollow green triangle/.style={regular polygon, regular polygon sides=3,draw,inner sep=1.2, draw=green, minimum size=0.3cm},
          big hollow red triangle/.style={regular polygon, regular polygon sides=3,draw,inner sep=1.2, draw=red, minimum size=0.3cm},
          big hollow blue triangle/.style={regular polygon, regular polygon sides=3,draw,inner sep=1.2, draw=blue, minimum size=0.3cm},
          big hollow yellow triangle/.style={regular polygon, regular polygon sides=3,draw,inner sep=1.2, draw=yellow, minimum size=0.3cm},
          big hollow cyan triangle/.style={regular polygon, regular polygon sides=3,draw,inner sep=1.2, draw=cyan, minimum size=0.3cm},
          solid square/.style={regular polygon, regular polygon sides=4,draw,inner sep=1.2,fill=black},
          solid green square/.style={regular polygon, regular polygon sides=4,draw,inner sep=1.2,fill=green, draw=green},
          solid red square/.style={regular polygon, regular polygon sides=4,draw,inner sep=1.2,fill=red, draw=red},
          solid blue square/.style={regular polygon, regular polygon sides=4,draw,inner sep=1.2,fill=blue, draw=blue},
          solid yellow square/.style={regular polygon, regular polygon sides=4,draw,inner sep=1.2,fill=yellow, draw=yellow},
          solid cyan square/.style={regular polygon, regular polygon sides=4,draw,inner sep=1.2,fill=cyan, draw=cyan},
          big solid square/.style={regular polygon, regular polygon sides=4,draw,inner sep=1.2,fill=black, minimum size=0.3cm},
          big solid green square/.style={regular polygon, regular polygon sides=4,draw,inner sep=1.2,fill=green, draw=green, minimum size=0.3cm},
          big solid red square/.style={regular polygon, regular polygon sides=4,draw,inner sep=1.2,fill=red, draw=red, minimum size=0.3cm},
          big solid blue square/.style={regular polygon, regular polygon sides=4,draw,inner sep=1.2,fill=blue, draw=blue, minimum size=0.3cm},
          big solid yellow square/.style={regular polygon, regular polygon sides=4,draw,inner sep=1.2,fill=yellow, draw=yellow, minimum size=0.3cm},
          big solid cyan square/.style={regular polygon, regular polygon sides=4,draw,inner sep=1.2,fill=cyan, draw=cyan, minimum size=0.3cm},
          hollow square/.style={regular polygon, regular polygon sides=4,draw,inner sep=1.2},
          green hollow square/.style={regular polygon, regular polygon sides=4,draw,inner sep=1.2, draw=green},
          red hollow square/.style={regular polygon, regular polygon sides=4,draw,inner sep=1.2, draw=red},
          blue hollow square/.style={regular polygon, regular polygon sides=4,draw,inner sep=1.2, draw=blue},
          yellow hollow square/.style={regular polygon, regular polygon sides=4,draw,inner sep=1.2, draw=yellow},
          cyan hollow square/.style={regular polygon, regular polygon sides=4,draw,inner sep=1.2, draw=cyan},
          big hollow square/.style={regular polygon, regular polygon sides=4,draw,inner sep=1.2, minimum size=0.3cm},
          big hollow green square/.style={regular polygon, regular polygon sides=4,draw,inner sep=1.2, draw=green, minimum size=0.3cm},
          big hollow red square/.style={regular polygon, regular polygon sides=4,draw,inner sep=1.2, draw=red, minimum size=0.3cm},
          big hollow blue square/.style={regular polygon, regular polygon sides=4,draw,inner sep=1.2, draw=blue, minimum size=0.3cm},
          big hollow yellow square/.style={regular polygon, regular polygon sides=4,draw,inner sep=1.2, draw=yellow, minimum size=0.3cm},
          big hollow cyan square/.style={regular polygon, regular polygon sides=4,draw,inner sep=1.2, draw=cyan, minimum size=0.3cm},
          left label/.style={above left,midway},
          right label/.style={above right,midway}
        }

\title{Gluing diagrams part 1:\\ A constructive solution for the Higman-Thompson group isomorphism problem}
\author{Roman Gorazd\\ \url{roman.gorazd@gmail.com}}
\date{May 2026}

\begin{document}

\maketitle
\begin{abstract}
	This paper introduces gluing diagrams a combinatorial tool to construct homomorphisms between the shift pseudogroups of directed graphs and thus also their full groups of shifts. We will establish which of these diagrams produce isomorphisms. As an application, using the interpretation of Higman-Thompson groups as full groups of shifts of specific graphs, we will describe a procedure that constructs gluing diagrams that explicitly describe the isomorphisms between Higman-Thompson groups, conjectured by Higman\cite{higman74} and whose existence was proven by Pardo\cite{Pardo2011}.
\end{abstract}
\section{Introduction}
Higman-Thompson groups \(V_{n,r}\) were introduced by Graham Higman in 1974\cite{higman74} as a generalization of Thompson's \(V\) group\cite{cannon1996introductory}. In the same paper he proved that if \(V_{n,r}\cong V_{m,s}\) we have \(n=m\) and \(\gcd(r,n-1)=\gcd(s,n-1)\) and conjectured the converse implication. In 2010 Enrique Pardo\cite{Pardo2011} proved the converse implication by using results from the study of Leavitt path algebras\cite{abrams2015leavitt}, without giving an explicit way of construct the desired isomorphisms. 

This paper we will give a combinatorial way of constructing these isomorphisms. To do this we will view the Higman-Thompson groups as consisting of almost automorphisms on quasi regular trees as in \cite{scott84}. We will then introduce the tool of gluing diagrams, which allows us to describe homomorphisms of shift pseudogroup (as defined in \cite{ARKLINT_EILERS_RUIZ_2018}) of directed graphs, acting on the path monoid (which can be seen as the monoid of clopen sets of the boundary of a graph).

This homomorphism will restrict to a homomorphism between the full group of the one-sided shift groupoid\cite{Matui2015}, which in the special case of graphs with one source and one other vertex is the Higman-Thompson group. Through \lemref{non-inj-lem}, \propref{surj-lem} and \thmref{shift-surj-thm} we will specify the properties of the gluing diagram that are sufficient (and also necessary in many cases) for the homomorphism induced by it to be an isomorphism. We will also introduce a way of modify these diagrams without changing the induced function.

Finally, in the last section focusing on the special case of Higman-Thompson groups we will introduce moves on the gluing diagrams between graphs with a source vertex and one other vertex  that keep the induced function an isomorphism. These moves will allow us to follow the Euclidean Algorithm to construct gluing diagrams that induce isomorphisms \(V_{n,r}\cong V_{n,s}\) whenever \(\gcd(r,n-1)=\gcd(s,n-1)\).

In the sequels to this paper we will explore which homomorphisms of the shift pseudogroup are described by gluing diagrams and how we can realize the homomorphisms described by gluing diagrams by graph moves.

\section{Definitions}
In this paper we will consider graphs as directed graphs with multiple edges and loops. Formally we define them as follows.
\begin{defn}\label{graph-defn}
    A graph \(G\) is a tuple \((VG,EG,o_G,t_G)\), where \(VG,EG\) are sets and \(o_G,t_G:EG\to VG\) are functions. We will call  \(VG\) vertices, \(EG\) edges, \(o_G(e)\) the origin and \(t_G(e)\) the terminus of an edge \(e\). We will call vertices \(v\in VG\) regular if \(0<|o^{-1}(v)|<\infty\) and singular otherwise. Denote the set of regular vertices by \(VG_{\text{reg}}\) and the set of singular vertices by \(VG_{\text{sing}}\). We will call vertex \(v\in VG\) a \textbf{sink} if \(o^{-1}(v)=\emptyset\), and a \textbf{source} if \(t^{-1}(v)=\emptyset\).
\end{defn}
If the graph we are working in is clear we will drop the subscript \(G\) for notational convenience. We will be assuming throughout the paper that any graph is \(o\)-finite i.e. \(\forall v \in VG,\ |o^{-1}(v)|<\infty\).  
\begin{defn}\label{graph-monoid-defn}
    For any graph \(G\) we define its graph monoid \(\mathcal{M}_G\) to be the commutative monoid generated by \(VG\) with the relations 
        \[\forall v\in VG_{\text{reg}},\ v= \sum_{e\in o^{-1}(v)}t(e)\] 
\end{defn}

Any graph gives rise to a set of paths
    \[\mathcal{P}(G)=\{(e_i)^n_{i=1}\in (EG)^{*}\mid \forall 1\leq i < n\ t(e_i)=o(e_{i+1})\}\cup\{\varepsilon_v\mid v\in VG\},\]
where the elements \(\varepsilon_v\) denote the empty path based on the vertex \(v\). Each non-empty path \(p=(e_i)^n_{i=1}\in \mathcal{P}(G)\) has an origin \(O(p):=o(e_1)\), terminus \(T(p):=t(e_n)\) and length \(|p|:=n\), and additionally we have \(T(\varepsilon_v)=O(\varepsilon_v):=v\), \(|\varepsilon_v|:=0\) for any vertex \(v\in VG\). We can concatenate two non-empty paths \(p=(e_i)^n_{i=1}, q=(d_j)^m_{j=1}\in\mathcal{P}(G)\) if \(T(p)=O(q)\) by setting
	\[pq:=e_1\dots e_nd_1\dots d_m\in\mathcal{P}(G),\]
for empty paths we will set
	\[p\varepsilon_{T(p)}=\varepsilon_{O(p)}p:=p\]
for any \(p\in\mathcal{P}(G)\).
	
This allows us to define the prefix order \(\preceq\) on the path space by
	\[p\preceq q \iff \exists r\in\mathcal{P}(G),\ q=pr.\]
This order makes \(\mathcal{P}(G)\) a meet semilattice. We will call two paths \(p,q\in \mathcal{P}(G,R)\) independent if \(p\not\preceq q\) and \(q\not\preceq p\), we then write \(p\perp q\).  We will call a path singular if \(T(p)\in VG_{\text{sing}}\) and regular otherwise. Additionally, we can view the path space as a graph by defining \(E\mathcal{P}(G)= \{(p,pe)\mid p\in \mathcal{P}(G),e\in o_G^{-1}(T(p))\}\) with \(o_{\mathcal{P}(G)}((p,pe))=p\) and \(t_{\mathcal{P}(G)}((p,pe))=pe\).

Using this we define for each vertex \(v\in VG\) the subsemilattice 
    \[\mathcal{P}(G,v):=O^{-1}(\{v\})= \{p\in \mathcal{P}(G)\mid O(p)=v\},\]
i.e. the collection of all paths that originate in \(v\). \(\mathcal{P}(G,v)\) can be of course seen as a subgraph of \(\mathcal{P}(G)\). We will mostly consider graphs with roots i.e. vertices \(R\in VG\) s.t.
    \[\forall v\in VG, \exists p\in \mathcal{P}(G)\ O(p)=R,\ T(p)=v.\]
A rooted graph will be a pair \((G,R)\) where \(G\) is a graph and \(R\in VG\) a root.

\begin{defn}\label{path-monoid-def}
    For any rooted graph \((G,R)\) the path monoid \(\mathcal{M}_p(G,R)\) is the graph monoid of \(\mathcal{P}(G,R)\), i.e. the commutative monoid generated by the set \(\mathcal{P}(G,R)\) and the relations
    \begin{align*}
        \forall p\in \mathcal{P}(G,R)\quad T(p)\in VG_{\text{reg}}\implies p=\sum_{e\in o^{-1}(T(p))}pe
    \end{align*}
	We will denote for any \(x,y \in  \mathcal{M}_p(G,R)\), \(x\preceq_{\mathcal{M}} y\) if \(\exists a\in \mathcal{M}_p(G,R),\ x=y+a\). We will call two elements \(x,y\) of the monoid independent (\(x\perp y\)) if
		\[(x\preceq_{\mathcal{M}} z)\land (y\preceq_{\mathcal{M}} z)\implies z=0\]
\end{defn}


We will drop the subscript \(\mathcal{M}\) most of the time when there is no possibility of confusion. Later we will show that being a prefix in the path monoid and in the space of paths only clash in very limited circumstances.

We will show that each element of this monoid can be written as a sum of paths in a finite set of pairwise independent paths. To this end we introduce some notation to help us when talking about sets of paths. 
\begin{defn}
	Take \(M,N\subseteq \mathcal{P}(G)\) then we can define
	\begin{itemize}
		\item \(M\) is \textbf{independent} if
			\[\forall p\neq q \in M,\ p\perp q.\]
		\item \(M\) \textbf{lies under} \(N\) if
		 	\[\forall q\in M,\exists p\in N\ p\preceq q\]
		we will write \(N\preceq M\).
		\item \(M\) \textbf{exactly covers} \(N\) if \(N\preceq M\) and for any path \(p\) with \(N\preceq \{p\}\) we have some \(r\) with \(M\preceq \{r\}\) and \(p\preceq r\).
 		\item \(M\) \textbf{covers} \(N\) if	
			\[\{p\in M\mid N\preceq \{p\}\}\]
		covers \(N\) exactly.
		\item We will also call \(M,N\) independent of each other, \(M\perp N\), if there exists no path \(p\) s.t. \(M\preceq p\succeq N\).
	\end{itemize}
	If a finite set of paths covers \(\{\varepsilon_R\}\) exactly and is independent we call it a \textbf{basis}.
    For some  \(q\in M\), \(M\) expanded by \(q\) is 
        \[M^q:=(M\setminus\{q\})\cup \{qe\mid e\in o^{-1}(T(q))\}\]
	and if we have a function \(f:M\to N\) between two independent sets of paths we can define \(f^p:M^p\to N^{f(p)}\) for any \(p\in M\) by setting:
		\[f^p|_{M\setminus \{p\}}:=f|_{M\setminus \{p\}}\text{ and } \forall e\in o^{-1}(T(p)),\ f^p(pe):=f(p)e.\]
\end{defn}
For notational convenience we will write \(M\preceq p\) or \(p\preceq M\), instead of \(M\preceq \{p\}\) or \(\{p\}\preceq M\) respectively, for any \(p\in \mathcal{P}(G),M\subseteq\mathcal{P}(G)\)
We note that if \(M\) lies under or covers some set \(N\) so does \(M^q\). Additionally, if \(M\) independent then \(M^q\) is as well and if \(M\) is finite and \(q\) is regular then \(M^q\) is also finite. 

We will define for  any \(M,N\subseteq \mathcal{P}(G)\) 
	\[M_N:=\{p\in M\mid N\preceq p\}.\]

\begin{lemma}\label{lem1}
    For any \(p\in \mathcal{P}(G,R)\) and any finite \(M\subseteq \mathcal{P}(G,R)\) and coefficients \(k_q\in \mathbb{Z}_{>0}\) for each \(q\in M\) we have 
        \[p=\sum_{q\in M} k_q q\]
    if and only if \(M\) is independent, \(k_q=1\) for each \(q\in M\) and \(M\) exactly covers some \(p_0\preceq p\), s.t. for any \(p_0\preceq p_1\precneq p\) we have \(|o^{-1}(T(p_1))|=1\). 
\end{lemma}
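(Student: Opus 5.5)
The plan is to prove the two directions separately. For the \emph{only if} direction, instead of working with the relations directly, I would use a family of monoid homomorphisms that turn equality in \(\mathcal{M}_p(G,R)\) into equality of multisets.

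For the \emph{if} direction, suppose \(M\) is independent, all \(k_q=1\), and \(M\) exactly covers \(p_0\preceq p\) with the single-edge condition. First I show \(p_0=p\) in \(\mathcal{M}_p(G,R)\). Every \(p_1\) with \(p_0\preceq p_1\precneq p\) has exactly one outgoing edge \(e\), so the defining relation reads \(p_1=p_1e\). Chaining these relations along the path from \(p_0\) to \(p\) gives \(p_0=p\).

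Next I show \(p_0=\sum_{q\in M}q\). I would prove that \(M\) is obtained from \(\{p_0\}\) by finitely many expansions at regular paths, by induction on \(\sum_{q\in M}(|q|-|p_0|)\).
\begin{itemize}
\item If \(M\neq\{p_0\}\), pick \(q\in M\) of maximal length with \(q\neq p_0\) and let \(r\) be its parent.
\item The path \(r\) lies above \(p_0\). By exact covering, every child \(re\) has some extension \(s\) lying under \(M\); by maximality of \(|q|\) and independence of \(M\), this forces \(re\in M\).
\item In particular \(T(r)\) is regular, because \(q=re\) is a child of \(r\), and \(o\)-finiteness gives finitely many children.
\item Replacing the children of \(r\) by \(r\) gives a smaller set \(M'\) that still exactly covers \(p_0\) and satisfies \(M'^{\,r}=M\). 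Each expansion step is exactly a defining relation, so the induction closes.
\end{itemize}

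For the \emph{only if} direction, fix \(n\) larger than \(|p|\) and all \(|q|\), \(q\in M\). Let \(F_n\) be the free commutative monoid on the set \(B_n\) of paths in \(\mathcal{P}(G,R)\) that either have length exactly \(n\), or have length at most \(n\) and singular terminus.
\begin{itemize}
\item Define \(\Phi_n\) on generators by sending \(r\) with \(|r|\le n\) to the formal sum of all \(s\in B_n\) with \(r\preceq s\). This sum is finite by \(o\)-finiteness. Paths longer than \(n\) never occur in our computation, so I would restrict attention to the submonoid generated by paths of length at most \(n\) (or send longer paths to \(0\)); either way the relations at \(|r|<n\) are what matter.
\item \(\Phi_n\) respects the relation \(r=\sum_e re\) for regular \(r\) with \(|r|<n\), because the sets of \(B_n\)-extensions of the children \(re\) partition the \(B_n\)-extensions of \(r\).
\item Applying \(\Phi_n\) to \(p=\sum k_q q\), the left side \(\Phi_n(p)\) is multiplicity-free. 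Every \(q\) has at least one extension in \(B_n\), so \(k_q=1\).
\item If \(q\prec q'\) in \(M\), then any \(B_n\)-extension of \(q'\) is also one of \(q\) and would appear twice. Hence \(M\) is independent.
\item The \(B_n\)-extensions of the paths in \(M\) are exactly those of \(p\).
\end{itemize}

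It remains to extract \(p_0\) from this last equality, and this is the step I expect to be the main obstacle. Let \(m\) be the meet of \(M\) in the semilattice, and take \(p_0\) to be the shorter of \(p\) and \(m\); these are comparable because both lie under extensions of \(p\).
\begin{itemize}
\item If \(m\precneq p\), some \(q\in M\) branches off below \(p\). Its \(B_n\)-extensions would then not extend \(p\), a contradiction. So in fact \(p\preceq m\) and \(p_0=p\).
\item Each \(p\preceq p_1\precneq m\) can have only one child. Otherwise the \(B_n\)-extensions through a child not leading to \(m\) extend \(p\) but no element of \(M\).
\item Exact covering of \(m\) by \(M\) follows in the same way: any path above \(m\) extends to some element of \(B_n\), which extends \(p\), hence extends some \(q\in M\).
\end{itemize}
This gives the conclusion with the roles of \(p\) and \(p_0\) as stated, after possibly re-indexing which of \(p\) and \(m\) is called \(p_0\). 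The delicate bookkeeping throughout is the treatment of singular (sink) paths of length less than \(n\).
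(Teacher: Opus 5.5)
Your \emph{if} direction is correct and is essentially the paper's contraction induction. The \emph{only if} direction has two genuine gaps. First, you never show that \(\Phi_n\) is well defined on \(\mathcal{M}_p(G,R)\). The identity \(p=\sum_{q\in M}k_q q\) holds in the quotient, and a derivation of it may pass through paths much longer than \(|p|\) and the \(|q|\). So the relations at \(|r|\geq n\) do matter, and ``paths longer than \(n\) never occur'' is unjustified. Sending longer paths to \(0\) breaks the relation at every regular \(r\) with \(|r|=n\): the left side goes to \(r\), the right side to \(0\). Restricting to the submonoid generated by short paths presupposes that equalities between short elements can be derived through short paths only. That is a confluence statement of the same strength as what you are proving. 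The fix is to first fix a finite chain \(p\sim x_1\sim\dots\sim x_k=\sum_{q\in M}k_q q\) of single relation applications, and then choose \(n\) larger than every path length occurring in it. Every relation used then sits at some \(|r|<n\), so \(\Phi_n\) (defined on the free monoid on paths of length \(\leq n\)) is constant along the chain. Alternatively, send \(r\) to the indicator function of its cylinder in the boundary of infinite and sink-terminating paths, which respects every relation at once.

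Second, the extraction step fails precisely in the case the lemma's \(p_0\)-clause exists for. You claim that \(m\precneq p\) forces some \(q\in M\) to branch off below \(p\), but \(q\) can be a proper prefix of \(p\). In \(G_{1,n}\) one has \(\varepsilon_R=d_0\). Taking \(p=d_0\) and \(M=\{\varepsilon_R\}\) gives \(m=\varepsilon_R\precneq p\) with no contradiction, and the only admissible choice is \(p_0=\varepsilon_R\neq p\). So ``\(p\preceq m\)'' is false in general, and no re-indexing rescues it, since the lemma requires \(p_0\preceq p\). The single-child chain from \(p\) up to \(m\) that you analyse is beside the point; in that situation \(M\) simply covers \(p\) exactly. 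The missing argument is as follows. Suppose some \(q\in M\) is a proper prefix of \(p\). Every other \(q'\in M\) is independent of \(q\), so its \(B_n\)-extensions avoid \(q\) and hence \(p\), a contradiction; thus \(M=\{q\}\). If some \(p_1\) with \(q\preceq p_1\precneq p\) had a second child, it would give \(B_n\)-extensions of \(q\) not extending \(p\); so \(p_0:=q\) works. Otherwise, since no \(q\) is independent of \(p\) (as you showed), every \(q\in M\) extends \(p\), and the equality of extension sets directly gives that \(M\) covers \(p\) exactly. With both repairs, your separating-invariant argument is a valid alternative to the paper's. The paper instead walks along the derivation chain and checks that ``independent, all coefficients \(1\), exactly covers an admissible \(p_0\)'' survives each single relation application.
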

\begin{proof}
    To show that this equality holds for each finite independent set \(M\) that exactly covers \(p\) we proceed by double induction  over the maximal distance of \(M\) from \(p\) i.e. 
        \[d_M:=\max\{|r|\mid r\in \mathcal{P}(G,T(p))\land pr\in M\}\]
    and the amount of paths in \(M\) that admit the maximal distance i.e.
        \[a_M:= |\{r\in \mathcal{P}(G,T(p))\mid (pr\in M)\land (|r|=d_M)\}|.\]
    First note that if \(d_M=0\) we have \(M=\{p\}\) and the lemma follows trivially. Otherwise, take some \(q\in M\) s.t. \(q=pr\) with some \(r\in \mathcal{P}(G)\) s.t. \(|r|=d_M\). Since \(d_M\geq 1\) we have some path \(q'\) with \(p\preceq q'\) and some \(e\in o^{-1}(T(q'))\) s.t. \(q=q'e\). We note that since \(M\) covers \(p\), for every \(f\in o^{-1}(T(q'))\) we have some \(\tilde{q}\in M\) that is not independent to \(q'f\). Since \(M\) is independent we cannot have \(\tilde{q}\preceq q'\), as this would mean that \(\tilde{q}\precneq q'e\in M\). On the other hand we cannot have \(q'f\precneq \tilde{q}\) since then \(\tilde{q}\) would have a greater distance from \(p\) then \(d_M\). This means that \(qf=\tilde{q}\in M\), i.e. 
        \[\{q'e\mid e\in o^{-1}(T(q'))\}\subseteq M,\]
    meaning that \(q'\) must be regular. As \(M\) is independent, we can define
        \[\tilde{M}:=(M\setminus\{q'e\mid e\in o^{-1}(T(q'))\})\cup \{q'\},\]
    this set is also independent and covers \(p\) exactly. Furthermore, since we have removed at least one path that has distance \(d_M\) from \(p\) we have \(d_{\tilde{M}}\leq d_M\) and either \(a_{\tilde{M}}< a_M\) or \(d_{\tilde{M}}<d_M\). So by induction and by applying the relation from \Defref{path-monoid-def} we get
        \[p=\sum_{m\in \tilde{M}} m = \sum_{m\in \tilde{M}\setminus\{q'\}} m +q'= \sum_{m\in\tilde{M}\setminus\{q'\}} m +\sum_{e\in o^{-1}(T(q'))}q'e=\sum_{m\in M} m.\]

	Now if \(M\) exactly covers some \(p_0\neq p\) as in the lemma we note that we either have \(M=\{p_1\}\) for some \(p_0\preceq p_1\precneq p\), or \(M\) covers \(p\) exactly as well. Since we have already covered the latter case, in the former case we can write \(p=p_1e_1e_2\dots e_n\) with \(o^{-1}(o(e_i))=\{e_i\}\) for each \(1\leq i\leq n\). We can now see that \(p_1=p_1e_1=p_1e_1e_2=\dots=p_1e_1\dots e_n\) in the path monoid and thus \(p=p_1\) in the path monoid.

    For the converse, we will denote by \(\mathcal{M}_f(G,R)\) to be the free commutative monoid generated by the paths in \(\mathcal{P}(G,R)\). We will write \(x\sim y\), for any \(x,y\in \mathcal{M}_f(G,R)\) if \(y\) can be achieved by applying the defining relation to \(x\) from \defref{path-monoid-def} (or its reverse). To show the lemma it thus suffices to show that if we have a series
        \[p\sim x_1\sim x_2\sim\dots\sim x_n\]
    with \(x_i=\sum_{q\in M_i}k_{q,i} q\) where \(M_i\subseteq \mathcal{P}(G,R)\) and \(k_{q,i}\in \mathbb{Z}_{>0}\), then \(M_n\) covers some \(p_1\) as in the lemma, exactly and \(k_{q,n}=1\) for each \(q\in M_n\). For this we note that if \(x_i\sim x_{i+1}\), then \(M^q_i=M_{i+1}\) or \(M_i=M^q_{i+1}\) for some \(q\) in \(M_i\) or \(M_{i+1}\) respectively. In the first case we note that if \(M_i\) covers some path exactly then \(M_{i+1}=M^q_i\) does as well. In the other case we note that if \(M_i\) covers some \(p_0\) exactly then we have \(p_0\preceq qe\) for any \(e\in o^{-1}(T(p))\), this means that either \(p_0\preceq q\) (in which case \(M_{i+1}\) covers \(p_0\) exactly) or \(p_0=qe\) for an edge \(e\) s.t. \(o^{-1}(T(q))=\{e\}\), in this case \(M_{i+1}=\{q\}\), and we can see that \(\forall q\preceq \tilde{q}\precneq p\) we have \(|o^{-1}(T(\tilde{q}))|=1\) by induction. 
	
	This means that each \(M_i\) exactly covers some \(p_0\) as in lemma. Additionally, if all the  coefficients \(k_{q,i}\) are equal to \(1\), so are all \(k_{q,i+1}\). This shows the lemma.
\end{proof}
This lemma gives us an important corollary.
\begin{cor}
	For any finite independent sets \(M,N\subseteq \mathcal{P}(G,R)\) that both exactly cover some \(p\in\mathcal{P}(G,R)\) we have paths \(p_1,\cdots,p_n,q_1,\dots q_m\in\mathcal{P}(G,R)\) s.t.
		\[((M^{p_1})^{\cdots})^{p_n}=((N^{q_1})^{\cdots})^{q_n}.\]
\end{cor}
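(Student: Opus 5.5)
The plan is to take a common refinement. Expand \(M\) and \(N\) until both become the same set, namely the set of minimal elements of
\[L:=\{r\in\mathcal{P}(G,R)\mid \exists m\in M,\ n\in N,\ m\preceq r,\ n\preceq r\}.\]
Since \(M\) and \(N\) are finite, independent and both exactly cover \(p\), every path extending \(p\) is comparable with some element of \(M\) and with some element of \(N\).

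\emph{Step 1: the set \(K\) of maximal elements of \(M\cup N\).} For \(m\in M\) and \(n\in N\), paths in \(\mathcal{P}(G,R)\) are comparable if and only if one is a prefix of the other. So the meet of \(m\) and \(n\) in the semilattice is either \(\max(m,n)\) (when they are comparable) or they have no common extension. Let
\[K:=\{r\in M\cup N\mid \exists m\in M,\ n\in N \text{ comparable with } m\preceq r,\ n\preceq r,\ r=\max(m,n)\}.\]
I will argue that \(K\) is finite and independent, and that it exactly covers \(p\). Independence: two distinct elements of \(K\) that are comparable would force two distinct comparable elements inside \(M\) or inside \(N\), which is impossible. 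Exact covering is inherited from \(M\) and \(N\): any extension of \(p\) meets some \(m\) and some \(n\), hence meets \(\max(m,n)\).

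\emph{Step 2: reaching \(K\) from \(M\) by expansions.} Each \(m\in M\) lies under exactly those \(k\in K\) with \(m\preceq k\). The set \(K_m=\{k\in K\mid m\preceq k\}\) is finite and independent, and it exactly covers \(m\). This is the key point: any extension of \(m\) is comparable to some element of \(K\), and by independence of \(M\) that element must lie above \(m\) or be \(m\) itself. I then need a local statement: a finite independent set exactly covering a single path \(m\) is obtained from \(\{m\}\) by finitely many expansions. I will prove it by the same induction on \((d,a)\) as in the proof of \lemref{lem1}, run in reverse. That proof shows the maximal-length elements come in full sibling families \(\{q'e\}\), and collapsing such a family is the inverse of expanding \(q'\). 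Doing this for each \(m\in M\) in turn gives \(((M^{p_1})^{\cdots})^{p_n}=K\), and symmetrically from \(N\).

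\emph{Expected obstacle.} The delicate part is the degenerate situation allowed in \lemref{lem1}, where \(M\) exactly covers some \(p_0\preceq p\) through a chain of vertices of out-degree one rather than covering \(p\) itself. The argument above assumes both sets cover the same \(p\), which the corollary grants. I also need that singular vertices cause no trouble. If \(m\) is singular with no outgoing edges, then exact covering forces \(K_m=\{m\}\), so no expansion at a singular path is ever needed. I would check this case carefully in the local lemma, together with the finiteness of \(K\), which holds since \(K\subseteq M\cup N\).
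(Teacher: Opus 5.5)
Your argument is correct, but it takes a different route from the paper.

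\textbf{The paper's route.} The paper's proof is very short. By \lemref{lem1}, both \(M\) and \(N\) sum to \(p\) in the path monoid. It then regards \(\mathcal{M}_p(G,R)\) as the graph monoid of the tree \(\mathcal{P}(G,R)\) and invokes the confluence result \cite[Lemma 4.3]{ara2007}. That result says two free-monoid words that agree in a graph monoid can both be brought to a common word by expanding moves. An expanding move at a vertex \(q\) of \(\mathcal{P}(G,R)\) is exactly \(M\mapsto M^q\).

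\textbf{Your route.} You build the common expansion explicitly as the set \(K\) of minimal common upper bounds of \(M\) and \(N\). This is in fact the coarsest possible common expansion, since every element of any common expansion lies above an element of \(K\). You then:
\begin{itemize}
\item check that \(K\) is finite, independent and exactly covers \(p\);
\item split it as \(K=\bigsqcup_{m\in M}K_m\);
\item reach each \(K_m\) from \(\{m\}\) by running the collapsing induction of \lemref{lem1} backwards.
\end{itemize}
This uses only the tree structure of paths and \(o\)-finiteness. Under \(o\)-finiteness the only singular paths end in sinks, where no expansion is ever needed.

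\textbf{Two small points.}
\begin{itemize}
\item In Step 2, ruling out some \(k\in K\) with \(k\precneq m\) needs more than independence of \(M\). It also needs that every element of \(K\), including those coming from \(N\), lies above some element of \(M\). This holds by your definition of \(K\), so it is a matter of saying it.
\item In Step 1, ``meet'' should be ``join'' (least common extension).
\end{itemize}

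\textbf{What each approach gives.} Yours is self-contained and constructive: it names the common refinement and needs neither the external confluence lemma nor the detour through an equality in the monoid. The paper's is shorter and reuses the graph-monoid machinery it needs anyway in the later lemma producing terminus-maintaining bijections.
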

\begin{proof}
	Due to \lemref{lem1} we have
		\[\sum_{r\in M} q=p=\sum_{s\in N} s\]
	in the path monoid. Now, if we interpret the path monoid as a graph monoid, then due to \cite[Lemma 4.3]{ara2007} the corollary follows.
\end{proof}

To distinguish the paths \(p\) for which the condition in \lemref{lem1} simplifies to ''\(M\) covers \(p\) exactly '', we will refer to paths of the form \(qe\) where \(q\in \mathcal{P}(G)\), \(e\in EG\) are s.t.
	\[T(q)=o(e)\text{ and } |o^{-1}(T(q))|>1,\]
as \textbf{faithful}. Furthermore, we will call graphs \(G\) faithful, if for each \(p\in \mathcal{P}(G)\) there exist a faithful \(q\in \mathcal{P}(G)\) s.t. \(p\preceq q\).

\begin{lemma}\label{lem2}
    For \((G,R)\) an \(o\)-finite rooted graph and any \(x\in \mathcal{M}_p(G,R)\) and any basis \(B\) there exists an independent set \(M\) with \(B\preceq M\) and coefficients \(k_p\in \mathbb{Z}_{>0}\) for each \(p\in M\) s.t.
        \[x=\sum_{p\in M} k_p p\]
\end{lemma}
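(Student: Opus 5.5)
We argue by rewriting a representation of \(x\) step by step until every path lies under \(B\) and the support is independent. Since \(\mathcal{M}_p(G,R)\) is generated by \(\mathcal{P}(G,R)\), write \(x=\sum_{i=1}^n p_i\) as a finite sum of paths, possibly with repetitions. If \(x=0\) take \(M=\emptyset\). Otherwise the proof has two steps: first push every summand below \(B\), then remove dependencies among the summands.

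\emph{Step 1: pushing below \(B\).} Fix a path \(p\in\mathcal{P}(G,R)\). Since \(B\) is finite, independent and exactly covers \(\{\varepsilon_R\}\), either \(B\preceq p\), or \(p\precneq b\) for some \(b\in B\). In the second case, let \(B_p:=\{b\in B\mid p\preceq b\}\). We claim \(B_p\) is a finite independent set exactly covering \(p\); this uses only that \(B\) exactly covers \(\varepsilon_R\) and that no element of \(B\) is a proper prefix of \(p\). Then \lemref{lem1} (the direction showing that an independent set exactly covering \(p\) sums to \(p\)) gives \(p=\sum_{b\in B_p} b\). Each element of \(B_p\) lies under \(B\), so replacing each \(p_i\) not under \(B\) by the corresponding sum gives
\[x=\sum_{j} q_j \quad\text{with } B\preceq q_j \text{ for all } j.\]

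\emph{Step 2: making the support independent.} Let \(N=\{q_j\}\), and let \(m_q\) be the multiplicity of each \(q\in N\). Suppose \(q\precneq q'\) for some \(q,q'\in N\). Choose a finite independent set \(C\) with \(q'\in C\) that exactly covers \(q\), namely the sets obtained by iterated expansions from \(\{q\}\) along the prefix chain from \(q\) to \(q'\). By \lemref{lem1}, \(q=\sum_{c\in C}c\). Every \(c\in C\) still lies under \(B\), since \(q\preceq c\). Substituting this for each copy of \(q\) removes \(q\) from the support.

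To see that this terminates, do the substitution only for \emph{minimal} dependent elements \(q\) of \(N\), and expand \(q\) only along edges toward deeper elements of \(N\). A cleaner global version is to choose one finite independent set \(D\) lying under \(B\) and under every element of \(N\), and to expand every element of \(N\) into its descendants in \(D\). Here \(D\) consists of the leaves of the finite tree spanned by \(N\) together with all one-edge extensions of its internal nodes. Concretely, \(D\) is built by repeatedly expanding, from the minimal elements of \(N\), each path that is a proper prefix of some element of \(N\). This process is finite because \(N\) is finite and \(G\) is \(o\)-finite. Each \(q\in N\) is then exactly covered by \(D_q:=\{d\in D\mid q\preceq d\}\), which is finite and independent, so \(q=\sum_{d\in D_q}d\) by \lemref{lem1}. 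Summing with multiplicities gives
\[x=\sum_{d\in D} k_d\, d,\]
and we take \(M=\{d\in D\mid k_d>0\}\).

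The main obstacle is the possibly singular vertices. If a path \(q\in N\) that is a proper prefix of another element of \(N\) ends in a sink, we cannot expand it, but then no path extends it, so this case does not occur. Vertices with infinitely many outgoing edges are excluded by \(o\)-finiteness. The remaining point to check carefully is the claim in Step 1 that \(B_p\) exactly covers \(p\). Take any path \(r\) extending \(p\). Since \(B\) exactly covers \(\varepsilon_R\), some \(b\in B\) is comparable with an extension of \(r\). That \(b\) cannot be a proper prefix of \(p\): otherwise \(b\precneq p\precneq b'\) for the \(b'\) chosen above, contradicting the independence of \(B\). Hence \(p\preceq b\), so \(b\in B_p\), which is exactly the required covering property.
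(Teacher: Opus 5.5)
Your proof is correct and follows essentially the same strategy as the paper. Both write \(x\) as a sum of paths, refine all summands to a single independent set lying under \(B\) in which each summand is exactly covered by its descendants, apply \lemref{lem1} to each summand, and collect coefficients. The only difference is the choice of refinement: the paper takes the uniform level set \(M_k\) of all paths of length \(k=\max\{|p| \mid p\in N\cup B\}\) together with the singular (i.e.\ sink) paths of length at most \(k\). That set lies under \(B\) and refines every summand automatically, so your Step 1 and the termination bookkeeping in Step 2 are not needed there; in exchange, your adaptive refinement produces a smaller \(M\).
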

\begin{proof}
    We will write
        \[ x= \sum_{p\in N} l_p p\]
    for some not necessarily independent finite set of paths \(N\) and coefficients \(l_p\in \mathbb{Z}_{>0}\). Now if we take
        \[k:= \max\{|p|\mid p\in N\cup B\},\]
    we can define
        \[M_k=\{p\in \mathcal{P}(G,R)\mid |p|=k\}\cup\{p\in \mathcal{P}(G,R)_{\sing}\mid |p|\leq k\}.\]
    This is an independent set that lies under \(B\) s.t. for each \(p\in N\) the subset 
        \[M_{k,p}=\{q\in M_k\mid p\preceq q\}\subseteq M_k\]
    covers \(p\) exactly and is independent. So, using \lemref{lem1} we get
        \[ x=\sum_{p\in N} l_p p= \sum_{p\in N} l_p \big(\sum_{q\in M_{k,p}}q\big)=\sum_{q\in M_k}(\sum_{p\in N,p\preceq q}l_p)q\]
    showing the lemma.
\end{proof}
We can combine the above lemmas to get 

\begin{cor}
	For any faithful path \(p\in\mathcal{P}(G,R)\) and any \(x\in\mathcal{M}_p(G,R)\) s.t. \(p\preceq x\), there exists an independent set \(M\) that lies under \(p\), s.t. 
		\[x=\sum_{q\in M} q\] 
\end{cor}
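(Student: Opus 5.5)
Here \(p\preceq x\) is read in the monoid sense: there is \(a\in\mathcal{M}_p(G,R)\) with \(p=x+a\). The plan is to write \(p\) as a sum over a single independent set lying under \(p\), read off \(x\) as a sub-sum, and use \lemref{lem1} to force all coefficients to be \(1\).

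First, choose a basis \(B\) with \(p\in B\). For instance, take \(k\geq|p|\) and expand the level set \(M_k\) from the proof of \lemref{lem2}, or more simply choose any finite independent set that exactly covers \(\{\varepsilon_R\}\) and contains \(p\); such a set exists by repeatedly expanding \(\{\varepsilon_R\}\) along the prefixes of \(p\). Apply \lemref{lem2} to \(x\) and to \(a\) with the same basis, refining to a common level set as in that proof. This gives one independent set \(M'\) with \(B\preceq M'\) and nonnegative integers \(k_q,l_q\) such that
\[
x=\sum_{q\in M'}k_q q,\qquad a=\sum_{q\in M'}l_q q.
\]
Hence
\[
p=\sum_{q\in M'}(k_q+l_q)q
\]
is a sum over an independent finite set.

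Next, apply the ``only if'' direction of \lemref{lem1} to \(p\) and the set \(M''=\{q\in M'\mid k_q+l_q>0\}\). It gives that every coefficient \(k_q+l_q\) equals \(1\), and that \(M''\) exactly covers some \(p_0\preceq p\) such that every \(p_1\) with \(p_0\preceq p_1\precneq p\) has exactly one outgoing edge. This is where faithfulness enters. Since \(p=q'e\) with \(|o^{-1}(T(q'))|>1\), we have \(p_1=q'\) whenever \(p_0\neq p\), which is a contradiction. So \(p_0=p\) and \(M''\) exactly covers \(p\); in particular every \(q\in M''\) satisfies \(p\preceq q\).

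Finally, set \(M=\{q\in M''\mid k_q=1\}\). Because each \(k_q+l_q=1\), every \(k_q\) lies in \(\{0,1\}\), so \(x=\sum_{q\in M}q\). The set \(M\) is independent as a subset of \(M'\), and it lies under \(p\).

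The main obstacle is the first step. The two expansions from \lemref{lem2} must be refined to a common independent set, and the expression for \(p\) must come out as a formal sum that \lemref{lem1} applies to. In particular, \lemref{lem1} needs the form \(p=\sum k_q q\) with positive coefficients over a finite set, so zero coefficients must be discarded, which is why \(M''\) is used rather than \(M'\). I would handle the common refinement by taking \(k\) to be the maximum length over both expansions and \(B\), exactly as in the proof of \lemref{lem2}.
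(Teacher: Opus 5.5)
Your proof is correct and takes the same route as the paper. You write \(p=x+a\), decompose \(x\) and \(a\) over independent sets via \lemref{lem2}, and then use \lemref{lem1} together with the faithfulness of \(p\) to force every coefficient to be \(1\) and the resulting set to exactly cover \(p\). Your explicit common level set \(M_k\) and your removal of zero coefficients make precise a step the paper glosses over when it asserts that \(M\cup N\) is independent.
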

\begin{proof}
	Since \(p\preceq x\), we have some \(a\in \mathcal{M}_p(G,R)\) s.t. \(p=x+a\). Using \lemref{lem2} we have some independent  sets \(M,N\subseteq \mathcal{P}(G,R)\) s.t.
		\[x=\sum_{m\in M} m,\quad a=\sum_{n\in N} n\]
	and thus
		\[p=x+a=\sum_{m\in M\setminus N} m+2\sum_{q\in M\cap N} q +\sum_{n\in N\setminus M}n.\]
	Using \lemref{lem1} this means that \(M\cap N=\emptyset\) and \(M\cup N\) is an independent set that covers \(p\) exactly, since \(p\) is faithful. This means that we must have \(p\preceq M\)
\end{proof}

The converse of the above corollary is a special case of a more general lemma.

\begin{lemma}
	For an \(o\)-finite rooted graph \((G,R)\) and any finite independent sets of paths \(M\preceq N\subseteq\mathcal{P}(G,R)\) we have
		\[\sum_{m\in M}m\preceq\sum_{n\in N}n.\]
\end{lemma}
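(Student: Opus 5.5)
The plan is to decompose the problem according to the elements of \(M\) and handle each element separately. For each \(m\in M\) I set \(N_m:=\{n\in N\mid m\preceq n\}\). Since \(M\preceq N\), every \(n\in N\) has some \(m\in M\) with \(m\preceq n\).

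This \(m\) is unique. If \(m,m'\preceq n\), then \(m\) and \(m'\) are both prefixes of the same path, so they are comparable. Independence of \(M\) then forces \(m=m'\). Hence \(N=\bigsqcup_{m\in M}N_m\) is a disjoint union, and each \(N_m\) is a finite independent set lying under \(m\).

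It therefore suffices to show, for each \(m\in M\), that there is \(a_m\in\mathcal{M}_p(G,R)\) with
\[m=\sum_{n\in N_m}n+a_m .\]
Summing over \(m\in M\) then gives \(\sum_{m\in M}m=\sum_{n\in N}n+\sum_{m}a_m\), which is exactly \(\sum_{m\in M}m\preceq\sum_{n\in N}n\). If \(N_m=\emptyset\), take \(a_m=m\).

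For fixed \(m\), I build a finite independent set \(K\) with \(N_m\subseteq K\) and \(\sum_{k\in K}k=m\) in \(\mathcal{M}_p(G,R)\). Then \(a_m:=\sum_{k\in K\setminus N_m}k\) works. Start with \(K_0=\{m\}\) and maintain three invariants:
\begin{itemize}
\item \(K_i\) is finite and independent;
\item \(\sum_{k\in K_i}k=m\) in the path monoid;
\item every \(n\in N_m\) has a (necessarily unique) prefix in \(K_i\).
\end{itemize}
While some \(q\in K_i\) is a strict prefix of some \(n\in N_m\), set \(K_{i+1}:=K_i^q\).

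This step preserves the invariants. Since \(q\precneq n\), the vertex \(T(q)\) has an outgoing edge, so by \(o\)-finiteness \(q\) is regular. The defining relation of \defref{path-monoid-def} then gives \(q=\sum_{e\in o^{-1}(T(q))}qe\), so the sum is unchanged. Finiteness and independence are preserved, as noted after the definition of \(M^q\). Any \(n\) that had \(q\) as its prefix now has the prefix \(qe\), where \(e\) is the next edge of \(n\) after \(q\).

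The main point to check is termination and what holds at the end. For termination I use the potential \(\sum_{n\in N_m}(|n|-|q_n|)\), where \(q_n\) is the unique prefix of \(n\) in \(K_i\). This is a non-negative integer, and each expansion strictly decreases it, because \(q_n\) lengthens by one for every \(n\) extending \(q\) and is unchanged otherwise. So the process stops.

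When it stops, no element of \(K\) is a strict prefix of any \(n\in N_m\). By the third invariant, \(q_n=n\) for each \(n\), so \(N_m\subseteq K\). This gives \(m=\sum_{k\in K}k=\sum_{n\in N_m}n+\sum_{k\in K\setminus N_m}k\), as required.
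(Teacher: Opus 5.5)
Your proof is correct and follows essentially the same route as the paper. The paper extends \(N\) to an independent set \(C\) that exactly covers \(M\), splits it as \(C_m=\{c\in C\mid m\preceq c\}\), and applies \lemref{lem1} to each \(m\); your \(K\) for each \(m\) is exactly such a \(C_m\), except that you build it explicitly by expansions and track the sum through the defining relation, where the paper only asserts that \(C\) exists and cites \lemref{lem1}.

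The one step you leave unstated is that the expanded \(q\) cannot itself lie in \(N_m\), so every \(n\) with prefix \(q\) really has a next edge. This follows at once from independence of \(N_m\), since \(q\) is a strict prefix of another element of \(N_m\).
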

\begin{proof}
	Since we have \(M\preceq N\) and \(N\) is independent, there is an independent set of paths \(C\) with \(N\subseteq C\) that covers \(M\) exactly. For each \(m \in M\) we can define
		\[C_m=\{c\in C\mid m\preceq c\}.\]
	Since \(C\) covers \(M\) exactly, \(C_m\) also covers \(m\) exactly and thus, using \lemref{lem1},
		\[\sum_{m\in M} m=\sum_{m\in M}\sum_{c\in C_p} c=\sum_{c\in C} c=\sum_{c\in C\setminus N}c +\sum_{n\in N}n.\]
	This shows that \(\sum_{m\in M}m\preceq\sum_{n\in N}n\).
\end{proof}

So in any \(o\)-finite graph, for any faithful path \(p\in \mathcal{P}(G,R)\) and any \(x\in \mathcal{M}_p(G,R)\) we have \(p\preceq x\) if and only
    \[x=\sum_{m\in M} m\]
for some independent set of paths \(M\), s.t. \(p\preceq M\). Additionally, if we have  \(p\preceq k\cdot x\) for some \(k\in \mathbb{Z}_{>0}, x\in \mathcal{M}_p(G,R)\), then we have either \(k=1\) or \(x=0\). Finally, for any \(x\perp y\in \mathcal{M}_p(G,R)\) if \(p\preceq x, p\preceq y\) we also have \(p\preceq x+y\).

We can connect the path monoid to the graph monoid (as defined in \defref{graph-monoid-defn}), by defining the terminus homomorphism
    \[T_{\mathcal{M}}:\mathcal{M}_p(G,R)\to \mathcal{M}_G\]
to be the unique homomorphism with \(T_{\mathcal{M}}(p)=T(p)\) for each path \(p\in \mathcal{P}(G,R)\).
\begin{lemma}
    For any \(\varepsilon_R\preceq x,y\in \mathcal{M}_p(G,R)\), with \(T_{\mathcal{M}}(x)=T_{\mathcal{M}}(y)\) we have independent sets \(M,N \subseteq \mathcal{P}(G,R)\) s.t.
        \[x=\sum_{p\in M} p,\quad y=\sum_{p\in N} p\]
    and there is a bijection \(\pi:M\to N\) that is terminus maintaining, i.e.
        \[\forall p\in M,\ T(\pi(p))=T(p)\]
\end{lemma}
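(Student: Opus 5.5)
We take \(x=\sum_{p\in M_0}p\) and \(y=\sum_{q\in N_0}q\) for independent sets \(M_0,N_0\) lying under \(\varepsilon_R\). These exist by the corollary following \lemref{lem2} applied to \(\varepsilon_R\) (which we assume faithful, i.e. \(R\) has at least two outgoing edges; otherwise we first pass along the unique edges out of \(R\) using the argument at the end of \lemref{lem1}). Applying \(T_{\mathcal{M}}\) gives
\[\sum_{p\in M_0}T(p)=T_{\mathcal{M}}(x)=T_{\mathcal{M}}(y)=\sum_{q\in N_0}T(q)\]
in the graph monoid \(\mathcal{M}_G\). This is an equality of sums of vertices in a graph monoid, so the refinement/confluence property of graph monoids applies, namely \cite[Lemma 4.3]{ara2007}, the same result used in the corollary to \lemref{lem1}. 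It says that two sums of vertices that are equal in \(\mathcal{M}_G\) can be brought to the same element of the free commutative monoid on \(VG\) by finitely many forward expansions \(v\mapsto\sum_{e\in o^{-1}(v)}t(e)\) with \(v\) regular, applied to each side separately.

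Next we lift these vertex expansions to path expansions. Replacing one occurrence of \(T(p)\) in the formal sum over \(M_0\) by \(\sum_{e\in o^{-1}(T(p))}t(e)\) is exactly the image under \(T\) of passing from \(M_0\) to \(M_0^{p}\). Since \(T(p)\) is regular, \(M_0^p\) is again finite and independent and still lies under \(\varepsilon_R\), and by \lemref{lem1} (or directly from the defining relation) its sum is still \(x\). Inductively, a sequence of expansions on the vertex side yields finite independent sets \(M=((M_0^{p_1})^{\cdots})^{p_k}\) and \(N=((N_0^{q_1})^{\cdots})^{q_l}\) with
\[x=\sum_{p\in M}p,\qquad y=\sum_{q\in N}q,\]
and with the multisets \(\{T(p)\}_{p\in M}\) and \(\{T(q)\}_{q\in N}\) equal in the free commutative monoid on \(VG\). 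Equality of these multisets means that for every vertex \(v\) the numbers \(|\{p\in M: T(p)=v\}|\) and \(|\{q\in N: T(q)=v\}|\) coincide. Choosing a bijection between these fibres for each \(v\) gives the terminus-preserving bijection \(\pi:M\to N\).

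The main obstacle is checking that \cite[Lemma 4.3]{ara2007} applies in exactly this form, with only forward expansions, finitely many steps and no contractions, to sums with multiplicities in the graph monoid of an \(o\)-finite graph. We also need every expansion step on a vertex to be matched by a path \(p\) currently in the set with that terminus, which holds because the multiset of termini is precisely what is being tracked. A lesser point is making sure the initial representations of \(x\) and \(y\) have all coefficients equal to \(1\); this is where the hypothesis \(\varepsilon_R\preceq x,y\) together with the faithfulness reduction is used.
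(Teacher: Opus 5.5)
Your proposal is correct and follows essentially the same route as the paper's proof. Both write $x,y$ as sums over independent sets, apply \cite[Lemma 4.3]{ara2007} to the equal sums of termini in $\mathcal{M}_G$, lift each vertex expansion to an expansion $M\mapsto M^p$, and match the resulting equal multisets of termini by a bijection; your care about coefficients being $1$ covers a point the paper passes over silently, though no faithfulness reduction is needed there, since $\varepsilon_R$ has no proper prefix, so the side condition in \lemref{lem1} is vacuous and the proof of the corollary after \lemref{lem2} applies to $\varepsilon_R$ verbatim.
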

\begin{proof}
    Take independent sets \(M_0,N_0\subseteq \mathcal{P}(G,R)\) s.t. 
        \[x=\sum_{p\in M_0} p,\quad y=\sum_{q\in N_0}q\]
    and note that we have
        \[\sum_{p\in M_0} T(p)=T_{\mathcal{M}}(x)=T_{\mathcal{M}}(y)=\sum_{q\in N_0}T(q)\]
    in the graph monoid. Now by \cite[Lemma 4.3]{ara2007} this means we can apply the defining relation of the graph monoid (in the expanding direction) to make the sums equal in the free monoid. Since applying the relation at \(T(p)\), results in the sum over \(M^p\), we have sets \(M_1,M_2,M_3,\dots, M_k\) with paths \(p_1,p_2,p_3,\dots, p_k\) s.t. \(M_{i-1}^{p_i}=M_{i}\) and sets \(N_1,N_2,N_3,\dots, N_k\) with paths \(q_1,q_2,q_3,\dots, q_k\) s.t. \(N_{i-1}^{q_i}=N_{i}\) that satisfy
        \[\forall 0\leq i\leq k,\ T_{\mathcal{M}}(x)=\sum_{p\in M_i} T(p)\ \text{and}\ T_{\mathcal{M}}(y)=\sum_{p\in N_i} T(p)\]
    and
        \[\sum_{p\in M_k} T(p)\equiv\sum_{p\in N_k} T(p) \]
    in the free monoid \(\mathcal{FM}_G\) generated by \(VG\). Thus, we must have a terminus maintaining bijection \(\pi:M_k\to N_k\), showing the lemma.
\end{proof}
If we define the cylinder based on \(x\in\mathcal{M}_p(G,R)\) to be
    \[\mathcal{M}_p(G,R)_x:=\{y\in \mathcal{M}_p(G,R)\mid x\preceq y\}.\]
We note that if we have a finite independent set of paths \(C\) s.t. \(x=\sum_{p\in C} p\) we can see that \(\mathcal{M}_p(G,R)_x\) is the submonoid of \(\mathcal{M}(G,R)\) generated by the paths \(q\) that lie under \(C\) (\(C\preceq q\)) and we can decompose it as 
    \[\mathcal{M}_p(G,R)_x=\bigoplus_{p\in C}\mathcal{M}_p(G,R)_p.\]    
So if we have a terminus maintaining bijection \(\pi: M\to N\), for \(M,N\) finite and independent, then it can be extended to an isomorphism \(\pi:\mathcal{M}_p(G,R)_{x}\to \mathcal{M}_p(G,R)_{y}\), where \(x:=\sum_{p\in M}p, y:=\sum_{q\in N}q\), that takes \(pr\) to \(\pi(p)r\) for any \(p\in M\) and \(r\in\mathcal{P}(G,T(p))\) (they will share a name for notational convenience). We will call the cylinder isomorphisms that arise this way \textbf{shifts}. Since different bijections of finite independent sets can give rise to the same shifts, we will call these bijections representatives of a shift.  If we introduce the \textbf{elementary shifts} to be the isomorphisms 
    \(\pi_{p,q}:\mathcal{M}_p(G,R)_{p}\to \mathcal{M}_p(G,R)_{q}\)
for any paths \(p,q\) with \(T(p)=T(q)\), that arise in this way from the simple bijection \(\pi:\{p\}\to\{q\}\) taking \(p\) to \(q\), we can write any shift with representative \(\pi:M\to N\) as
    \[\pi=\bigoplus_{p\in M} \pi_{p,\pi(p)}.\]
We note that the shifts form a groupoid that is the rooted version of the pseudogroup defined in \cite{ARKLINT_EILERS_RUIZ_2018}. If we consider their action on the boundary of the path space, shifts form a pseudogroup, so we will refer to the groupoid of shifts as the \textbf{shift pseudogroup} and denote it by \(\mathcal{S}(G,R)\). This will also avoid confusion with the shift groupoid discussed in \cite{Matsumoto_2014}, which is the groupoid of germs of the shift pseudogroup.

Now if we look at a homomorphism between two path monoids \(\Gamma:\mathcal{M}_p(G,R)\to\mathcal {M}_p(H,S)\) we can define the following.
\begin{defn}
    For rooted graphs \((G,R),(H,S)\), let \(\Gamma:\mathcal{M}_p(G,R)\to\mathcal {M}_p(H,S)\) be a monoid homomorphism, we will call it:
    \begin{itemize}
		\item \textbf{root-preserving}, if 
			\[\Gamma(\varepsilon_R)=\varepsilon_S\]
        \item \textbf{terminus-preserving}, if
            \[\forall x,y\in\mathcal{M}_p(G,R),\  T_{\mathcal{M}}(x)=T_{\mathcal{M}}(y)\implies T_{\mathcal{M}}(\Gamma(x))=T_{\mathcal{M}}(\Gamma(y))\]
        \item \textbf{shift-preserving}, if for each shift \(\pi\) in \(\mathcal{M}_p(G,R)\) there exists a shift \(\gamma\) s.t.
            \[\Gamma\circ \pi=\gamma\circ \Gamma.\]
    \end{itemize}
\end{defn}
Note that being shift-preserving implies being terminus-preserving.

Being root-preserving gives us the following
\begin{lemma}\label{weak-indep-lem}
	For any root-preserving homomorphism \(\Gamma:\mathcal{M}_p(G,R)\to\mathcal {M}_p(H,S)\) and any \(\varepsilon_R\preceq x,y\) we have	
		\(x\perp y \implies \Gamma(x)\perp \Gamma(y)\)
\end{lemma}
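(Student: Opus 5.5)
The plan is to reduce everything to the coefficient rigidity of \lemref{lem1}: an empty path, written as a combination of pairwise independent paths, must have all coefficients equal to \(1\). The steps are (i) put \(x\) and \(y\) side by side inside a single decomposition of \(\varepsilon_R\), (ii) apply \(\Gamma\) using root-preservation, and (iii) show that a common summand of \(\Gamma(x)\) and \(\Gamma(y)\) would appear twice in a decomposition of \(\varepsilon_S\).

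\textbf{Step (i).} Since \(\varepsilon_R\preceq x\) and \(\varepsilon_R\preceq y\), we have \(\varepsilon_R=x+a\) and \(\varepsilon_R=y+b\) for some \(a,b\in\mathcal{M}_p(G,R)\). Take \(k\) larger than the lengths of all paths used in some expressions of \(x,a,y,b\) as sums of paths. The proof of \lemref{lem2} then rewrites all four elements over the single independent level set
\[M_k=\{p\in \mathcal{P}(G,R)\mid |p|=k\}\cup\{p\in \mathcal{P}(G,R)_{\sing}\mid |p|\leq k\}.\]
By \lemref{lem1}, \(\varepsilon_R\) equals the sum of the paths in \(M_k\) with every coefficient \(1\). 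Comparing with \(\varepsilon_R=x+a\) and \(\varepsilon_R=y+b\) gives two subsets \(X,Y\subseteq M_k\) with \(x=\sum_{q\in X}q\) and \(y=\sum_{q\in Y}q\).

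I claim \(X\cap Y=\emptyset\). If \(q\in X\cap Y\), then \(x=q+\dots\) and \(y=q+\dots\), so \(x\preceq q\) and \(y\preceq q\). Independence \(x\perp y\) then forces \(q=0\). This is impossible, because \(q\) appears with coefficient \(1\) in a decomposition of \(\varepsilon_R\neq 0\). Hence \(\varepsilon_R=x+y+c\) with \(c=\sum_{q\in M_k\setminus(X\cup Y)}q\).

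\textbf{Step (ii).} Applying \(\Gamma\) and using root-preservation gives
\[\varepsilon_S=\Gamma(x)+\Gamma(y)+\Gamma(c).\]

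\textbf{Step (iii).} Suppose \(\Gamma(x)\preceq z\) and \(\Gamma(y)\preceq z\), so \(\Gamma(x)=z+u\) and \(\Gamma(y)=z+v\). Then
\[\varepsilon_S=2z+u+v+\Gamma(c).\]
Again by \lemref{lem2}, with a common level set \(M'_{k'}\) of \(\mathcal{P}(H,S)\), write \(z,u,v,\Gamma(c)\) with nonnegative coefficients on \(M'_{k'}\). Every path of \(M'_{k'}\) occurring in \(z\) then has coefficient at least \(2\) in the resulting expression of \(\varepsilon_S\). \lemref{lem1} applied to \(p=\varepsilon_S\) forces all coefficients to be \(1\), so no path occurs in \(z\), that is, \(z=0\). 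This gives \(\Gamma(x)\perp\Gamma(y)\).

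The main obstacle is step (i): turning the abstract monoid independence \(x\perp y\) into disjointness of supports inside one decomposition of \(\varepsilon_R\). The worry is that the relation \(\preceq_{\mathcal{M}}\) and decompositions need not be unique. Using the single uniform level set \(M_k\), together with the coefficient-one conclusion of \lemref{lem1} at \(\varepsilon_R\), is what should make the comparison of coefficients legitimate. There is one minor point to check: \(\varepsilon_R\) is covered exactly by \(M_k\), so the hypothesis of \lemref{lem1} with \(p_0=p=\varepsilon_R\) holds.
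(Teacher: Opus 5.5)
Your proof is correct and follows the same route as the paper. The paper's two-line argument is exactly \(x\perp y\Rightarrow \varepsilon_R\preceq x+y\Rightarrow \varepsilon_S\preceq\Gamma(x)+\Gamma(y)\Rightarrow\Gamma(x)\perp\Gamma(y)\), leaning on its earlier remarks that independent summands of \(p\) add to a summand of \(p\) and that \(p\preceq k\cdot z\) forces \(k=1\) or \(z=0\); your steps (i) and (iii) prove those remarks for \(p=\varepsilon_R\) and \(p=\varepsilon_S\) via \lemref{lem1} and the level sets from \lemref{lem2}. One point to tighten is why \(q\neq 0\) in step (i): the real reason is that dropping \(q\) would leave a decomposition of \(\varepsilon_R\) over \(M_k\setminus\{q\}\), which does not cover \(\varepsilon_R\) exactly, and this contradicts \lemref{lem1}.
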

\begin{proof}
	We note that if we have \(x\perp y\) we also have \(\varepsilon_R\preceq x+y\) and thus \(\varepsilon_S\preceq\Gamma(x)+\Gamma(y)\) because of \(\Gamma\) being root-preserving, and thus \(\Gamma(x)\perp \Gamma(y)\).
\end{proof}
Proving the above lemma in the non-root-preserving case is possible, but a bit more involved. Since the path monoid homomorphisms we will look at will be root-preserving, so this version of the lemma will suffice.

Now we will look at non-injective homomorphisms.

	


	
\begin{lemma}\label{non-inj-lem}
	For any root-preserving homomorphism \(\Gamma:\mathcal{M}_p(G,R)\to \mathcal{M}_p(H,S)\) we have
	\begin{multline*}
		\forall \varepsilon_R\preceq x,y\in \mathcal{M}_p(G,R),\ \Gamma(x)=\Gamma(y)\implies\\
		 \exists z,a,b\in \mathcal{M}_p(G,R),\ x=z+a,\ y=z+b,\ \Gamma(a)=\Gamma(b)=0.
	\end{multline*}
\end{lemma}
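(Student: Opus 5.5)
We plan to reduce the statement to a comparison of path representations at a common refinement. First, by \lemref{lem2} applied to \(x\) and \(y\) (and using that \(\varepsilon_R\preceq x,y\) together with the corollary following \lemref{lem2}), we write \(x=\sum_{p\in M}p\) and \(y=\sum_{q\in N}q\) for finite independent sets \(M,N\subseteq\mathcal{P}(G,R)\). Each lies under some basis, and we extend both to bases. Concretely, \(\varepsilon_R=x+a_x\) with \(a_x=\sum_{p\in M'}p\), where \(M\cup M'\) is a basis, and similarly \(N\cup N'\). By the corollary to \lemref{lem1}, the two bases \(M\cup M'\) and \(N\cup N'\) have a common expansion \(B\). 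Expanding does not change the sums, so we may assume \(x=\sum_{p\in B_x}p\) and \(y=\sum_{p\in B_y}p\) for subsets \(B_x,B_y\subseteq B\).

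We then set
\[z:=\sum_{p\in B_x\cap B_y}p,\quad a:=\sum_{p\in B_x\setminus B_y}p,\quad b:=\sum_{p\in B_y\setminus B_x}p,\]
so that \(x=z+a\) and \(y=z+b\). It remains to show \(\Gamma(a)=\Gamma(b)=0\). Since \(B\) is a basis, \(\varepsilon_R=\sum_{p\in B}p\), and the three sets \(B_x\setminus B_y\), \(B_y\setminus B_x\) and \(B_x\cap B_y\) are pairwise disjoint inside \(B\). Hence \(z+a+b\preceq\varepsilon_R\), i.e.\ \(\varepsilon_R=z+a+b+c\) with \(c=\sum_{p\in B\setminus(B_x\cup B_y)}p\). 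Applying \(\Gamma\) and using root-preservation gives \(\varepsilon_S=\Gamma(z)+\Gamma(a)+\Gamma(b)+\Gamma(c)\). We also have \(\Gamma(z)+\Gamma(a)=\Gamma(x)=\Gamma(y)=\Gamma(z)+\Gamma(b)\). Substituting, \(\varepsilon_S=\Gamma(x)+\Gamma(b)+\Gamma(c)=\Gamma(x)+\Gamma(a)+\Gamma(c)\), and therefore
\[\varepsilon_S=\Gamma(x)+\Gamma(b)+\Gamma(c)\quad\text{with}\quad\Gamma(x)=\Gamma(z)+\Gamma(b),\]
so \(\varepsilon_S=\Gamma(z)+2\Gamma(b)+\Gamma(c)\).

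The key step is to use the remark after the lemma on \(M\preceq N\), namely that \(p\preceq k\cdot w\) forces \(k=1\) or \(w=0\), in a slightly generalized form. Write \(\Gamma(b)=\sum_{q\in Q}k_q q\) over an independent set \(Q\) via \lemref{lem2}, and do the same for the other summands, over a common fine independent set (the set \(M_k\) from the proof of \lemref{lem2}). This expresses \(\varepsilon_S\) as \(\sum_q m_q q\) with \(m_q\ge 2k_q\) on \(Q\). By \lemref{lem1}, \(\varepsilon_S\) written over an independent set must have all coefficients equal to \(1\), so \(Q=\emptyset\) and \(\Gamma(b)=0\). Here the root \(\varepsilon_S\) plays the role of \(p\), possibly after passing along a chain of single-edge vertices, which \lemref{lem1} allows. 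Symmetrically, \(\Gamma(a)=0\).

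The main obstacle is exactly this coefficient argument. It needs all four images expressed over one independent set, so that \lemref{lem1}'s uniqueness statement applies to the total sum, and it needs care in the case where \(\varepsilon_S\) is not faithful, where \lemref{lem1} only gives an exact cover of some \(p_0\preceq\varepsilon_S\). We will first try to handle this using the full ``if and only if'' of \lemref{lem1}: any representation of \(\varepsilon_S\) as a positive combination over a finite set forces that set to be independent with unit coefficients. A doubled term would then violate this directly, provided the common refinement keeps the coefficient \(\ge 2\), which it does because refinement distributes coefficients to all extensions.
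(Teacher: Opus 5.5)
Your proof is correct, and its decomposition coincides with the paper's. The paper also writes \(x\) and \(y\) over sets \(M_1,M_2\) with \(M_1\cup M_2\) independent, and takes \(z\) to be the common part and \(a,b\) the two differences. The two routes diverge only in how \(\Gamma(a)=\Gamma(b)=0\) is obtained.

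The paper notes \(a\perp b\) and cancels \(\Gamma(z)\) in \(\Gamma(z)+\Gamma(a)=\Gamma(z)+\Gamma(b)\) to get \(\Gamma(a)=\Gamma(b)\). It then uses \lemref{weak-indep-lem} to get \(\Gamma(a)\perp\Gamma(b)\), so the common value is \(0\). You instead substitute \(\Gamma(y)=\Gamma(x)\) into the image of the whole basis to obtain \(\varepsilon_S=\Gamma(z)+2\Gamma(b)+\Gamma(c)\). You then remove the doubled term with the unit-coefficient half of \lemref{lem1}.

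The paper's version is shorter, but it relies on two unproved steps. It cancels in \(\mathcal{M}_p(H,S)\) without comment; path monoids are in fact cancellative, but that needs an argument, e.g.\ via representations over the sets \(M_k\). It also leans on \lemref{weak-indep-lem}, whose proof asserts without argument that \(x\perp y\) implies \(\varepsilon_R\preceq x+y\). Your route avoids both and is self-contained given \lemref{lem1} and \lemref{lem2}.

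The faithfulness issue you flag does not arise. The ``only if'' direction of \lemref{lem1} forces all coefficients to be \(1\) for every path, including \(\varepsilon_S\); faithfulness only governs which \(p_0\) is exactly covered.

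One citation slip: the corollary after \lemref{lem2} requires a faithful path, and \(\varepsilon_R\) is never faithful. To write \(x\) over an independent set, write \(x\) and its complement over a common \(M_k\) and apply \lemref{lem1} to \(\varepsilon_R\). That is in effect what your ``concretely'' sentence does.
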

\begin{proof}
	We will first assume that \(x\perp y\), in this case we also have \(\Gamma(x)\perp \Gamma(y)\), and thus we must have \(\Gamma(x)=\Gamma(y)=0\).
	
	If these elements are not independent we take two sets \(M_1,M_2\subseteq \mathcal{P}(G,R)\) with \(M_1\cup M_2\) independent, s.t. 
		\[x=\sum_{p\in M_1} p,\quad y=\sum_{q\in M_2} q.\]
	So by setting 
		\[ a:=\sum_{p\in M_1\setminus M_2} p,\ b:=\sum_{q\in M_2\setminus M_1} q \text{ and } z:=\sum_{r\in M_1\cap M_2} r  \]
	we get \(x=a+z\), \(y=b+z\) and \(a\perp b\). Since \(\Gamma(x)=\Gamma(y)\) we must also have \(\Gamma(a)=\Gamma(b)\) and so by the previous argument they are both equal to \(0\). 
\end{proof}
	

Using \lemref{non-inj-lem} we can define the image of any partial automorphism of a path monoid under a root-preserving homomorphism.
\begin{lemma}\label{image-shift-lem}
	For any \(u,w \in \mathcal{M}_p(G,R)_{\varepsilon_R}\), any isomorphism \(\pi:\mathcal{M}_p(G,R)_{u}\to\mathcal{M}_p(G,R)_w\) with
	 \[\forall u\preceq x,\ T_{\mathcal{M}}(\pi(x))=T_{\mathcal{M}}(x)\]
	and any root- and terminus-preserving homomorphism \(\Gamma:\mathcal{M}_p(G,R)\to\mathcal{M}_p(H,S)\), there exists a unique isomorphism
		\[\Gamma(\pi):\Gamma(\mathcal{M}_p(G,R)_{u})\to\Gamma(\mathcal{M}_p(G,R)_w)\]
	s.t.
		\[\forall x\in \mathcal{M}_p(G,R)_{u},\ \Gamma(\pi)(\Gamma(x))=\Gamma(\pi(x))\] 
\end{lemma}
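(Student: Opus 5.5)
The plan is to define \(\Gamma(\pi)\) by the required formula, \(\Gamma(\pi)(\Gamma(x)):=\Gamma(\pi(x))\) for \(x\in\mathcal{M}_p(G,R)_u\). Uniqueness is then immediate, because every element of \(\Gamma(\mathcal{M}_p(G,R)_u)\) has the form \(\Gamma(x)\). The real work is to show that this is well defined, i.e.
\[\Gamma(x)=\Gamma(y)\implies \Gamma(\pi(x))=\Gamma(\pi(y))\quad\text{for all } x,y\in\mathcal{M}_p(G,R)_u.\]
Once that holds, the same argument applied to \(\pi^{-1}\) (which is also terminus-maintaining) gives a well-defined inverse map. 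So \(\Gamma(\pi)\) is a bijection, and it is additive because \(\pi\) and \(\Gamma\) are.

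For well-definedness I will use \lemref{non-inj-lem}. Since \(\varepsilon_R\preceq u\preceq x,y\), I have \(\varepsilon_R\preceq x,y\). From \(\Gamma(x)=\Gamma(y)\) the lemma produces \(z,a,b\) with \(x=z+a\), \(y=z+b\) and \(\Gamma(a)=\Gamma(b)=0\).

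I then need to move this decomposition inside the cylinder \(\mathcal{M}_p(G,R)_u\), so that \(\pi\) can be applied to \(z,a,b\) separately. Write \(u=\sum_{p\in C}p\) for a finite independent \(C\). Using the decomposition \(\mathcal{M}_p(G,R)_u=\bigoplus_{p\in C}\mathcal{M}_p(G,R)_p\) and \lemref{lem2}, I can take the sets \(M_1,M_2\) in the proof of \lemref{non-inj-lem} to lie under \(C\), with \(M_1\cup M_2\) independent. Then
\[z=\sum_{M_1\cap M_2}r,\qquad a=\sum_{M_1\setminus M_2}p,\qquad b=\sum_{M_2\setminus M_1}q.\]
Here \(z\) is not obviously in the cylinder, but \(x=z+a\) is. Adding \(u\) to each piece would change the value of \(\pi\), so that is not the fix. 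The clean way is to redo the construction with a common refinement: choose one independent set \(D\) lying under \(C\) such that \(x=\sum_{D}k_d d\) and \(y=\sum_{D}l_d d\). Then set \(z=\sum\min(k_d,l_d)d\), \(a=x-z\) and \(b=y-z\), where the subtraction is taken coefficientwise in the sums over \(D\). All three are sums of paths under \(C\), so they lie in the submonoid generated by paths under \(C\). Since \(\pi\) is defined on that submonoid (it extends to the direct sum over the paths of \(C\)), \(\pi\) applies to \(z,a,b\) individually.

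The main obstacle is showing that \(\Gamma(a)=0\) implies \(\Gamma(\pi(a))=0\). This is where terminus preservation enters. For each path \(p\) with \(C\preceq p\), I expect \(\pi(p)\) to satisfy \(T_{\mathcal{M}}(\pi(p))=T(p)=T_{\mathcal{M}}(p)\); this follows from the hypothesis on \(\pi\) applied to \(u+p\), together with the cancellation available there. Hence \(T_{\mathcal{M}}(\pi(a))=T_{\mathcal{M}}(a)\), and because \(\Gamma\) is terminus-preserving,
\[T_{\mathcal{M}}(\Gamma(\pi(a)))=T_{\mathcal{M}}(\Gamma(a))=T_{\mathcal{M}}(0)=0.\]
It remains to pass from \(T_{\mathcal{M}}(\Gamma(\pi(a)))=0\) to \(\Gamma(\pi(a))=0\). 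By \lemref{lem2}, \(\Gamma(\pi(a))\) is a positive sum of paths, and \(T_{\mathcal{M}}\) sends each path to a vertex. In a graph monoid a nonempty sum of vertices is nonzero, since the relations only ever replace a regular vertex by a nonempty sum. So \(T_{\mathcal{M}}(w)=0\) forces \(w=0\).

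With \(\Gamma(\pi(a))=\Gamma(\pi(b))=0\) in hand,
\[\Gamma(\pi(x))=\Gamma(\pi(z))+\Gamma(\pi(a))=\Gamma(\pi(z))=\Gamma(\pi(z))+\Gamma(\pi(b))=\Gamma(\pi(y)),\]
which establishes well-definedness.
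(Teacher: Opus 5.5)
Your skeleton matches the paper's. You define \(\Gamma(\pi)\) by the formula, reduce to well-definedness, get \(z,a,b\) from \lemref{non-inj-lem}, and push \(\Gamma(a)=0\) to \(\Gamma(\pi(a))=0\) via terminus preservation. Two of your additions fill in points the paper leaves implicit: the argument that an element with zero terminus must be zero, and the use of \(\pi^{-1}\) to get the inverse.

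However, you have read \(\preceq_{\mathcal{M}}\) backwards. In the paper \(x\preceq_{\mathcal{M}} y\) means \(x=y+a\), so \(\mathcal{M}_p(G,R)_u\) is the set of \emph{summands} of \(u\), not the set of elements containing \(u\). Hence there is nothing to fix about \(z\): from \(u=x+c\) and \(x=z+a\) you get \(u=z+(a+c)=a+(z+c)\), so \(u\preceq z\) and \(u\preceq a\), and likewise \(u\preceq b\). That is the paper's one-line step ``\(u\preceq x\preceq z\)''. Your common-refinement detour, and the appeal to extending \(\pi\) to the submonoid generated by paths under \(C\), are therefore unnecessary. They are harmless, since they essentially reproduce the same \(z,a,b\).

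The same misreading does break one step: your justification of \(T_{\mathcal{M}}(\pi(a))=T_{\mathcal{M}}(a)\). The element \(u+p\) with \(p\neq 0\) is not a summand of \(u\), so it lies outside the domain of \(\pi\) and the hypothesis cannot be applied to it. Even granting \(T_{\mathcal{M}}(u)+T_{\mathcal{M}}(\pi(p))=T_{\mathcal{M}}(u)+T_{\mathcal{M}}(p)\), you could not cancel \(T_{\mathcal{M}}(u)\), because graph monoids are not cancellative in general. For example, in \(\mathcal{M}_{G_2}\) one has \(v+v=v=v+0\) while \(v\neq 0\).

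The repair is immediate and is exactly what the paper does. Since \(a\) itself lies in \(\mathcal{M}_p(G,R)_u\), the hypothesis on \(\pi\) gives \(T_{\mathcal{M}}(\pi(a))=T_{\mathcal{M}}(a)\) directly, with no per-path argument. Terminus preservation of \(\Gamma\) and your nonvanishing argument then finish the proof as in the paper.
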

\begin{proof}
	To prove this lemma we just have to show that
		\[\forall x,y\in \mathcal{M}_p(G,R)_{u},\ (\Gamma(x)=\Gamma(y)\implies \Gamma(\pi(x))=\Gamma(\pi(y))).\]
	For this we note that by \lemref{non-inj-lem}, since \(\varepsilon_R\preceq x,y\) and \(\Gamma(x)=\Gamma(y)\), there exists \(a,b,z\in\mathcal{M}_p(G,R)\) s.t. \(x=z+a\), \(y=z+b\) and \(\Gamma(a)=\Gamma(b)=0\). Since \(u\preceq x\preceq z\), \(u\preceq x\preceq a\) and \(u\preceq y\preceq b\), \(\pi\) is defined on \(a,b,z\). So we can observe
		\[\Gamma(\pi(x))=\Gamma(\pi(z))+\Gamma(\pi(a))\text{ and }\Gamma(\pi(x))=\Gamma(\pi(z))+\Gamma(\pi(b))\]
	meaning that we will just have to show that \(\Gamma(\pi(a))=\Gamma(\pi(b))=0\). Using the assumption that \(\Gamma\) is terminus preserving, we can factor it over \(T_{\mathcal{M}}\), which gives us the graph monoid homomorphism 
		\[\Gamma_T:\mathcal{M}_G\to\mathcal{M}_H\]
	that satisfies 
		\[\forall c\in\mathcal{M}_p(G,R),\ \Gamma_T(T_{\mathcal{M}}(c))=T_{\mathcal{M}}(\Gamma(c)).\]
	So we can calculate,
		\[T_{\mathcal{M}}(\Gamma(\pi(a)))=\Gamma_T(T_{\mathcal{M}}(\pi(a))=\Gamma_T(T_{\mathcal{M}}(a))=T_{\mathcal{M}}(\Gamma(a))=0\]
	which means that \(\Gamma(\pi(a))=0\). Analogously we can show that \(\Gamma(\pi(b))=0\) as well.
\end{proof}
Since this lemma applies is \(\pi\) is a shift, we can rephrase the condition for \(\Gamma\) to be shift-preserving as follows. \(\Gamma:\mathcal{M}_p(G,R)\to\mathcal{M}_p(H,S)\) is shift-preserving if and only if for each shift \(\pi:\mathcal{M}_p(G,R)_x\to\mathcal{M}_p(H,S)_y\) there exists a shift \(\gamma:\mathcal{M}_p(H,S)_{\Gamma(x)}\to\mathcal{M}_p(H,S)_{\Gamma(y)}\) s.t.
	\[\gamma|_{\text{Im}(\Gamma)}=\Gamma(\pi).\]

\begin{remark}\label{shift-homom-rmk}
	If we have a shift-preserving homomorphism \(\Gamma:\mathcal{M}_p(G,R)\to \mathcal{M}_p(H,S)\) we can pick (through a relatively involved process) for each \(p,q\in \mathcal{P}(G,R)\) with \(T(p)=T(q)\) shifts \(\gamma_{p,q}\) s.t.:
	\begin{itemize}
		\item \(\gamma_{p,q}|_{\text{Im}(\Gamma)}=\Gamma(\pi_{p,q})\)
		\item \(\gamma_{p,q}=\bigoplus_{e\in o^{-1}(T(p))}\gamma_{pe,qe}\)
		\item for each \(p_1,p_2,p_3\in\mathcal{P}(G,R)\) with \(T(p_1)=T(p_2)=T(p_3)\) we have
			\[\gamma_{p_1,p_1}=\text{id.},\ \gamma_{p_2,p_1}=\gamma_{p_1,p_2}^{-1}\text{ and }\gamma_{p_1,p_2}\circ\gamma_{p_2,p_3}=\gamma_{p_1,p_3}.\]
	\end{itemize}
	This will allow us to define a homomorphism of pseudogroups \(\Gamma_s:\mathcal{S}(G,R)\to \mathcal{S}(H,S)\) by mapping any shift with representative \(\pi:C_1\to C_2\)	to
		\[\Gamma_s(\pi)=\bigoplus_{p\in C_1} \gamma_{p,\pi(p)}.\]
	If the homomorphism \(\Gamma\) is surjective the choice of \(\gamma_{p,q}\)'s is unique, although \(\Gamma_s\) does not have to be surjective.
\end{remark}

\section{Constructing isomorphisms of Path monoids via gluing diagrams}
\begin{defn}\label{multiset-defn}
	In order to define gluing diagrams we will be using multisets. A multiset \(\mathbf{M}\) with underlying set \(M\) is a set together with a function \(u:\mathbf{M}\to M\) assigning each element their underlying element. We will write elements \(\mathbf{a}\in \mathbf{M}\) bolded and their underlying elements unbolded i.e. \(u(\mathbf{a})=a\). 
	We will call two multisets \(\mathbf{M}_1,\mathbf{M}_2\) with the same underlying set \(M\) and associated functions \(u_1:\mathbf{M}_1\to M, u_2:\mathbf{M}_2\to M\)  equivalent if there exists a bijection \(f:\mathbf{M}_1\to\mathbf{M}_2\) s.t. \(u_1=u_2\circ f\). 
\end{defn}
We will freely switch between equivalent multisets as we will be only concerned by the underlying elements of the members of multisets and not their actual values. For any multiset \(\mathbf{M}\) with underlying set \(M\subseteq N\) we will write \(\mathbf{M}\subseteq_m N\)


We will denote by $\mathcal{FM}_G$ to be the free monoid generated by the vertices of $G$. We will identify the elements $x\in \mathcal{FM}_G$ with the unique (up to equivalence) multiset $\mathbf{M}_x\subseteq_m VG$ s.t. 
	\[x=\sum_{\mathbf{v}\in \mathbf{M}_x}v.\]
This way we can have elements in \(x\) and functions between arbitrary (multi)sets and $x$, and determine when they are injective or surjective. We will also be able to expand \(x\) along any \(\mathbf{v}\in x\) by setting 
	\[x^{\mathbf{v}}:=\sum_{\mathbf{w}\in \mathbf{M}_x\setminus\{\mathbf{v}\}} w +\sum_{e\in o^{-1}(v)} t(e).\]
As we can freely choose the representative for \(\mathbf{M}_{x^\mathbf{v}}\) we may assume \(\mathbf{M}_{x^\mathbf{v}}=\mathbf{M}_x\setminus\{\mathbf{v}\}\sqcup o^{-1}(v)\) where the underlying element of each \(e\in o^{-1}(v)\) is \(t(e)\).
We will denote for any $x\in\mathcal{FM}_G$ the path space rooted at \(x\), as:
	\[\mathcal{P}(G,x):=\{(p,\mathbf{v})\mid p\in\mathcal{P}(G), \mathbf{v}\in\mathbf{M}_x,\ O(p)=v\}.\]
We will write the element \(p_{\mathbf{v}}:=(p,\mathbf{v})\in\mathcal{P}(G,x)\) with \(O(p_{\mathbf{v}})=\mathbf{v}\in x\) and \(T(p_{\mathbf{v}})=T(p)\in VG\). For notational convenience we will simply write \(\varepsilon_{\mathbf{v}}:=(\varepsilon_v)_{\mathbf{v}}\). For any \(x,y\in \mathcal{FM}_G\) and any \(p_{\mathbf{v}}\in \mathcal{P}(G,x)\), \(q_{\mathbf{w}}\in \mathcal{P}(G,y)\) with \(T(p)=w\) (i.e. is the underlying object of \(O(q_{\mathbf{w}})\)), we can define 
	\[p_{\mathbf{v}}q_{\mathbf{w}}=(pq)_{\mathbf{v}}\in\mathcal{P}(G,x).\]
Analogously for \(q\in\mathcal{P}(G,T(p_{\mathbf{v}}))\) we have
	\[p_{\mathbf{v}}q=(pq)_{\mathbf{v}}\in\mathcal{P}(G,x).\]
We can thus define when paths in \(\mathcal{P}(G,x)\) are prefixes of each other
	\[p,q\in\mathcal{P}(G,x),\ p\preceq q \iff \exists r\in\mathcal{P}(G,T(p)),\ q=pr.\]
This allows us to define everything we have defined in \(\mathcal{P}(G,R)\) in \(\mathcal{P}(G,x)\) in an analogous way (including independent sets, bases and sets lying under or covering each other).
We can thus treat elements of \(\mathcal{P}(G,x)\) just as we did normal paths with exception that their origins are members of the multiset \(\mathbf{M}_x\) and not just simple vertices.
If we have a set of paths $M\subseteq \mathcal{P}(G,y)$ for some \(x\in\mathcal{FM}_G\), we call a function $f:M\to x$, terminus-maintaining if $T(p)$ is the underlying element of $f(p)$, for any $p\in M$. If \(M\) is an independent set we can extend this function naturally to \(f:\mathcal{P}(G,y)_M\to \mathcal{P}(G,x)\) that takes \(pr\) to \(r_{f(x)}\) for each \(p\in M\) and \(r\in \mathcal{P}(G,T(p))\) (this is only well-defined since \(M\) is independent). We note that the expansion of \(f\) is bijective if \(f\) is. Additionally, for any \(p\in M\) we can modify \(f\) to a function \(f^p:M^p\to x^{f(p)}\) by setting
	\[f^p|_{M\setminus \{p\}}:=f|_{M\setminus \{p\}}\text{ and }\forall e\in o^{-1}(T(p)),\ f^p(pe)=e\]
where we use the convention that \(\mathbf{M}_{x^{\mathbf{v}}}= \mathbf{M}_x\sqcup o^{-1}(v)\).

We can also take some set of paths \(N\subseteq\mathcal{P}(G,x)\) and define
	\[M\circ_f N=f^{-1}(N)\]
to be \(N\) glued to \(M\) along \(f\).
We note that, in the case of \(f\) being a bijection, if \(N\) is independent then \(M\circ_f N\) is as well and if \(N\) is a basis then \(M\circ_f N\) covers \(M\) exactly.

\begin{lemma}\label{lem1-sec2}
	For any independent set of paths $M$ with a bijection $\gamma: M\to\mathbf{M}_x$ and any basis $B\subseteq\mathcal{P}(G,x)$  we have:
	\[\sum_{p\in M}p=\sum_{q\in M\circ_{\gamma}B}q\]
\end{lemma}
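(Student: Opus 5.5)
The plan is to split the sum over \(M\) according to which origin \(\mathbf{v}\in\mathbf{M}_x\) each path is sent to, and to apply \lemref{lem1} one path at a time. Since \(\gamma\) is a terminus-maintaining bijection, each \(p\in M\) corresponds to exactly one \(\mathbf{v}=\gamma(p)\) with \(T(p)=v\). The basis \(B\subseteq\mathcal{P}(G,x)\) decomposes as
\[B=\bigsqcup_{\mathbf{v}\in\mathbf{M}_x}B_{\mathbf{v}},\qquad B_{\mathbf{v}}:=\{q\in B\mid O(q)=\mathbf{v}\}.\]
Correspondingly, \(M\circ_\gamma B=\gamma^{-1}(B)\) decomposes as the disjoint union over \(p\in M\) of the sets \(pB_{\gamma(p)}:=\{pr\mid r_{\gamma(p)}\in B_{\gamma(p)}\}\). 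Disjointness follows from the independence of \(M\), because the extension of \(\gamma\) to \(\mathcal{P}(G,y)_M\) is well defined and bijective.

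The first step is to show that for each \(\mathbf{v}\), the set \(B_{\mathbf{v}}\) (viewed as ordinary paths in \(\mathcal{P}(G,v)\)) is finite, independent, and exactly covers \(\varepsilon_{\mathbf{v}}\). Finiteness and independence are inherited from \(B\). For exact covering: a path \(r_{\mathbf{v}}\) can only be comparable to elements of \(B\) with origin \(\mathbf{v}\). Hence the exact covering of \(\{\varepsilon_x\}\) by \(B\) (the analogue of covering \(\{\varepsilon_R\}\) in \(\mathcal{P}(G,x)\)) restricts to each \(\mathbf{v}\).

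Next, prepend \(p\). Because prefixing by \(p\) is an order isomorphism from \(\mathcal{P}(G,T(p))\) onto \(\{q\mid p\preceq q\}\), the set \(pB_{\gamma(p)}\) is finite, independent, and exactly covers \(p\) in \(\mathcal{P}(G,R)\). By the easy direction of \lemref{lem1}, which was proved by induction on \(d_M,a_M\) for finite independent sets exactly covering \(p\), we obtain
\[p=\sum_{q\in pB_{\gamma(p)}}q\]
in \(\mathcal{M}_p(G,R)\). Summing over \(p\in M\) and using the disjoint decomposition above gives the claimed identity.

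The main obstacle is the bookkeeping in the first step: making precise that "basis" in \(\mathcal{P}(G,x)\) means exact covering of all the empty paths \(\varepsilon_{\mathbf{v}}\), and that this localizes to each origin. There is also one degenerate case to check: \(B_{\mathbf{v}}=\{\varepsilon_{\mathbf{v}}\}\), where \(pB_{\gamma(p)}=\{p\}\) and the identity is trivial. Empty \(B_{\mathbf{v}}\) cannot occur, since exact covering forces \(\varepsilon_{\mathbf{v}}\) to be comparable with some element of \(B\).
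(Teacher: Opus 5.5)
Your proposal is correct and takes essentially the same route as the paper. The paper observes that \(M\circ_{\gamma}B=\gamma^{-1}(B)\) covers \(M\) exactly, applies \lemref{lem1} to each \(p\in M\) with the set \(\{q\in M\circ_\gamma B\mid p\preceq q\}\) (which is exactly your \(pB_{\gamma(p)}\)), and regroups the double sum; your per-origin decomposition of \(B\) simply spells out the localization of exact covering that the paper leaves implicit.
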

\begin{proof}
	Since \(B\) is a basis \(M\circ_{\gamma}B=\gamma^{-1}(B)\) covers \(M\) exactly. Therefore, by \lemref{lem1} we must have
		\[\sum_{p\in M}p=\sum_{p\in M}\sum_{\substack{q\in M\circ_{\gamma}B\\ p\preceq q}}q=\sum_{q\in M\circ_{\gamma}B} q\]

\end{proof}
\begin{defn}\label{sec2-def1}
	A \textbf{gluing diagram} \(\mathfrak{G}=(x_v,C_{\varepsilon_R},\gamma_{\varepsilon_R},C_e,\gamma_e)_{e\in EG,v\in VG}\), connecting $(G,R)$ and $(H,S)$ consists of the following:
	\begin{itemize}
		\item For each $v\in VG$ some $ x_v\in\mathcal{FM}_H$.		
		\item A \textbf{starting basis} $C_{\varepsilon_R}\subseteq\mathcal{P}(H,S)$ together with a terminus maintaining bijection $\gamma_{\varepsilon_R}:C_{\varepsilon_R}\to x_{R}$.
		\item For each $e\in EG$ we have disjoint independent sets $C_e\subseteq\mathcal{P}(H,x_{t(e)})$ together with terminus maintaining bijections $\gamma_{e}:C_e\to x_{t(e)}$, such that
		    \[B_v:=\bigsqcup_{e\in o^{-1}(v)}C_{e}\]
		is a basis for each \(v\in VG\)
	\end{itemize}
	A structure that consists of everything above, except for the starting basis, will be called a \textbf{floating gluing diagram}. For a visual representation of (floating) gluing diagram see \figref{fig1}. 
\end{defn}
We note that each such a diagram defines a (sometimes incomplete) tiling of the path space.
\begin{figure}
	\begin{tikzpicture}
		\node[big hollow square]     (x) at (-5,0)   {};
		\node[big hollow circle]   (y) at (-2,0)    {};

		\path[-latex]
		(x)     edge[bend left]                         node[above]             {} (y)
		edge[out=135, in=215, looseness=15]     node[left]              {2} (x)
		(y)     edge[bend left]                         node[below]             {}  (x)
		edge[out=45, in=315, looseness=15]      node[right]             {3} (y)
		;
		\node[big solid square]     (z) at (2,0)   {};
		\node[big solid circle]   (w) at (5,0)    {};

		\path[-latex]
		(z)     edge[bend left]                         node[above]             {2} (w)
		edge[out=135, in=215, looseness=15]     node[left]              {2} (z)
		(w)     edge[bend left]                         node[below]             {}  (z)
		edge[out=45, in=315, looseness=15]      node[right]            {2} (w)
		;
	\end{tikzpicture}
	\begin{tikzpicture}
		[edge from parent/.style={draw,-latex},
			level distance=10mm,
			level 1/.style={sibling distance=15mm},
			level 2/.style={sibling distance=5mm,},
			level 3/.style={sibling distance=3mm,},
			label distance=-2mm]
		\node[big hollow square] (r1) at (-4,0) {}
		child {node[big hollow green square] {}}
		child {node[big hollow red square] {}}
		child {node[big hollow blue circle] {}};
		\node[big solid square] (r2) at (4,0) {\textcolor{white}{1}}
		child {node[big solid green square, label={[font=\small,text=black]-10:$1$}] {}}
		child {node[big solid red square, label={[font=\small,text=black]-10:$1$}] {}}
		child {node[big solid circle] {}
				child {node[big solid blue circle, label={[font=\small,text=black]-10:$1$}] {}}
				child {node[big solid blue circle, label={[font=\small,text=black]-10:$2$}] {}}
				child {node[big solid blue square, label={[font=\small,text=black]-10:$3$}] {}}
			}
		child {node[big solid circle] {}
				child {node[big solid blue circle, label={[font=\small,text=black]-10:$4$}] {}}
				child {node[big solid blue circle, label={[font=\small,text=black]-10:$5$}] {}}
				child {node[big solid blue square, label={[font=\small,text=black]-10:$6$}] {}}
			};
		\node[big hollow circle] (r1) at (-4,-3) {}
		child {node[big hollow green circle] {}}
		child {node[big hollow red circle] {}}
		child {node[big hollow blue square] {}}
		child {node[big hollow cyan circle] {}};

		\node[big solid circle] (r2) at (0,-3) {\textcolor{white}{1} }
			child[sibling distance=5mm] {node[big solid green circle, label={[font=\small,text=black]-10:$1$}] {}}
			child[sibling distance=5mm] {node[big solid green circle, label={[font=\small,text=black]-10:$2$}] {}}
			child[sibling distance=5mm] {node[big solid square] {}
					child {node[big solid green circle, label={[font=\small,text=black]-10:$4$}] {}}
					child {node[big solid green circle, label={[font=\small,text=black]-10:$5$}] {}}
					child {node[big solid green square, label={[font=\small,text=black]-10:$6$}] {}}
					child {node[big solid green square, label={[font=\small,text=black]-10:$3$}] {}}
				};

		\node[big solid circle] (r3) at (2,-3) {\textcolor{white}{2} }
			child[sibling distance=5mm] {node[big solid red circle, label={[font=\small,text=black]-10:$1$}] {}}
			child[sibling distance=5mm] {node[big solid red circle, label={[font=\small,text=black]-10:$2$}] {}}
			child[sibling distance=5mm] {node[big solid square] {}
					child {node[big solid red circle, label={[font=\small,text=black]-10:$4$}] {}}
					child {node[big solid red circle, label={[font=\small,text=black]-10:$5$}] {}}
					child {node[big solid red square, label={[font=\small,text=black]-10:$6$}] {}}
					child {node[big solid red square, label={[font=\small,text=black]-10:$3$}] {}}
				};

		\node[big solid blue square, label={[font=\small,text=black]-10:$1$}] (r4) at (3,-3) {\textcolor{white}{3}};

		\node[big solid cyan circle, label={[font=\small,text=black]-10:$1$}] (r5) at (4,-3) {\textcolor{white}{4} };

		\node[big solid cyan circle, label={[font=\small,text=black]-10:$2$}] (r6) at (5,-3) {\textcolor{white}{5} };

		\node[big solid square] (r7) at (6,-3) {\textcolor{white}{6}}
		child[sibling distance=5mm] {node[big solid cyan circle, label={[font=\small,text=black]-10:$4$}] {}}
		child[sibling distance=5mm] {node[big solid cyan circle, label={[font=\small,text=black]-10:$5$}] {}}
		child[sibling distance=5mm] {node[big solid cyan square, label={[font=\small,text=black]-10:$6$}] {}}
		child[sibling distance=5mm] {node[big solid cyan square, label={[font=\small,text=black]-10:$3$}] {}};
	\end{tikzpicture}
	
	\caption{A floating gluing diagram connecting two 2-vertex graphs, the partitions are indicated by colors and the bijections by numbers. We can make it rooted by assigning the squares to be the roots of the graphs and setting the starting basis to be just the empty paths based on the black square.}\label{fig1}
\end{figure}

We will construct a homomorphism from $\mathcal{M}_p(G,R)$ to $\mathcal{M}_p(H,S)$ from a gluing diagram. For this we will recursively assign each path $p\in\mathcal{P}(G,R)$ an independent set of paths $C_p$ with a terminus-maintaining bijection $\gamma_p:C_p\to x_{T(p)}$. For this we first note that the gluing diagram already provides us with $C_{\varepsilon_R}$ and $\gamma_{\epsilon_R}$. If we have $C_p,\gamma_p$ we can define $C_{pe}$ for any $e\in o^{-1}(T(p))$ as
	\[C_{pe}=C_p\circ_{\gamma_p}C_e\]
and defined \(\gamma_{pe}:=\gamma_e\circ \gamma_p|_{C_{pe}}:C_{pe}\to x_{t(e)}\) where we view \(\gamma_p\) as a function from \( \mathcal{P}(G,R)_{C_p}\) to \(\mathcal{P}(G,x_{t(e)})\) as before. This construction also gives us a partition
\[C_p\circ_{\gamma_p}B_{T(p)}=\bigsqcup_{e\in o^{-1}(T(p))}C_{pe}.\]

So if we define the homomorphism $\Gamma:\mathcal{P}(G,R)\to\mathcal{P}(H,S)$ to be the one induced by
\[\forall p\in\mathcal{P}(G,R),\ \Gamma(p):=\sum_{q\in C_p} q,\]
it is well-defined since by \lemref{lem1-sec2}
\[\Gamma(p)=\sum_{q\in C_p} q= \sum_{q\in C_p\circ_{\gamma_p} B_{T(p)}} q=\sum_{e\in o^{-1}(T(p))}(\sum_{q\in C_{pe}} q)= \sum_{e\in o^{-1}(T(p))}\Gamma(pe).\]

\lemref{non-inj-lem} gives us the straightforward result
\begin{prop}
	For any rooted graphs \((G,R),(H,S)\), a function \(\Gamma:\mathcal{M}_p(G,R)\to \mathcal{M}_p(H,S)\) described by a gluing diagram \(\mathfrak{G}=(x_v,C_{\varepsilon_R},\gamma_{\varepsilon_R},C_e,\gamma_e)_{e\in EG,v\in VG}\) is injective if and only if
		\[\forall v\in VG, x_v\neq 0.\]
\end{prop}


In contrast, the surjectivity of $\Gamma$ is not as easily determined. To see when this is the case we will characterize iterated bases tied to gluing diagrams. For this we will essentially repeat the above construction, but instead of starting from the starting basis we will start from \(C_{\varepsilon_v}:=\{\varepsilon_{\mathbf{w}}\mid \mathbf{w}\in x_v\}\subseteq \mathcal{P}(H,x_v)\).

Then we define recursively for each $r\in\mathcal{P}(G, v)$ and any $e\in o^{-1}(v)$
	\[C_{re}:=C_r\circ_{\gamma_r} C_e\]
and the function $\gamma_{re}:C_{re}\to x_{t(e)}$ defined by \(\gamma_{re}=\gamma_{e}\circ\gamma_{r}|_{C_{re}}\). By a straightforward inductive argument we can see that for any \(r,s\in\mathcal{P}(G)\) with \(T(r)=O(s)\) we have
	\[C_{rs}=C_r\circ_{\gamma_r} C_s,\]
which also means that 
	\[p\preceq q\implies C_p\preceq C_q,\ p\perp q \implies  C_p\perp C_q \text{ and } C_p,C_q\neq \emptyset, C_p\perp C_q\implies p\perp q.\]

We can now define an \textbf{iterated basis} to be a set $B\subseteq\mathcal{P}(H,x_v)$ is of the form
  	\[B=\bigsqcup_{r\in B'}C_r,\]
where $B'\subseteq\mathcal{P}(G,v)$ is a basis. We note that for any \(p\in B'\), we have 
	\[B\circ_{\gamma_{p}}B_{T(p)}=\bigsqcup_{r\in (B')^p}C_r\]
and thus any iterated basis can be constructed by repeatedly  gluing \(B_w\)'s to \(\{\varepsilon_{\mathbf{v}}\mid \mathbf{v}\in x_v\}\).

It is somewhat notationally awkward that we have now doubly defined \(C_p\) for some \(p\in\mathcal{P}(G,R)\), first as a subset of \(\mathcal{P}(G,R)\) and later as a subset of \(\mathcal{P}(G,x_R)\). The former set can be achieved by gluing the latter to the starting basis, but they are technically different. To (hopefully) avoid confusion we will take care to always specify the larger space that the \(C_p\)'s we are discussing are subsets of.

As a first step to showing when \(\Gamma\) is surjective we will show when each \(q\in\mathcal{P}(H,S)\) can be covered by a union of \(C_p\)'s. For this we will need the notion of an element in \(x_v\) being blocked.

\begin{defn}
	Take a (floating) gluing diagram \(\mathfrak{G}=(x_v,C_{\varepsilon_R},\gamma_{\varepsilon_R}, C_e,\gamma_e)_{v\in VG,e\in EG}\) connecting \((G,R)\) to \((H,S)\). For any \(\mathbf{u},\mathbf{w}\in\sum_{v\in VG} x_v\) we say that \(\mathbf{u}\) \textbf{blocks} \(\mathbf{w}\) if there exists an \(e\in EG\) s.t. \(\varepsilon_{\mathbf{w}}\in C_e\) and \(\gamma_e(\varepsilon_{\mathbf{w}})= \mathbf{u}\). 
	A tuple of elements of \(\sum_{v\in VG} x_v\), \((\mathbf{u}_1,\mathbf{u}_2, \dots ,\mathbf{u}_n)\) for some \(n\in\mathbb{Z}_{\geq 1}\) is a \textbf{blocking chain} of length \(n\) if
		\[\forall 1\leq i < n,\ \mathbf{u}_{i+1} \text{ blocks } \mathbf{u}_i.\]
	If we have \(\mathbf{u}_1=\mathbf{u}_n\) we  will call it a \textbf{blocking cycle}.
	We will call \(\mathbf{w}\) \textbf{unblocked} if it is not blocked by any element of \(\sum_{v\in VG} x_v\). In other words \(\mathbf{w}\in x_v\) is unblocked if \(\varepsilon_{\mathbf{w}}\notin B_v\).
	
	We will call the gluing diagram \textbf{unblocked} if for each blocking cycle \((\mathbf{u}_1,\mathbf{u}_2, \dots ,\mathbf{u}_n)\) we have \(o^{-1}(u_1)=\emptyset\).
\end{defn}

We note that if \(\mathbf{u}\) blocks \(\mathbf{w}\) they  must represent the same underlying vertex. So in an unblocked gluing diagram only elements with underlying vertices being sinks can be in a blocking cycle. Also note that if a vertex \(v\in VG\) has two loops adjacent to it then there is an element \(\mathbf{w}\in x_v\) that is unblocked. Additionally, if \(\mathbf{u}\in x_v\) is unblocked we must have \(\varepsilon_{\mathbf{u}}\not\in B_v\).

\begin{lemma}\label{unblock-cover-lem}
	For any gluing diagram \(\mathfrak{G}\) connecting \((G,R)\) and \((H,S)\), \(\mathfrak{G}\) is unblocked if and only if any \(p\in\mathcal{P}(H,S)\), there exist a set of independent paths \(D_p\subseteq \mathcal{P}(G,R)\) s.t. \(\bigcup_{q\in D_p} C_q\) covers \(p\). 
\end{lemma}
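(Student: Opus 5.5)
The plan is to translate the covering condition into a statement about how the iterated bases \(C_r\) descend through the path space \(\mathcal{P}(H,S)\), and then to read blocking chains as the obstruction to descent.

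We first reduce to the floating version. Since \(C_{\varepsilon_R}\) is a basis of \(\mathcal{P}(H,S)\) and \(C_p=C_{\varepsilon_R}\circ_{\gamma_{\varepsilon_R}}C_p^{x_R}\), every \(p\in\mathcal{P}(H,S)\) either lies strictly above some element of \(C_{\varepsilon_R}\), in which case \(D_p=\{\varepsilon_R\}\) works, or lies under some \(c\in C_{\varepsilon_R}\). In the latter case \(p\) corresponds via \(\gamma_{\varepsilon_R}\) to a path \(s_{\mathbf{w}}\in\mathcal{P}(H,x_R)\). So the statement reduces to the following: every path \(s_{\mathbf{w}}\) with \(\mathbf{w}\in x_v\) is covered by a union of \(C_r\)'s, \(r\in\mathcal{P}(G,v)\) independent. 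Every vertex is reachable from \(R\), so every \(x_v\) actually occurs this way.

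For the direction "unblocked \(\Rightarrow\) covering" we induct on \(|s|\). Consider the basis \(B_v\) of \(\mathcal{P}(H,x_v)\). Either some \(c\in C_e\subseteq B_v\) satisfies \(c\preceq s_{\mathbf{w}}\) with \(c\) non-empty, or \(\varepsilon_{\mathbf{w}}\in C_e\), or \(s_{\mathbf{w}}\precneq\) elements of \(B_v\).
\begin{itemize}
\item In the third case the set \(\{e\in o^{-1}(v)\}\) with \(C_e\) meeting the cone over \(s_{\mathbf{w}}\) gives the cover, using exact covering of the basis.
\item In the first case \(s_{\mathbf{w}}=c\,s'_{\gamma_e(c)}\) with \(|s'|<|s|\). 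Induction provides a cover \(D\subseteq\mathcal{P}(G,t(e))\) for \(s'_{\gamma_e(c)}\), and \(eD\) works via \(C_{er}=C_e\circ_{\gamma_e}C_r\).
\item In the second case \(\mathbf{w}\) is blocked by \(\mathbf{u}=\gamma_e(\varepsilon_{\mathbf{w}})\in x_{t(e)}\), and the same path \(s\) now sits at \(\mathbf{u}\), with the prefix \(e\) appended in \(G\).
\end{itemize}
So we do a secondary induction along the blocking chain \(\mathbf{w},\mathbf{u},\dots\). Since \(\sum x_v\) is finite, an infinite chain must enter a blocking cycle. By unblockedness, that cycle consists of sink elements, and then \(s\) must be \(\varepsilon\); in that case the cover is given by a single \(C_r\) containing \(\varepsilon_{\mathbf{w}}\). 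Otherwise the chain terminates at an element where case one or three applies. The main bookkeeping is to order by the pair (length of \(s\), length of the remaining acyclic chain). This part is the main obstacle. We must check carefully that when the chain reaches a cycle through a non-empty \(s\), the underlying vertex has outgoing edges (because \(s\) starts there), which contradicts unblockedness. This is exactly why the condition \(o^{-1}(u_1)=\emptyset\) suffices.

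For the converse, suppose there is a blocking cycle \((\mathbf{u}_1,\dots,\mathbf{u}_n=\mathbf{u}_1)\) with \(o^{-1}(u_1)\neq\emptyset\). Pick an edge \(f\in EH\) at \(u_1\) and the path \(f_{\mathbf{u}_1}\), transported to \(\mathcal{P}(H,S)\) through some \(C_r\) hitting \(\mathbf{u}_1\) (reachability). We then show no cover exists. Any \(C_q\) with \(q\) independent meeting the cone of \(f\) must, by following the blocking edges \(e_1,e_2,\dots\) of the cycle, satisfy \(\varepsilon\)-type membership forever. Concretely, each \(C_{re_1\cdots e_k}\) contains the path to \(\mathbf{u}_1\) itself, so no finite \(q\) produces an element of \(C_q\) lying strictly between. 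Hence \(f\) is never covered exactly, giving a contradiction.
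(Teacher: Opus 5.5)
Your proposal is correct and takes essentially the paper's route. In the forward direction you follow the blocking chain of the current element to an unblocked one, and then let \(B_v\) either cover the remaining path or strip a nonempty prefix from it. You package this as one induction on \(|s|\) in the floating spaces \(\mathcal{P}(H,x_v)\) with a three-way split on \(B_v\), whereas the paper first handles \(p=p_0e\) and then inducts on the remaining length. The converse rests on the same persistence \(p:=\gamma_r^{-1}(\varepsilon_{\mathbf{u}_1})\in C_{re_1\cdots e_k}\) for all \(k\).

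When writing up the converse, make the final step explicit, as the paper does. Suppose \(\tilde p\in C_{\tilde q}\) with \(pf\preceq\tilde p\), and pick \(k\) with \(|re_1\cdots e_k|\geq|\tilde q|\). Then \(\tilde q\) is either a prefix of \(re_1\cdots e_k\) or independent of it. In the first case some element of \(C_{\tilde q}\) is a prefix of \(p\), contradicting the independence of \(C_{\tilde q}\). In the second case \(\tilde p\) lies under both \(C_{\tilde q}\) and \(C_{re_1\cdots e_k}\), contradicting \(C_{\tilde q}\perp C_{re_1\cdots e_k}\).
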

\begin{proof}
	First we will show that the ''only if'' implication.

	For this we will first prove the lemma in the case where there exist a \(q_0\in\mathcal{P}(G,R)\), \(p_0\in C_{q_0}\) and \(e\in o^{-1}(T(p_0))\) s.t. \(p=p_0e\). In this case we take \(\mathbf{u}_0:=\gamma_{q_0}(p_0)\) since the underlying vertex of \(\mathbf{u}_0\) is \(T(p_0)\) which cannot be a sink, we have a blocking chain \((\mathbf{u}_0,\mathbf{u}_1,\dots,\mathbf{u}_{n-1})\) with \(\mathbf{u}_n\) unblocked. We will proceed by induction over \(n\), the length of this chain.
	
	In the case where \(n=1\), \(\mathbf{u}_0\) is unblocked and so \(\varepsilon_{\mathbf{u}_0}\not \in B_{T(p_0)}\), which means that \(B_v\) covers \(e_{\mathbf{u}_0}\) and thus
		\[\gamma ^{-1}_{q_0}(B_v)=C_{q_0}\circ_{\gamma_{q_0}} B_v=\bigcup_{d\in o^{-1}(T(q_0))}C_{q_0d}\]
	covers \(\gamma^{-1}_{q_0}(e_{\mathbf{u}_0})=p_0e=p\).

	For the induction step we will assume that \(n>0\) and thus \(\mathbf{u}_1\) blocks \(\mathbf{u}_0\), which means that we have some \(d_0\in o^{-1}(T(q_0))\) s.t. \(\varepsilon_{\mathbf{u}_0}\in C_{d_0}\) and \(\gamma_{d_0}(\varepsilon_{\mathbf{u}_0})=\varepsilon_{\mathbf{u}_1}\) (when viewing it as a function to \(\mathcal{P}(H, x_{t(d_0)})\)). And thus 
		\[p_0=\gamma^{-1}_{q_0}(\varepsilon_{\mathbf{u}_0})=\gamma^{-1}_{q_0}(\gamma^{-1}_{d_0}(\varepsilon_{\mathbf{u}_1}))=\gamma^{-1}_{q_0d_0}(\mathbf{u}_1)\in C_{q_0d_0}.\]
	So we can shift our setting to \(p_0\in C_{q_0d_0}\) and \(\mathbf{u}_1=\gamma_{q_0d_0}(p_0)\). And since the maximal blocking chain \((\mathbf{u}_1,\dots,\mathbf{u}_{n-1})\) starting at \(\mathbf{u}_1=\gamma_{q_0d_0}(p_0)\), has length \(n-1\) by induction the lemma holds.

	Now for the general case we consider some arbitrary \(p\in\mathcal{P}(H,S)\). Firstly, we note that we can assume that \(C_{\varepsilon_R}\preceq p\), since otherwise \(C_{\varepsilon_R}\) would cover \(p\), as it is a basis. We can also assume there exists no \(q\in\mathcal{P}(G,R)\) s.t. \(p\in C_q\), since in this case \(C_q\) would cover \(p\). With these assumptions we can write \(p=p_0er\) for some \(q_0\in \mathcal{P}(G,R)\), \(p_0\in C_{q_0}\), \(e\in o ^{-1}(T(p_0))\) and \(r\in \mathcal{P}(G,t(e))\). We will proceed by induction over \(|r|\). For this we note that by our previous consideration we have an independent set \(D_{p_0e}\) s.t. \(C_0:=\bigcup_{q\in D_{p_0e}} C_q\) covers \(p_0e\). Since \(p_0e\preceq p\) we have some \(C_0\preceq q\) s.t. \(p\preceq q\) and thus some \(p_1\in C_0\) with \(p_1\preceq q\), which means either \(p\preceq p_1\) or \(p_1\precneq p\). In the first case \(\{q\in C_0\mid p\preceq q\}\) is not empty and this combined with \(C_0\) being independent means that \(C_0\) covers \(p\) as well.
	
	If \(p_1\precneq p\), we can write \(p=p_1ds\), for \(d\in o^{-1}(T(p_1))\) and \(s\in\mathcal{P}(G,t(d))\), and take \(q_1\in D_{p_0e}\), s.t. \(p_1\in C_{q_1}\). Since \(C_0\) is independent and covers \(p_0e\), \(p_0e\preceq p\) implies \(p_0e\preceq p_1\) and thus \(p_0\precneq p_1\). This means that we must have \(|s|<|r|\), so by induction we get the covering set from the lemma for \(p\).

	For the ''if'' implication we will take some blocking cycle \((\mathbf{u}_1,\mathbf{u}_2, \dots ,\mathbf{u}_n)\) with \(o ^{-1}(u_1)\neq \emptyset\). Take \(v_1,v_2,\dots, v_n\in VG\), \(e_1,e_2,\dots, e_n\in EG\) s.t. \(\mathbf{u}_i\in x_{v_i}\), \(\varepsilon_{\mathbf{u}_i}\in C_{e_i}\) and \(\gamma_{e_i}(\varepsilon_{\mathbf{u}_i})=\mathbf{u}_{i+1\mod n}\) for each \(1\leq i\leq n\).  We will show that for any \(q\in\mathcal{P}(G,R)\) with \(T(q)=v_i\) and any \(p\in C_q\) with \(\gamma_q(p)=\mathbf{u}_i\) we have \(p\in C_{qe_i}\) and \(\gamma_{qe_i}(p)=\mathbf{u}_{i+1\mod n}\).
	
	This is the case since	
		\[p=\gamma^{-1}_{p}(\varepsilon_{\mathbf{u}_{i}})=\gamma^{-1}_{p}(\gamma^{-1}_{e_i}(\varepsilon_{\mathbf{u}_{i+1\mod n}}))=\gamma^{-1}_{pe_i}(\mathbf{u}_{i+1\mod n})\in C_{qe_i}.\]
	If we apply this result inductively this means that for any \(N>0\) there exists some \(q_N\in\mathcal{P}(G,R)\) s.t. \(p\in C_q\) and \(|q_N|\geq N\).

	Now, if we take some \(d\in o^{-1}(T(p))=o ^{-1}(u_1)\neq \emptyset\), we note that if there would exist \(C_q\)'s that cover \(pd\) as in our lemma we would have some \(\tilde{q}\in \mathcal{P}(G,R)\) with some \(\tilde{p}\in C_{\tilde{q}}\) s.t. \(pd\preceq \tilde{p}\). Now take some \(q\in\mathcal{P}(G,R)\) with \(|q|\geq |\tilde{q}|\) and \(p\in C_q\), since \(q\) is at least as long as \(\tilde{q}\) we have either \(\tilde{q}\preceq q\) or \(\tilde{q}\perp q\). In the first case we would have
		\[\tilde{p}\in C_{\tilde{q}}\preceq C_q\ni p\]
	which is a contradiction since \(p\precneq pd\preceq \tilde{p}\). In the other case we have by \lemref{weak-indep-lem}
		\[\tilde{q}\perp q \implies \Gamma(q)=\sum_{r\in C_q}r\perp \sum_{s\in C_{\tilde{q}}}s=\Gamma(\tilde{q})\]
	(where \(\Gamma\) is the path-monoid homomorphism described by the gluing diagram), which is also a contradiction since we have	
		\[\Gamma(q)=\sum_{r\in C_q}r\preceq p\preceq \tilde{p}\text{ and } \Gamma(\tilde{q})=\sum_{s\in C_{\tilde{q}}}s\preceq \tilde{p}.\]
\end{proof}

To show when an unblocked gluing diagram produces a surjective function we will have to show that each $\mathcal{P}(G,x_v)$ has a \textbf{splitting} iterated basis.
\begin{defn}\label{split-basis}
	For any $x\in\mathcal{FM}_G$ and any $\mathbf{v}\in \mathbf{M}_x$ we call a partitioned basis
	\[B=\bigsqcup_{i\in I} C_i\subseteq \mathcal{P}(G,x) \]
	\(\mathbf{v}\)-\textbf{splitting} if we have a subset $J\subseteq I$ s.t. $\bigsqcup_{j\in J}C_j$ covers \(\varepsilon_{\mathbf{v}}\) exactly.
\end{defn}

We will first show that 
\begin{lemma}\label{splitting-covering-lem}
	If an unblocked gluing diagram has a \(\mathbf{u}\)-splitting iterated basis of $\mathcal{P}(H,x_v)$ for each $v\in VG$ and each \(\mathbf{u}\in x_v\), then for each \(p\in \mathcal{P}(H,S)\) there exist an independent set \(D_p\subseteq \mathcal{P}(G,R)\) s.t.
		\[\bigcup_{q\in D_p} C_q\]
	covers \(p\) exactly.

	Conversely, for each \(v\in VG\) and each \(\mathbf{u}\in x_v\), the gluing diagram has an \(\mathbf{u}\)-splitting iterated basis if there exists a \(q\in \mathcal{P}(G,R)\), with \(T(q)=v\) and a \(p\in C_q\) with \(\gamma_q(p)=\mathbf{u}\), together with an independent set \(D_p\subseteq \mathcal{P}(G,R)\) s.t.
		\[\bigcup_{q\in D_p} C_q\]
	covers \(p\) exactly.
\end{lemma}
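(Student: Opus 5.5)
For the first part I would start from \lemref{unblock-cover-lem}. Since the diagram is unblocked, every \(p\in\mathcal{P}(H,S)\) is covered, though not necessarily exactly, by \(\bigcup_{q\in D'_p}C_q\) for some independent \(D'_p\subseteq\mathcal{P}(G,R)\). The defect is that some path in this union may lie strictly above \(p\), i.e.\ be a proper prefix of it; when no such path exists the union already covers \(p\) exactly and we are done. So the plan is to refine \(D'_p\) until no element of the union is a proper prefix of \(p\).

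If some \(p_1\in C_{q_1}\) with \(q_1\in D'_p\) satisfies \(p_1\precneq p\), put \(v:=T(q_1)\) and \(\mathbf{u}:=\gamma_{q_1}(p_1)\in x_v\). Take the \(\mathbf{u}\)-splitting iterated basis \(B=\bigsqcup_{r\in B'}C_r\subseteq\mathcal{P}(H,x_v)\), with \(J\subseteq B'\) such that \(\bigsqcup_{r\in J}C_r\) covers \(\varepsilon_{\mathbf{u}}\) exactly. Pulling this back along \(\gamma_{q_1}\) and using \(C_{q_1r}=C_{q_1}\circ_{\gamma_{q_1}}C_r\), the set \(\bigcup_{r\in J}C_{q_1r}\) covers \(p_1\) exactly, and \(\{q_1r\mid r\in B'\}\) is an independent set below \(q_1\). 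I would replace \(q_1\) in \(D'_p\) by \(\{q_1r\mid r\in J\}\), and possibly also the remaining \(q_1r\) for the other paths in \(C_{q_1}\). Now \(p_1\) is exactly covered by pieces that all lie on or below \(p_1\), and the offending prefix \(p_1\) has been pushed down. The new pieces may still contain a proper prefix of \(p\), namely some \(p_2\) with \(p_1\preceq p_2\precneq p\). In that case I would iterate, using induction on \(|p|-|p_1|\), the length of the remaining segment.

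The difficulty is that the pieces covering \(\varepsilon_{\mathbf u}\) exactly could include \(\varepsilon_{\mathbf{u}}\) itself, giving \(p_2=p_1\) and no progress. I would rule this out as follows. Since \(B\) is a basis of a multiset with \(\varepsilon_{\mathbf{u}}\) exactly covered by part of it, either \(\varepsilon_{\mathbf u}\in C_r\) for some \(r\in J\), or every piece lies strictly below \(\varepsilon_{\mathbf u}\). In the first case I would instead apply the argument inside the proof of \lemref{unblock-cover-lem}: chase the blocking chain starting at \(\mathbf u\) until an unblocked element is reached, which must happen because the diagram is unblocked and \(T(p_1)\) is not a sink (it has the edge of \(p\) leaving it). After finitely many steps we either reach a \(q\) with the iterated pieces strictly below \(p_1\), or \(p_1\) is exactly covered by itself, which is fine. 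Making this bookkeeping terminate is the main obstacle. The measure I would use is the pair formed by the length of the blocking chain and \(|p|-|p_1|\), ordered lexicographically.

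For the converse, suppose we are given \(q\) with \(T(q)=v\), \(p\in C_q\) with \(\gamma_q(p)=\mathbf u\), and an independent \(D_p\) whose union covers \(p\) exactly. The elements of \(D_p\) whose \(C\)-sets meet the cone below \(p\) must satisfy \(q\preceq d\). Otherwise \(d\perp q\) would give \(C_d\perp C_q\), contradicting that \(C_d\) contains a path below \(p\in C_q\); and \(d\precneq q\) would force \(C_d\) to contain a proper prefix of \(p\), contradicting exactness. Write these elements as \(d=qr_d\), and complete the independent set \(\{r_d\}\subseteq\mathcal{P}(G,v)\) to a basis \(B'\), for example by repeated expansion of \(\{\varepsilon_v\}\) using the corollary to \lemref{lem1}. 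Transporting along \(\gamma_q\), the iterated basis \(\bigsqcup_{r\in B'}C_r\) is \(\mathbf{u}\)-splitting, with \(J=\{r_d\}\).
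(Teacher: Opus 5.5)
Your first part has a genuine gap: it misidentifies the difference between ``covers'' and ``covers exactly''. By the paper's definition, \(M\) covers \(p\) when \(\{m\in M\mid p\preceq m\}\) covers \(p\) exactly. The union \(\bigcup_{q\in D'_p}C_q\) is independent, so it cannot contain a proper prefix of \(p\): if it did, no element of it could extend \(p\), and the empty set does not cover \(p\) exactly. So \lemref{unblock-cover-lem} already excludes the defect that your second and third paragraphs work to remove; the blocking-chain bookkeeping just reproves that lemma. The actual defect is different. A single \(C_{\tilde q}\) may contain extensions of \(p\) together with paths independent of \(p\), and these cannot be discarded by shrinking \(D'_p\). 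Consequently your stopping rule (``no proper prefix of \(p\) in the union, so it covers \(p\) exactly'') is false under either reading of ``covers''. For example, if \(p\in C_q\) and \(C_q\) also contains some \(p'\perp p\), then \(D'_p=\{q\}\) passes your test, yet \(C_q\) does not cover \(p\) exactly. This is exactly where the splitting hypothesis is needed, and your argument never uses it there.

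Your pull-back computation is the right tool, but apply it to the extensions of \(p\), not to its prefixes. For every \(\tilde p\) in the union with \(p\preceq\tilde p\), say \(\tilde p\in C_{\tilde q}\), take the \(\gamma_{\tilde q}(\tilde p)\)-splitting iterated basis with index set \(J_{\tilde p}\). Then \(\bigcup_{r\in J_{\tilde p}}C_{\tilde q r}\) covers \(\tilde p\) exactly. The extensions \(\tilde p\) of \(p\) in the union already cover \(p\) exactly, so the union of these sets over all such \(\tilde p\) covers \(p\) exactly. The paths \(\tilde q r\) with non-empty \(C\)-set form an independent set because the \(\tilde p\) are pairwise independent. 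This is the paper's proof.

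Your converse follows the paper's argument, with one imprecision. \(d\precneq q\) does not force a proper prefix of \(p\) into \(C_d\): \(C_d\) may contain \(p\) itself (when \(q=dr\) and \(\varepsilon_{\gamma_d(p)}\in C_r\), which blocked elements allow), and exactness does not forbid that. The paper handles this by first expanding \(D_p\), which preserves exact covering, until no element of \(D_p\) is a proper prefix of \(q\).
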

\begin{proof}
	For the first part, we will first assume that \(p\in C_q\) for some \(q\in\mathcal{P}(G,R)\). In this case we note that since we have \(\gamma_q(p)\)-splitting basis of \(\mathcal{P}(G,x_{T(q)})\), there exists an independent set \(D_{T(p)}\subseteq \mathcal{P}(G,T(p))\) s.t. \(\bigcup_{r\in D_{T(q)}} C_r\subseteq\mathcal{P}(G,x_{T(p)})\) covers \(\varepsilon_{\gamma_{q}(p)}\) exactly and thus
		\[\gamma^{-1}_q(\bigcup_{r\in D_{T(q)}} C_r)=\bigcup_{r\in D_{T(q)}} C_q\circ_{\gamma_q}C_r=\bigcup_{r\in D_{T(q)}} C_{qr}\]
	covers \(\gamma^{-1}_q(\varepsilon_{\gamma_{q}(p)})=p\) exactly. 

	For a general \(p\in \mathcal{P}(H,S)\), by \lemref{unblock-cover-lem}, we have some \(\tilde{D}_p\subseteq\mathcal{P}(G,R)\) s.t. 
		\[C=\bigcup_{q\in \tilde{D}_p} C_q\]
	covers \(p\), which means that
		\[\bigcup_{\substack{\tilde{p}\in C\\p\preceq\tilde{p}}}\bigcup_{q\in D_{\tilde{p}}}C_q,\]
	with \(D_{\tilde{p}}\) being as in the lemma (which exists since \(\tilde{p}\in C_{\tilde{q}}\), for some \(\tilde{q}\in \mathcal{P}(G,R)\)), covers \(p\) exactly.

	For the second part take \(v\in VG\) and \(\mathbf{u}\in x_v\), s.t. there exists \(q\in\mathcal{P}(G,R)\), with \(T(q)=v\) and some \(p\in C_q\) with \(\gamma_q(p)=\mathbf{u}\) together with some independent \(D_p\subseteq \mathcal{P}(G,R)\) s.t.
		\[\bigcup_{\tilde{q}\in D_p} C_{\tilde{q}}\]
	covers \(p\) exactly. We also assume that \(\forall \tilde{q}\in D_p,\ C_{\tilde{q}}\neq \emptyset\). This means especially that for any \(\tilde{q}\in D_p\), \(C_q\preceq C_{\tilde{q}}\), which means that \(q\) and \(\tilde{q}\) are not independent. Additionally, since any expansion of \(D_p\) also satisfies the above condition, we may assume that for each \(\tilde{q}\in D_q\) we have \(\tilde{q}\not\preceq p\). This means we have to have \(q\preceq D_p\), and so we can write
		\[\bigcup_{\tilde{q}\in D_p} C_{\tilde{q}}=\bigcup_{\substack{r\in \mathcal{P}(G,T(q))\\ qr\in D_p}} C_{qr}=\bigcup_{\substack{r\in \mathcal{P}(G,T(q))\\ qr\in D_p}} C_q\circ_{\gamma_q}C_r=\gamma^{-1}_q(\bigcup_{\substack{r\in \mathcal{P}(G,T(q))\\ qr\in D_p}} C_r)\]
	which means that \(\bigcup_{r\in \tilde{D}_p} C_r\) covers \(\varepsilon_{\gamma_q(p)}=\varepsilon_{\mathbf{u}}\) exactly, where 
		\[\tilde{D}_p:=\{r\in \mathcal{P}(G,T(q))\mid qr\in D_p\}\]
	is an independent set. So for any basis \(\tilde{B}\subseteq\mathcal{P}(G,T(q))=\mathcal{P}(G,v)\) with \(\tilde{D}_p\subseteq \tilde{B}\) the iterated basis 
		\[\bigcup_{r\in \tilde{B}} C_r\subseteq\mathcal{P}(G,x_v)\]
	is 	\(\mathbf{u}\)-splitting.
\end{proof}

Finally, we can use the above lemmas to show when a path-monoid homomorphism described by a gluing diagram is surjective. 

\begin{prop}\label{surj-lem}
	Any unblocked gluing diagram connecting $(G,R)$ to $(H,S)$ has an \(\mathbf{u}\)-splitting iterated basis of $\mathcal{P}(G,x_v)$ for each $v\in VG$ and each \(\mathbf{u}\in x_v\), if and only if induces a surjective terminus preserving homomorphism $\Gamma:\mathcal{M}_p(G,R)\to\mathcal{M}_p(H,S)$. 

	Additionally, if \(H\) is faithful, then if \(\Gamma\) is surjective, the gluing diagram \(\mathfrak{G}\) is unblocked.

\end{prop}
\begin{proof}
	The ''only if'' part of the lemma follows form \lemref{splitting-covering-lem} since for any \(p\in\mathcal{P}(H,S)\) and any independent \(D_p\subseteq \mathcal{P}(G,R)\) s.t. \(\bigcup_{q\in D_p} C_q\) covers \(p\) exactly and thus by \lemref{lem1} we have	
		\[p=\sum_{q\in D_p}\sum_{\tilde{p}\in C_q} \tilde{p}=\sum_{q\in D_p}\Gamma(q)=\Gamma(\sum_{q\in D_p}q)\in\text{Im}(\Gamma).\]
	This means that \(\text{Im}(\Gamma)\) contains all paths, making \(\Gamma\) surjective.

	For the converse, take some $v\in VG$ and some \(\mathbf{u}\in x_v\), since \(R\) is a root we have some \(q\in \mathcal{P}(G,R)\) with \(T(q)=v\) and thus some \(p\in C_q\) with \(\gamma_q(p)=\mathbf{u}\). Since \(\Gamma\) is root-preserving and surjective we have some finite independent \(D_p\subseteq\mathcal{P}(G,R)\) s.t.
		\[p=\Gamma(\sum_{r\in D_p} r)=\sum_{r\in D_p}\sum_{s\in C_r} s.\]
	Since this equality is not changed when expanding \(D_p\), we may assume that \(D_p\not\preceq q\) and thus \(\bigcup_{r\in D_p} C_r\not\preceq C_q\), which implies that \(\bigcup_{r\in D_p} C_r\not\preceq p\) (using the fact that the \(C_{\tilde{q}}\)'s either lie under or are independent of each other). So, by \lemref{lem1}, \(\bigcup_{r\in D_p} C_r\) covers \(p\) exactly, meaning that, by \lemref{splitting-covering-lem}, we have a \(\mathbf{u}\)-splitting iterated basis.

	Now if we assume that \(H\) is faithful, we note that every path in \(\mathcal{P}(H,S)\) is covered by finitely many faithful paths. So we only have to show \lemref{unblock-cover-lem} for faithful paths. If we take some faithful \(p\in\mathcal{P}(H,S)\), we have since  \(\Gamma\) is root-preserving and surjective we have some finite independent \(D_p\subseteq\mathcal{P}(G,R)\) s.t.
		\[p=\Gamma(\sum_{r\in D_p} r)=\sum_{r\in D_p}\sum_{s\in C_r} s.\]
	Which means by \lemref{lem1}, that \(\bigcup_{r\in D_p}C_r\) covers \(p\) (exactly). So by \lemref{unblock-cover-lem}, \(\mathfrak{G}\) is unblocked.

\end{proof}
In the light of this lemma we will be calling unblocked (floating) gluing diagrams that have \(\mathbf{u}\)-splitting iterated bases of \(\mathcal{P}(G,x_v)\) for each \(v\in VG, \mathbf{u}\in x_v\), \textbf{surjective}.

We will now transition to looking at what happens to shifts when a homomorphism described by a gluing diagram is applied to it.

\begin{prop}\label{shift-pres-lem}
	Any \(\Gamma:\mathcal{M}_p(G,R)\to \mathcal{M}_p(H,S)\) constructed via a gluing diagram is shift-preserving. Specifically, for each \(p,q\in\mathcal{P}(G,R)\), with \(T(p)=T(q)\) the shift \(\gamma_{p,q}\) defined by the representative \(\gamma^{-1}_q\circ\gamma_{p}:C_p\to C_q\) satisfies
		\[\Gamma(\pi_{p,q})=\gamma_{p,q}|_{\text{Im}(\Gamma)}.\] 
\end{prop}
\begin{proof}
	We note that since any shift is a sum of elementary shifts it suffices to show the second part of the lemma. For this we will just have to show that $\gamma_{p,q}(C_{pr})=C_{qr}$ for any $r\in \mathcal{P}(G,T(p))$. This is implied by $\gamma_{p,q}|_{C_{pr}}=\gamma_{pr,qr}$, which can be inductively shown by proving that $\gamma_{p,q}|_{C_{pe}}=\gamma_{pe,qe}$ for any $e\in o^{-1}(T(p))$. To show this take some $s\in C_e$ and $r\in C_p$, s.t. $rs=\gamma_p^{-1}(s)\in C_{pe}$, then $\gamma_{p,q}(rs)=\gamma_{p,q}(r)s$, but since $\gamma_{pe}(rs)=\gamma_e(s)=\gamma_{qe}(\gamma_{p,q}(r)s)$ we also must have $\gamma_{pe,qe}(rs)=\gamma_{p,q}(r)s=\gamma_{p,q}(rs)$.
\end{proof}




We note that the \(\gamma_{p,q}\)'s from the above proposition satisfy the conditions from \rmkref{shift-homom-rmk} giving us a canonical homomorphism between the shift pseudogroups.

To show when an isomorphism constructed by a gluing diagram induces an isomorphism of shift-pseudogroups (we'll call such a morphism \textbf{shift-surjective}), we need to first introduce a few concepts.
\begin{defn}\label{internal-defn}
	For any \(x\in \mathcal{FM}_G\) and any basis \(B\subseteq \mathcal{P}(G,x)\), a path \(p\in \mathcal{P}(G,x)\) is \textbf{internal} if \(B\not\preceq p\).
\end{defn}
We note that each internal element \(p\) of a basis \(B\), has some \(q\in B\) with \(p\preceq q\). Since any basis is finite by definition, and any path has only finitely many prefixes, \(B\) only has finitely many internal paths. 
\begin{defn}\label{defn-enable}
	For any (floating) gluing diagram \(\mathfrak{G}\) connecting \((G,R)\) and \((H,S)\), any \(v,w\in VG\) and any \(p\in\mathcal{P}(H,x_v),q\in\mathcal{P}(H,x_w)\) with \(T(p)=T(q)\), we say that \(\mathfrak{G}\) \textbf{enables} the shift from \(p\) to \(q\) if there exist independent sets \(D_p\subseteq \mathcal{P}(G,v), D_q\subseteq \mathcal{P}(G,w)\) s.t.
		\[C_1:=\bigcup_{r\in D_p} C_r\subseteq \mathcal{P}(H,x_v)\text{ and }C_2:=\bigcup_{s\in D_q} C_s\subseteq \mathcal{P}(H,x_w)\]
	exactly cover \(p\) and \(q\) resp., and there exists a terminus-maintaining bijection \(\nu_{p,q}:D_q\to D_q\) s.t.
		\[ \forall r\in\mathcal{P}(H ,T(p))\forall s\in D_p,\ pr\in C_{s}\implies qr\in C_{\nu_{p,q}(q)}\text{ and } \gamma_{s}(pr)=\gamma_{\nu_{p,q}(s)}(qr).\]

	Analogously we can define for any (non-floating) gluing diagram \(\mathfrak{G}\) connecting \((G,R)\) and \((H,S)\), any \(p,q\in\mathcal{P}(H,S),\) with \(T(p)=T(q)\), we say that \(\mathfrak{G}\) \textbf{enables} the shift from \(p\) to \(q\) if there exist independent sets \(D_p, D_q\subseteq \mathcal{P}(G,R)\) s.t.
		\[C_1:=\bigcup_{r\in D_p} C_r\subseteq \mathcal{P}(G,R)\text{ and }C_2:=\bigcup_{s\in D_q} C_s\subseteq \mathcal{P}(G,R)\]
	exactly cover \(p\) and \(q\) resp., such that there exists a terminus-maintaining bijection \(\nu_{p,q}:D_q\to D_q\) s.t.
		\[ \forall r\in\mathcal{P}(H ,T(p))\forall s\in D_p,\ pr\in C_{s}\implies  qr\in C_{\nu_{p,q}(q)}\text{ and }\gamma_{s}(pr)=\gamma_{\nu_{p,q}(s)}(qr).\]

\end{defn}
\begin{remark}\label{exp-enable-rmk}
	We note that the above definition is stable under expansion, i.e. for any \(D_p,D_q,\nu_{p,q}\) that enable a shift from \(p\) to \(q\), then for any regular \(r\in D_p\), the sets \(D^r_p,D^{\nu_{p,q}(r)}_p\) together with the bijection \(\nu^r_{p,q}:D^r_p\to D^{\nu_{p,q}(r)}_p\) also enable the shift.
\end{remark}

We first show that a shift being enabled by a gluing diagram means that it is in the image of the homomorphism that this gluing diagram describes.
\begin{lemma}\label{enable-image-lem}
	For any surjective gluing diagram \(\mathfrak{G}\) describing the homomorphism \(\Gamma:\mathcal{M}_p(G,R)\to \mathcal{M}_p(H,S)\) and any \(p,q\in \mathcal{P}(H,S)\) with \(T(p)=T(q)\), \(\mathfrak{G}\) enables the shift from \(p\) to \(q\) if and only if there exists a shift \(\gamma_{p,q}\) in \(\mathcal{M}_p(G,R)\) s.t. \(\Gamma(\gamma_{p,q})=\pi_{p,q}\). 
\end{lemma}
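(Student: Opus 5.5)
Reading the statement with its evident intent (the notation is swapped in places): a shift \(\pi\) of \(\mathcal{M}_p(G,R)\) should satisfy \(\Gamma(\pi)=\pi_{p,q}\). Here \(\Gamma(\pi)\) is the shift \(\Gamma_s(\pi)\) of \(\mathcal{M}_p(H,S)\) from \rmkref{shift-homom-rmk}, built from the canonical \(\gamma_{r,s}\) of \propref{shift-pres-lem}. Since \(\Gamma\) is surjective, these choices are unique, so \(\Gamma_s(\pi)\) agrees with \(\Gamma(\pi)\) of \lemref{image-shift-lem}. The plan is to translate between the combinatorial data \((D_p,D_q,\nu_{p,q})\) and a representative of \(\pi\).

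For the ``only if'' direction, take \(D_p,D_q,\nu_{p,q}\) as in \defref{defn-enable}, with \(\nu_{p,q}:D_p\to D_q\). The condition \(\gamma_s(pr)=\gamma_{\nu_{p,q}(s)}(qr)\) forces \(T(s)=T(\nu_{p,q}(s))\), because both sides lie in \(x_{T(s)}\) and \(x_{T(\nu(s))}\) respectively. So \(\nu_{p,q}\) is terminus-maintaining and represents a shift \(\pi:=\bigoplus_{s\in D_p}\pi_{s,\nu(s)}\). Then:
\[\Gamma_s(\pi)=\bigoplus_{s\in D_p}\gamma_{s,\nu(s)}.\]
Here \(\gamma_{s,\nu(s)}\) has representative \(\gamma_{\nu(s)}^{-1}\circ\gamma_s:C_s\to C_{\nu(s)}\). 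The enabling condition says exactly that this representative sends \(pr\in C_s\) to \(qr\). Since \(\bigcup_{s}C_s\) exactly covers \(p\), and \(\bigcup_s C_{\nu(s)}\) exactly covers \(q\), the union of these representatives is a representative of \(\pi_{p,q}\), after expanding \(\pi_{p,q}\) along the cover as in \rmkref{exp-enable-rmk}. Hence \(\Gamma_s(\pi)=\pi_{p,q}\).

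For the ``if'' direction, suppose \(\Gamma_s(\pi)=\pi_{p,q}\), with \(\pi\) represented by \(\nu:D\to D'\). Then \(\bigoplus_{s\in D}\gamma_{s,\nu(s)}\) has domain \(\Gamma(\sum_{s\in D}s)=\sum_{s\in D}\sum_{c\in C_s}c\), which must equal \(p\). First I use \remark-style expansion to refine \(D\) until every \(C_s\) is nonempty and no \(C_s\) contains a prefix of \(p\), as in the proof of \propref{surj-lem}. Then \lemref{lem1} gives that \(\bigcup_{s\in D}C_s\) exactly covers \(p\). The same argument on the codomain shows that \(\bigcup C_{\nu(s)}\) exactly covers \(q\). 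Finally, two shifts agree on the cylinder at \(p\), so they agree on each path \(pr\). Hence \(\gamma^{-1}_{\nu(s)}\gamma_s(pr)=qr\), which is exactly the enabling condition with \(D_p=D\), \(D_q=D'\), \(\nu_{p,q}=\nu\).

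The main obstacle is the expansion step in the ``if'' direction. We need equality of shifts, as isomorphisms of cylinders, to be detected on individual paths. We also need to control the degenerate cases: sets \(C_s\) that are empty, and a \(C_s\) that sits strictly above \(p\), which can happen when \(p\) is not faithful. If needed, I would handle this by passing to a faithful extension of \(p\) and using \lemref{lem1} with its chain-of-single-edges clause.
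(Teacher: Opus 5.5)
Your argument has the same structure as the paper's. The ``enabling \(\Rightarrow\) image'' direction is fine: you form \(\bigoplus_{s\in D_p}\pi_{s,\nu_{p,q}(s)}\) and read off its image through the representatives \(\gamma^{-1}_{\nu(s)}\circ\gamma_s\) of \propref{shift-pres-lem} on the exact cover \(\bigcup_s C_s\) of \(p\).

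For the converse you also follow the paper: take a representative \(\nu:D\to D'\), refine, apply \lemref{lem1}, and evaluate on the paths \(pr\). But the refinement step, which you rightly flag, is not justified. The hypothesis that makes it work, namely that \(\mathfrak{G}\) is unblocked, never enters your proof. The degenerate case of \lemref{lem1} is \(\bigcup_{s\in D}C_s=\{p_1\}\), where \(p_1\) is a strict prefix of \(p\) and every edge between \(p_1\) and \(p\) leaves a vertex with a single outgoing edge.
\begin{itemize}
\item Your pointer to \propref{surj-lem} does not transfer. There one expands past a \(q\) with \(p\in C_q\), and such a \(q\) need not exist for an arbitrary \(p\in\mathcal{P}(H,S)\).
\item Passing to a faithful extension of \(p\) does not help either. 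Extending \(p\) does not remove \(p_1\) from the union. Moreover, if \(p=p_1f\) with \(f\) the only edge out of \(T(p_1)\) and \(t(f)\) a sink, then \(p\) has no faithful extension at all.
\end{itemize}
The hypothesis is genuinely needed here. Suppose \(\gamma_s(p_1)\) lay on a blocking cycle; an unblocked diagram forbids this, since \(T(p_1)\) is not a sink. Then every refinement of \(D\) would keep \(p_1\) in some \(C_{s'}\), and no exact cover of \(p\) would ever appear.

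The missing idea is the paper's, and it runs as follows.
\begin{enumerate}
\item By \lemref{unblock-cover-lem}, take finite independent \(M_p,M_q\subseteq\mathcal{P}(G,R)\) with \(\bigcup_{r\in M_p}C_r\) covering \(p\) and \(\bigcup_{r\in M_q}C_r\) covering \(q\).
\item Expand \(D\) and \(D'\) in parallel until no element of \(D\) is a prefix of an element of \(M_p\), and no element of \(D'\) is a prefix of an element of \(M_q\). Further expansions preserve this.
\item Suppose some \(c\in C_s\), \(s\in D\), were a strict prefix of \(p\). Choose \(x\in C_r\), \(r\in M_p\), with \(p\preceq x\). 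Then \(C_s\not\perp C_r\), so \(s\) and \(r\) are comparable. By step 2, \(r\) is then a strict prefix of \(s\), so \(C_r\preceq C_s\) and \(c\) extends some element of \(C_r\). That element also lies below \(x\), so by independence of \(C_r\) it equals \(x\). This gives \(x\preceq c\precneq p\preceq x\), which is absurd.
\item Only now does \lemref{lem1} give the exact covers of \(p\) and \(q\), after which your final step goes through.
\end{enumerate}

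Two minor corrections. First, expansion cannot make an empty \(C_s\) nonempty, since \(C_{se}=C_s\circ_{\gamma_s}C_e=\emptyset\). Such \(s\) are harmless, though, because \(\nu\) is terminus-maintaining and they contribute nothing. Second, you must exclude only strict prefixes of \(p\), since when \(T(p)\) is a sink the cover \(\{p\}\) cannot be refined away.
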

\begin{proof}
	For the first implication take \(D_p, D_q\subseteq \mathcal{P}(G,R)\) and \(\nu_{p,q}:D_p\to D_q\) as in \defref{defn-enable} and set
		\(\gamma_{p,q}:=\bigoplus_{\tilde{p}\in D_p} \pi_{\tilde{p},\nu_{p,q}(\tilde{p})}.\)
	To show that \(\Gamma(\gamma_{p,q})=\pi_{p,q}\), we will first note that since \(\gamma_{p,q}: \mathcal{M}_p(G,R)_x\to \mathcal{M}_p(G,R)_y\) where
		\[x=\sum_ {\tilde{p}\in D_p}\tilde{p}\text{ and }y=\sum_ {\tilde{q}\in D_q}\tilde{q},\]
	and 
		\[\Gamma(x)=\sum_ {\tilde{p}\in D_p}\Gamma(\tilde{p})=\sum_ {\tilde{p}\in D_p}\sum_{r\in C_{\tilde{p}}} r= p\text{ and }\Gamma(y)=\sum_ {\tilde{q}\in D_q}\Gamma(\tilde{q})=\sum_ {\tilde{q}\in D_q}\sum_{r\in C_{\tilde{q}}} r= q\]
	by \defref{defn-enable}, we have \(\Gamma(\gamma_{p,q}):\mathcal{M}_p(H,S)_p\to \mathcal{M}_p(H,S)_q\). Now we will take some \(r\in\mathcal{P}(H,T(p))\) s.t. \(pr\in C_{\tilde{p}}\) for some \(\tilde{p}\in D_{p}\), then by \defref{defn-enable} and \lemref{shift-pres-lem} we have
		\[\Gamma(\gamma_{p,q})(pr)=\Gamma(\pi_{\tilde{p},\nu_{p,q}(\tilde{p})})(pr)=\gamma^{-1}_{\nu_{p,q}(\tilde{p})}(\gamma_{\tilde{p}}(pr))=qr.\]
	This means that \(\Gamma(\gamma_{p,q})\) and \(\pi_{p,q}\) agree on \(\bigcup_{\tilde{p}\in D_p}C_{\tilde{p}}\), which covers \(p\) exactly. So, as they are both shifts with domain \(\mathcal{M}_p(H,S)_p\), they must be equal.

	For the converse, first note that since \(\mathfrak{G}\) is unblocked we have for each \(p\in\mathcal{P}(H,S)\) a finite independent set \(M_p\subseteq \mathcal{P}(G,R)\), s.t. \(\bigcup_{r\in M_p} C_r\) covers \(p\). Now we will assume that  we have a shift \(\gamma_{p,q}\) s.t. \(\Gamma(\gamma_{p,q})=\pi_{p,q}\), we can write it as 
		\[\gamma_{p,q}=\bigoplus_{\tilde{p}\in D_p} \pi_{\tilde{p},\nu_{p,q}(\tilde{p})}\]
	for some independent sets \(D_p,D_q\subseteq\mathcal{P}(G,R)\) and some terminus-maintaining bijection \(\nu_{p,q}:D_p\to D_q\). Since for any \(\tilde{D}_p\) that covers \(D_p\) exactly, \(\tilde{D}_p, \gamma_{p,q}(\tilde{D}_p), \tilde{\nu}_{p,q}:=\gamma_{p,q}|_{\tilde{D}_p}\) satisfy the same property, due to \rmkref{exp-enable-rmk}, we may assume that for any \(r\in M_p\) \(D_p\not\preceq r\) and for any \(s\in M_q\) \(D_q\not\preceq s\). Thus, we have \(\bigcup_{\tilde{p}\in D_p}C_{\tilde{p}}\not \preceq p\) and \(\bigcup_{\tilde{q}\in D_q}C_{\tilde{q}}\not \preceq q\). 
	Since \(\pi_{p,q}\) is the image of \(\gamma_{p,q}\) we must have	
		\[p=\Gamma(\sum_{\tilde{p}\in D_p}\tilde{p})=\sum_{\tilde{p}\in D_p} \sum_{r\in C_{\tilde{p}}} r \text{ and }q=\Gamma(\sum_{\tilde{q}\in D_q}\tilde{q})=\sum_{\tilde{q}\in D_q} \sum_{s\in C_{\tilde{q}}} s.\]
	So due to \lemref{lem1} and our above assumptions about \(D_p\) and \(D_q\), \(\bigcup_{\tilde{p}\in D_p} C_{\tilde{p}}\) and \(\bigcup_{\tilde{q}\in D_q} C_{\tilde{q}}\) exactly cover \(p\) and \(q\) respectively. Additionally, for any \(r\in\mathcal{P}(H,T(p))\) s.t. \(pr\in C_{\tilde{p}}\) for some \(\tilde{p}\in D_{p}\) we have, using \lemref{shift-pres-lem},
		\[qr=\pi_{p,q}(pr)=\Gamma(\gamma_{p,q})(pr)=\gamma^{-1}_{\nu_{p,q}(\tilde{p})}(\gamma_{\tilde{p}}(pr)).\]
	This means that \(qr\in C_{\nu_{p,q}(\tilde{p})}\) and 
		\[\gamma_{\nu_{p,q}(\tilde{p})}(qr)=\gamma_{\tilde{p}}(pr).\]
	So \(\mathfrak{G}\) enables the shift from \(p\) to \(q\).
\end{proof}

We can also connect a shift being enabled between paths in \(\mathcal{P}(H,S)\) with shifts between paths in \(\bigsqcup_{v\in VG}\mathcal{P}(H,x_v)\).
\begin{lemma}\label{shift-translation-lem}
	For any gluing diagram \(\mathfrak{G}\), any \(p_0,q_0\in \mathcal{P}(H,S)\) with \(T(p_0)=T(q_0)\) and some \(p_1,q_1\in \mathcal{P}(G,R)\) s.t. \(C_{p_1}\preceq p_0\) and \(C_{q_1}\preceq q_0\), then \(\mathfrak{G}\) enables the shift from \(p_0\) to \(q_0\), if and only if it enables the shift from \(\gamma_{p_1}(p_0)\) to \(\gamma_{q_1}(q_0)\).
\end{lemma}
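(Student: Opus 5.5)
The plan is to transport the witnessing data of \defref{defn-enable} back and forth along the bijections \(\gamma_{p_1}\) and \(\gamma_{q_1}\), using the gluing identity \(C_{p_1r}=C_{p_1}\circ_{\gamma_{p_1}}C_r\) (with the right-hand \(C_r\subseteq\mathcal{P}(H,x_{T(p_1)})\)) and the compatibility \(\gamma_{p_1r}=\gamma_r\circ\gamma_{p_1}|_{C_{p_1r}}\). Write \(v:=T(p_1)\), \(w:=T(q_1)\), \(p:=\gamma_{p_1}(p_0)\in\mathcal{P}(H,x_v)\) and \(q:=\gamma_{q_1}(q_0)\in\mathcal{P}(H,x_w)\), where \(\gamma_{p_1}\) is extended to \(\mathcal{P}(H,S)_{C_{p_1}}\). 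This extension is a bijection onto \(\mathcal{P}(H,x_v)\), preserves termini, and preserves and reflects \(\preceq\), independence and exact covering. Also \(T(p)=T(p_0)=T(q_0)=T(q)\).

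\emph{From the floating diagram to \(\mathcal{P}(H,S)\).} Suppose \(D_p\subseteq\mathcal{P}(G,v)\), \(D_q\subseteq\mathcal{P}(G,w)\) and \(\nu:D_p\to D_q\) enable the shift from \(p\) to \(q\). Set \(D_{p_0}:=p_1D_p=\{p_1r\mid r\in D_p\}\), \(D_{q_0}:=q_1D_q\), and \(\nu_0(p_1r):=q_1\nu(r)\). These sets are independent, and \(\nu_0\) is terminus-maintaining because \(T(p_1r)=T(r)\). By the gluing identity,
\[\bigcup_{r\in D_p}C_{p_1r}=\gamma_{p_1}^{-1}\Big(\bigcup_{r\in D_p}C_r\Big),\]
which exactly covers \(\gamma_{p_1}^{-1}(p)=p_0\), and similarly for \(q_0\). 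For the compatibility condition, take \(s\in\mathcal{P}(H,T(p_0))\) with \(p_0s\in C_{p_1r}\). Since \(\gamma_{p_1}(p_0s)=ps\), we get \(ps\in C_r\). Hence \(qs\in C_{\nu(r)}\), and so \(q_0s=\gamma_{q_1}^{-1}(qs)\in C_{q_1\nu(r)}\). Moreover
\[\gamma_{p_1r}(p_0s)=\gamma_r(ps)=\gamma_{\nu(r)}(qs)=\gamma_{q_1\nu(r)}(q_0s).\]

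\emph{From \(\mathcal{P}(H,S)\) to the floating diagram.} Here we must first normalise the witnesses \(D_{p_0},D_{q_0}\subseteq\mathcal{P}(G,R)\). I expect this normalisation to be the main obstacle. By \rmkref{exp-enable-rmk} we may expand paired elements simultaneously, and we discard elements with \(C_r=\emptyset\), since they contribute nothing to the covering or to the compatibility condition. Each remaining \(r\in D_{p_0}\) has some \(\tilde p\in C_r\) with \(p_0\preceq\tilde p\), while \(C_{p_1}\preceq p_0\). It follows that \(C_{p_1}\) and \(C_r\) are not independent, so \(r\) and \(p_1\) are comparable, using the stated implication \(C_p,C_q\neq\emptyset,\ C_p\perp C_q\implies p\perp q\).

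If \(r\precneq p_1\), then \(C_r\preceq C_{p_1}\preceq p_0\preceq\tilde p\). Since \(C_r\) is independent, this forces \(\tilde p\) to lie strictly below an element of \(C_r\), a contradiction unless \(C_r\) is a single path equal to a prefix of \(p_0\). That case (an element of \(C_r\) containing \(p_0\)) is removed by expanding \(r\) together with \(\nu_0(r)\) until \(p_1\preceq r\). Termination follows by length, since the exact cover of \(p_0\) forbids elements strictly above \(p_0\) and \(|r|\) grows. After this we have \(p_1\preceq D_{p_0}\), and symmetrically \(q_1\preceq D_{q_0}\); the latter uses the compatibility to expand on both sides simultaneously, which keeps \(\nu_0\) a bijection.

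Once normalised, we set \(D_p:=\{r\mid p_1r\in D_{p_0}\}\), \(D_q\) analogously, and \(\nu(r):=s\) where \(\nu_0(p_1r)=q_1s\). The computations of the first direction then run backwards, again using only the gluing identity and the bijectivity of \(\gamma_{p_1}\) and \(\gamma_{q_1}\).
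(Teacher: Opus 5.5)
Your proposal is correct and takes essentially the paper's route. You transport \(D_p,D_q,\nu\) along \(\gamma_{p_1},\gamma_{q_1}\) via \(C_{p_1r}=C_{p_1}\circ_{\gamma_{p_1}}C_r\), and for the converse you expand (\rmkref{exp-enable-rmk}) and discard empty \(C_r\)'s to get \(p_1\preceq D_{p_0}\), \(q_1\preceq D_{q_0}\) before reading off \(D_p,D_q,\nu\). One small slip: the comparability of \(r\) and \(p_1\) comes from the contrapositive of \(p\perp q\implies C_p\perp C_q\), not from the implication \(C_p,C_q\neq\emptyset,\ C_p\perp C_q\implies p\perp q\) that you cite.
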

\begin{proof}
	Define \(p_2:=\gamma_{p_1}(p_0)\) and \(q_2:=\gamma_{q_1}(q_0)\).

	For the ''if'' implication, we take \(D_{p_2},D_{q_2},\nu_{p_2,q_2}\) as in \defref{defn-enable}. We will show that the sets
	\begin{align*}
		D_{p_0}:&=\{p_1r\mid r\in D_{p_2}\}\\
		D_{q_0}:&=\{q_1r\mid r\in D_{q_2}\}
	\end{align*}
	together with the terminus-maintaining bijection
		\[\nu_{p_0,q_0}:\begin{cases}
							D_{p_0}\to D_{q_0}\\
							p_1r\mapsto q_1\nu_{p_2,q_2}(r)
						\end{cases}\]
	satisfies \defref{defn-enable}. For this we note that 
		\[\bigcup_{p\in D_{p_0}} C_p=\bigcup_{r\in D_{p_2}} C_{p_1r}=\bigcup_{r\in D_{p_2}} C_{p_1}\circ_{\gamma_{p_1}} C_r=C_{p_1}\circ_{\gamma_{p_1}}\bigcup_{r\in D_{p_2}} C_r=\gamma^{-1}_p(\bigcup_{r\in D_{p_2}} C_r)\]
	covers  \(p_0=\gamma ^{-1}_{p_1}(p_2)\) exactly. With an analogous consideration we can show that \(\bigcup_{q\in D_{q_0}} C_{q}\) covers \(q_0\) exactly. Now we can write any \(p\in D_{p_0}\) as \(p=p_1r\) for some unique \(r\in D_{p_2}\), and thus we have for each \(s\in \mathcal{P}(H,T(p_0))\)
	\begin{multline*}
		p_0s\in C_{p_1r}=C_{p_1}\circ_{\gamma_{p_1}}C_r\iff p_2s=\gamma_{p_1}(p_0)s=\gamma_{p_1}(p_0s)\in C_r\\
		\iff q_2s\in C_{\nu_{p_2,q_2}(r)} \iff q_0s\in  C_{q_1\nu_{p_2,q_2}(r)}=C_{\nu_{p_0,q_0}(p_1r)} 
	\end{multline*}
	additionally, we also have	
		\[\gamma_{p}(p_0s)=\gamma_{p_1r}(p_0s)=\gamma_r(\gamma_{p_1}(p_0s))=\gamma_r(p_2s)=\gamma_{\nu_{p_2,q_2}(r)}(q_2s)=\gamma_{\nu_{p_2,q_2}(r)}(\gamma_{q_1}(q_0s))=\gamma_{\nu_{p_0,q_0}(p)}(p_1r).\]
	This shows that \(D_{p_0},D_{q_0},\nu_{p_0,q_0}\) satisfy the conditions of \defref{defn-enable}.

	For the ''only if'' implication we take \(D_{p_0},D_{q_0},\nu_{p_0,q_0}\) as in \defref{defn-enable}. From \rmkref{exp-enable-rmk}, we see that we may expand \(D_{p_0}\) and \(D_{p_0}\). Thus, we may assume that \(D_{p_0}\not\preceq p_1\) and \(D_{q_0}\not\preceq q_1\). We may then also assume that \(\forall p\in D_{p_0},\ C_p\neq \emptyset\) and \(\forall q\in D_{q_0},\ C_q\neq \emptyset\). Now to show that \(p_1\preceq D_{p_0}\) and \(q_1\preceq D_{q_0}\), we note that if some \(p\in D_{p_0}\) satisfies \(p_1\not\preceq p\) we must then have \(p_1\perp p\), since by assumption \(p\not\preceq p_1\), and thus \(C_{p_1}\perp C_p\) which contradicts 
		\[C_{p_1}\preceq p_0\preceq \bigcup_{r\in D_{p_0}}C_r\preceq C_p\] 
	since \(C_p\neq \emptyset\). Analogously, we can show that \(q_1\preceq D_{q_0}\).


	After  this we will proceed analogously as above by showing that the sets
		\begin{align*}
			D_{p_2}:&=\{r\in\mathcal{P}(G,T(p_1))\mid p_1r\in D_{p_0} \}\\
			D_{q_2}:&=\{r\in\mathcal{P}(G,T(q_1))\mid q_1r\in D_{q_0} \}
		\end{align*}
	together with the function \(\nu_{p_2,q_2}:D_{p_2}\to D_{q_2}\), uniquely determined by \(q_1\nu_{p_2,q_2}(r)=\nu_{p_0,q_0}(p_1r)\), satisfy the conditions from \defref{defn-enable}. We note that since 
		\[\gamma_{p_1}^{-1}(\bigcup_{r\in D_{p_2}} C_r)=C_{p_1}\circ_{\gamma_{p_1}}\bigcup_{r\in D_{p_2}} C_r=\bigcup_{r\in D_{p_2}} C_{p_1r}=\bigcup_{p\in D_{p_0}} C_p\]
	covers \(p_0\) exactly, \(\bigcup_{r\in D_{p_2}} C_r\) covers \(\gamma_{p_1}(p_0)=p_2\) exactly. Analogously we can show that \(\bigcup_{r\in D_{q_2}} C_r\) covers \(\gamma_{q_1}(q_0)=q_2\) exactly. For any \(r\in D_{p_2}\), and any \(s\in\mathcal{P}(H,T(p_2))\) we have	
		\[p_2s=\gamma_{p_1}(p_0s)\in C_{r}\iff p_0s\in C_{p_1r}\iff q_0s\in C_{\nu_{p_0,q_0}(p_1r)}\iff q_2s\in C_{\nu_{p_2,q_2}(r)}\]
	and
		\[\gamma_{r}(p_2s)=\gamma_{r}(\gamma_{p_1}(p_0s))=\gamma_{p_1r}(p_0s)=\gamma_{\nu_{p_0,q_0}(p_1r)}(q_0s)=\gamma_{\nu_{p_2,q_2}(r)}(q_2s).\]
	Showing that the conditions from \defref{defn-enable} are satisfied. 


\end{proof}

Using these results we can formulate a condition when an isomorphism described by a gluing diagram, is shift-surjective.
\begin{theorem}\label{shift-surj-thm}
	For any surjective gluing diagram \(\mathfrak{G}\), connecting \((G,R)\) to \((H,S)\) the path-monoid homomorphism \(\Gamma\) it describes is shift surjective if and only if for any \(v,w\in VG\), any \(p\in\mathcal{P}(H,x_v)\) and \(q\in\mathcal{P}(H,x_w)\) with \(T(p)=T(q)\) that are internal in \(B_v\) and \(B_w\) resp.,  \(\mathfrak{G}\) enables the shift from \(p\) to \(q\).
\end{theorem}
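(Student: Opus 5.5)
Since \(\Gamma\) is surjective, the homomorphism \(\Gamma_s:\mathcal{S}(G,R)\to\mathcal{S}(H,S)\) from \rmkref{shift-homom-rmk} (built from the \(\gamma_{p,q}\) of \propref{shift-pres-lem}) is canonical. Every shift of \(\mathcal{M}_p(H,S)\) is a direct sum of elementary shifts \(\pi_{p,q}\), and the image of \(\Gamma_s\) is closed under direct sums of shifts with independent domains and codomains. So shift-surjectivity is equivalent to every elementary shift \(\pi_{p,q}\) with \(p,q\in\mathcal{P}(H,S)\), \(T(p)=T(q)\), lying in the image. By \lemref{enable-image-lem} this is in turn equivalent to \(\mathfrak{G}\) enabling the shift from \(p\) to \(q\) for all such pairs. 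The theorem therefore reduces to a purely combinatorial statement: all shifts between paths of \(\mathcal{P}(H,S)\) are enabled if and only if all shifts between internal paths of the bases \(B_v,B_w\) are enabled.

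For the ``if'' direction, take \(p,q\in\mathcal{P}(H,S)\) with \(T(p)=T(q)\). By surjectivity and \lemref{splitting-covering-lem} we get exact covers \(\bigcup_{r\in D_p}C_r\) of \(p\) and \(\bigcup_{s\in D_q}C_s\) of \(q\). If \(p\) lies in some \(C_{p_1}\) and \(q\) in some \(C_{q_1}\), both shifts are trivially enabled. The main case is when \(p\) is strictly inside the tiling: then there is a longest \(p_1\) with \(C_{p_1}\preceq p\) and \(p\notin C_{p_1}\), so \(\gamma_{p_1}(p)\) is internal in \(B_{T(p_1)}\) (it is covered by \(C_{p_1e}\) for no single deeper \(e\) lying above it). Choose \(q_1\) the same way for \(q\). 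If \(p\) sits above the starting basis but not under it, handle this by first expanding with \lemref{shift-translation-lem}, or by treating \(C_{\varepsilon_R}\) directly. The hypothesis gives enabling of the shift from \(\gamma_{p_1}(p)\) to \(\gamma_{q_1}(q)\), and \lemref{shift-translation-lem} transports it back to enabling from \(p\) to \(q\). There are also mixed cases, where \(p\) lies under a tile but \(q\) is internal. I would reduce these by expanding \(p\) along the tiling: if \(p\) is itself a tile element \(p\in C_{p_1}\), expand to \(pe\) for each edge \(e\). Enabling is stable under expansion by \rmkref{exp-enable-rmk}, and componentwise enablings can be assembled by taking disjoint unions of the \(D\)'s and \(\nu\)'s.

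For the ``only if'' direction, take internal \(p\in\mathcal{P}(H,x_v)\) and \(q\in\mathcal{P}(H,x_w)\). Since \(R\) is a root, pick \(p_1,q_1\in\mathcal{P}(G,R)\) with \(T(p_1)=v\) and \(T(q_1)=w\). Then \(p_0:=\gamma_{p_1}^{-1}(p)\) and \(q_0:=\gamma_{q_1}^{-1}(q)\) are paths in \(\mathcal{P}(H,S)\) with \(C_{p_1}\preceq p_0\) and \(C_{q_1}\preceq q_0\). This requires \(x_v\neq0\), which follows from \(p\) existing. The shift \(\pi_{p_0,q_0}\) is in the image of \(\Gamma_s\), hence enabled by \lemref{enable-image-lem}, and \lemref{shift-translation-lem} gives enabling from \(p\) to \(q\). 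This direction does not even need internality.

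The main obstacle is the bookkeeping in the ``if'' direction. One must show that every path that is not itself a tile reduces, via a unique maximal \(p_1\), to an internal path of a single \(B_v\), using the independence and nesting facts for the \(C_r\). One must also handle the boundary cases: paths that are tile elements, paths not under \(C_{\varepsilon_R}\), and tiles with \(C_r=\emptyset\). My approach is to reduce every pair to one of two kinds, either both internal after translation or both tiles with matching terminus vertices, by splitting into independent pieces and reassembling enablings.
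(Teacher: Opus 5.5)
Your reduction to elementary shifts via \lemref{enable-image-lem} matches the paper. So do the maximal-\(p_1\) argument with \lemref{shift-translation-lem} for paths that are not tile elements, and your ``only if'' direction. The gap is the treatment of tile elements, i.e.\ paths with \(p\in C_{p_1}\).

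Your claim that the shift is trivially enabled when \(p\in C_{p_1}\) and \(q\in C_{q_1}\) is false. By \lemref{shift-translation-lem} it is equivalent to enabling the shift from \(\varepsilon_{\mathbf{u}}\) to \(\varepsilon_{\mathbf{w}}\), where \(\mathbf{u}=\gamma_{p_1}(p)\) and \(\mathbf{w}=\gamma_{q_1}(q)\). This is a genuine condition unless \(\mathbf{u}\) and \(\mathbf{w}\) are the same element of the same \(x_v\); neither \(T(p)=T(q)\) nor even \(p_1=q_1\) suffices.

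Concretely, take \(G_{1,2}\to G_{2,2}\) with:
\(C_{\varepsilon_R}=\{\varepsilon_R\}\), \(x_v=\mathbf{u}_1+\mathbf{u}_2\), \(C_{d_0}=\{d_0,d_1\}\) with \(\gamma_{d_0}(d_i)=\mathbf{u}_{i+1}\), \(C_{e_0}=\{(e_0)_{\mathbf{u}_1},(e_1)_{\mathbf{u}_1}\}\) and \(C_{e_1}=\{(e_0)_{\mathbf{u}_2},(e_1)_{\mathbf{u}_2}\}\). Both \(d_0\) and \(d_1\) lie in \(C_{d_0}\).

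If \(\gamma_{e_0},\gamma_{e_1}\) both send \((e_j)_{\mathbf{u}_i}\) to \(\mathbf{u}_{j+1}\) (the paper's \(\mathfrak{G}_1\)), the shift from \(d_0\) to \(d_1\) is enabled. But this holds only because \(\varepsilon_{\mathbf{u}_1}\to\varepsilon_{\mathbf{u}_2}\) is enabled, i.e.\ by the hypothesis. If instead \(\gamma_{e_1}\) swaps the two labels, the diagram is still surjective. However, the label of \((r)_{\mathbf{u}_i}\) then becomes \(\mathbf{u}_k\), where \(k-i\) has the parity of the number of \(e_1\)'s in \(r\). So \((r)_{\mathbf{u}_1}\mapsto(r)_{\mathbf{u}_2}\) never preserves labels, and the shift is not enabled.

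In these diagrams every path of \(\mathcal{P}(G_{2,2},R)\) is a tile element. Your main case therefore never occurs, and everything rests on the false claim. In general you also give no argument that the expansions \(p\mapsto pe\) you propose for mixed pairs terminate in pairs of the good kind.

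The missing observation is that empty paths \(\varepsilon_{\mathbf{u}}\) can be internal. Take \(p_1\) of maximal length among \emph{all} \(r\in\mathcal{P}(G,R)\) with \(C_r\preceq p\), without excluding \(p\in C_{p_1}\). When \(T(p)\) is not a sink this maximum exists: surjectivity gives \(r'\) with \(C_{r'}\neq\emptyset\) lying under \(pe\), which forces each such \(r\) to be a proper prefix of \(r'\). You assert the existence of your longest \(p_1\) without such an argument.

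If \(p\in C_{p_1}\), then \(\gamma_{p_1}(p)=\varepsilon_{\mathbf{u}}\). Maximality says \(\varepsilon_{\mathbf{u}}\) lies in no \(C_d\) (otherwise \(p\in C_{p_1d}\)), i.e.\ \(\mathbf{u}\) is unblocked. Hence \(\varepsilon_{\mathbf{u}}\notin B_{T(p_1)}\), so it is internal. If \(p\notin C_{p_1}\), your argument already gives internality.

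So \(\gamma_{p_1}(p)\) and \(\gamma_{q_1}(q)\) are always internal, and the hypothesis together with \lemref{shift-translation-lem} and \lemref{enable-image-lem} applies directly. There are no mixed or both-tile cases. Only sink termini need separate treatment, where surjectivity forces a singleton tile \(C_{p_1}=\{p\}\), as in the paper.

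Your separate worry about paths not lying under \(C_{\varepsilon_R}\) is legitimate; the paper's proof passes over it. It is handled as you suggest: split \(\pi_{p,q}\) over a sufficiently deep exact cover of \(p\) and take unions of the resulting \(D\)'s and \(\nu\)'s.
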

\begin{proof}
	We will first show that \(\mathfrak{G}\) enables the shift between any \(p_0,q_0\in\mathcal{P}(H,S)\) with \(u:=T(p_0)=T(q_0)\) not a sink. To do this we will take some \(e\in o^{-1}(u)\) and use that since \(\mathfrak{G}\) is surjective there exist some \(p_1\in \mathcal{P}(G,R)\) s.t. \(p_0\precneq p_0e\preceq C_{p_1}\) and \(C_{p_1}\neq \emptyset\). This means that for any \(p\in\mathcal{P}(G,R)\) with \(C_p\preceq p_0\) we must have \(p\precneq p_1\). We can take \(p_2\in\mathcal{P}(G,R)\) with maximal length s.t. \(C_{p_2}\preceq p_0\) and then we have \(p_0\not\preceq C_{p_2d}\) for each \(d\in o ^{-1}(T(p_2))\) and thus 
		\[\gamma_{p_2}(p_0)\not\preceq \bigcup_{d\in o ^{-1}(T(p)) } C_d=B_{T(p_2)},\]
	making it internal. Analogously, we can get \(q_2\in\mathcal{P}(G,R)\) with \(C_{q_2}\preceq q_0\) and \(\gamma_{q_2}(q_0)\) is internal in \(B_{T(p_2)}\). Now since by assumption the shift from \(\gamma_{p_2}(p_0)\) to \(\gamma_{q_2}(q_0)\) is enabled by \(\mathfrak{G}\). Thus, by \lemref{enable-image-lem} and \lemref{shift-translation-lem}, \(\pi_{p_0,q_0}\) is in the image of \(\Gamma\).

	Now we will just have to deal with \(p_0,q_0\in\mathcal{P}(H,S)\) with \(u:=T(p_0)=T(q_0)\) being a sink. In this case since \(p_0\), \(q_0\) are each prefixes only of themselves and \(\mathfrak{G}\) is surjective we must have some \(p_1,q_1\in\mathcal{P}(G,R)\) s.t. \(C_{p_1}=\{p_0\}\) and \(C_{q_1}=\{q_0\}\). This means that we have \(\Gamma(\pi_{p_1,q_1})=\pi_{p_0,q_0}\).
	
	The converse follows from \lemref{enable-image-lem} and \lemref{shift-translation-lem}.
\end{proof}

While gluing diagrams are a convenient way of describing shift-preserving homomorphisms, they do not describe them uniquely, as two different diagrams can easily describe the same function. While showing exactly when this is the case, is not necessary for the purposes of this article we will show one way of modifying gluing diagrams that doesn't change the described homomorphism. For this we will first look at a way of connecting \(\mathcal{P}(G,x)\) and \(\mathcal{P}(G,x^{\mathbf{v}})\) for any \(\mathbf{v}\in x\). We remind of the convention that \(\mathbf{M}_{x^{\mathbf{v}}}=\mathbf{M}_x\setminus\{\mathbf{v}\}\sqcup o^{-1}(v)\), with the underlying object of any \(e\in o^{-1}(v)\) being \(t(e)\).

\begin{defn}
	For any \(x\in\mathcal{FM}_G\), \(\mathbf{v}\in x\), we define the function
		\[\kappa_{\mathbf{v}}: \mathcal{P}(G,x^{\mathbf{v}})\to\mathcal{P}(G,x)\setminus \{\varepsilon_{\mathbf{v}}\}\]
	as follows for any  \(p\in \mathcal{P}(G,x^{\mathbf{v}})\)
		\[\kappa_{\mathbf{v}}(p)=	\begin{cases}
										e_{\mathbf{v}}p, & O(p)=e\in o^{-1}(v)\subseteq\mathbf{M}_{x^{\mathbf{v}}}\\
										p, &  O(p)\not\in o^{-1}(v)
									\end{cases},\] 
	where \(e_{\mathbf{v}}\) is the element in \(\mathcal{P}(G,x)\) with underlying path \(e\in o^{-1}(v)\subseteq \mathcal{P}(G)\) and \(O(e_{\mathbf{v}})=\mathbf{v}\).


\end{defn}
We note that \(\kappa_{\mathbf{v}}\) is a bijection since any non-empty path \(p\in \mathcal{P}(G,x)\) with \(O(p)=\mathbf{v}\) is of the form  \(e_{\mathbf{v}}q\) for some \(e\in o^{-1}(v)\) and some \(q\in\mathcal{P}(G,T(q))\). The key property of this function is the following.
\begin{lemma}\label{cutting-gluing-lem}
	For any independent \(C\subseteq\mathcal{P}(G,R)\), \(p\in C\), \(x\in \mathcal{FM}_G\) and any terminus-maintaining, injective function \(\gamma:C\to x\), which we will expand to \(\gamma:\mathcal{P}(G,R)_C\to \mathcal{P}(G,x)\) as usual, we have for any \(p\in C\)
		\[\kappa_{\gamma(p)}\circ\gamma^p=\gamma|_{\mathcal{P}(G,R)_{C^p}}\] 	
	This means especially that	
		\[C^p\circ_{\gamma^p}\kappa^{-1}_{\gamma(p)}(D)=C\circ_{\gamma} D\]
	for any  \(D\subseteq \mathcal{P}(G,x)\setminus \{\varepsilon_{\mathbf{v}}\}\).
\end{lemma}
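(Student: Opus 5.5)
The argument is a direct unfolding of definitions: compare both sides on an arbitrary path of \(\mathcal{P}(G,R)_{C^p}\), then deduce the gluing statement by taking preimages. First note that \(\mathcal{P}(G,R)_{C^p}\subseteq\mathcal{P}(G,R)_C\), since every element of \(C^p\) has a prefix in \(C\). So the right-hand side \(\gamma|_{\mathcal{P}(G,R)_{C^p}}\) makes sense. Also, \(\gamma^p:C^p\to x^{\gamma(p)}\) is terminus-maintaining and injective, with the convention \(\mathbf{M}_{x^{\gamma(p)}}=\mathbf{M}_x\setminus\{\gamma(p)\}\sqcup o^{-1}(T(p))\). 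Hence it extends as usual to \(\mathcal{P}(G,R)_{C^p}\to\mathcal{P}(G,x^{\gamma(p)})\).

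Take \(s\in\mathcal{P}(G,R)_{C^p}\). Because \(C^p\) is independent, \(s=cr\) for a unique \(c\in C^p\) and some \(r\in\mathcal{P}(G,T(c))\). There are two cases.

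\emph{Case \(c\in C\setminus\{p\}\).} Then \(\gamma^p(s)=r_{\gamma(c)}\), and its origin \(\gamma(c)\) is not in \(o^{-1}(T(p))\). So \(\kappa_{\gamma(p)}\) acts as the identity on it and returns \(r_{\gamma(c)}=\gamma(cr)\).

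\emph{Case \(c=pe\) with \(e\in o^{-1}(T(p))\).} Then \(\gamma^p(s)=r_e\), the path \(r\) based at the copy \(e\) in the multiset. By definition \(\kappa_{\gamma(p)}(r_e)=e_{\gamma(p)}r=(er)_{\gamma(p)}\). On the other side, \(\gamma(per)=(er)_{\gamma(p)}\), so the two agree.

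This proves \(\kappa_{\gamma(p)}\circ\gamma^p=\gamma|_{\mathcal{P}(G,R)_{C^p}}\). The only care needed is in the bookkeeping of the multiset elements.

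For the gluing statement, I will use that \(\kappa_{\gamma(p)}\) is a bijection onto \(\mathcal{P}(G,x)\setminus\{\varepsilon_{\gamma(p)}\}\). So for \(D\) avoiding \(\varepsilon_{\gamma(p)}\) we have \(\kappa_{\gamma(p)}(\kappa^{-1}_{\gamma(p)}(D))=D\), and therefore
\[C^p\circ_{\gamma^p}\kappa^{-1}_{\gamma(p)}(D)=(\gamma^p)^{-1}(\kappa^{-1}_{\gamma(p)}(D))=(\kappa_{\gamma(p)}\circ\gamma^p)^{-1}(D)=\{s\in\mathcal{P}(G,R)_{C^p}\mid\gamma(s)\in D\}.\]
It remains to show this equals \(\gamma^{-1}(D)=C\circ_\gamma D\).

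This is the only mildly subtle step: the preimage under \(\gamma\) ranges over \(\mathcal{P}(G,R)_C\), not over \(\mathcal{P}(G,R)_{C^p}\). The paths in \(\mathcal{P}(G,R)_C\setminus\mathcal{P}(G,R)_{C^p}\) are exactly those \(s\) with \(p\preceq s\) but no \(pe\preceq s\), which means \(s=p\). Since \(\gamma(p)=\varepsilon_{\gamma(p)}\notin D\), excluding \(p\) loses nothing, and the two sets coincide.
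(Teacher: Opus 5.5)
Your proof is correct and follows essentially the same route as the paper: you split on whether the element of \(C^p\) below the path lies in \(C\setminus\{p\}\) or has the form \(pe\), and then take preimages to get the gluing identity. The only addition is in the final step. There you check explicitly that \(\mathcal{P}(G,R)_C\setminus\mathcal{P}(G,R)_{C^p}=\{p\}\) and that \(\gamma(p)=\varepsilon_{\gamma(p)}\notin D\), which settles a domain mismatch that the paper's one-line equality \((\gamma^p)^{-1}(\kappa^{-1}_{\gamma(p)}(D))=\gamma^{-1}(D)\) passes over without comment.
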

\begin{proof}
	Firstly we not that the equation has to be shown only for paths in \(C^p\) since any \(q\in \mathcal{P}(G,R)_{C^p}\) can be written as \(\tilde{q}r\) where \(\tilde{q}\in C^p\) and \(r\in \mathcal{P}(G,T(\tilde{p})))\) and so 
		\[\kappa_{\gamma(p)}(\gamma^p(\tilde{q}r))=\kappa_{\gamma(p)}(\gamma^p(\tilde{q}))r \text{ and } \gamma(\tilde{q}r)=\gamma(\tilde{q})r.\]

	For paths \(q\in C^p\) we will distinguish the cases where \(p\not\preceq q\) and \(p\preceq q\).
	In the case \(p\not\preceq q\) we also have \(q\in C\) and \(\gamma^p(q)\notin	o^{-1}(e)\) and thus
		\[\kappa_{\gamma(p)}(\gamma^p(q))=\gamma^p(q)=\gamma(q).\]
	On the other hand if we have \(p\preceq q\) we  have some \(e\in o^{-1}(v)\) s.t \(q=pe\) and thus	
		\[\kappa_{\gamma(p)}(\gamma^p(q))=\kappa_{\gamma(p)}(\gamma^p(pe))=\kappa_{\gamma(p)}(\varepsilon_{e})=e_{\gamma(p)}=\gamma(pe),\]
	here we again assume that \(o^{-1}(v)\subseteq \mathbf{M}_{x^{\gamma(p)}}\).

	The second point follows from the first one since by definition we have
		\[C\circ_{\gamma^p}\kappa^{-1}_{\gamma(p)}(D)=(\gamma^p)^{-1}(\kappa^{-1}_{\gamma(p)}(D))=\gamma^{-1}(D)=C\circ_{\gamma} D.\]


\end{proof}

We will now define how one can expand a gluing diagram along an unblocked element.
\begin{defn}\label{exp-defn}
	For any gluing diagram \(\mathfrak{G}=(x_v,C_{\varepsilon_R},\gamma_{\varepsilon_R},C_e,\gamma_e)_{v\in VG,e\in EG}\), any unblocked \(\mathbf{u}\in x_w\) for some \(w\in VG\), we can define \(\mathfrak{G}^{\mathbf{u}}=(C^{\mathbf{u}}_{\varepsilon_R},\gamma^{\mathbf{u}}_{\varepsilon_R},y_v,C^{\mathbf{u}}_e,\gamma^{\mathbf{u}}_e)_{v\in VG,e\in EG}\) as follows:
	\begin{itemize}
		\item \(y_v:=	\begin{cases}
							x_v, & v\neq w\\
							x^{\mathbf{u}}_w, & v=w
						\end{cases}\).
		\item \(C^{\mathbf{u}}_{\varepsilon_R}:=\begin{cases}
													C^{\gamma ^{-1}_{\varepsilon_R}(\mathbf{u})}_{\varepsilon_R},& R=w\\
													C_{\varepsilon_R}, & R\neq w
												\end{cases}\) and \(\gamma^{\mathbf{u}}_{\varepsilon_R}:=\begin{cases}
													\gamma^{\gamma ^{-1}_{\varepsilon_R}(\mathbf{u})},& R=w\\
													\gamma_{\varepsilon_R}, & R\neq w
												\end{cases}\)
		\item \(C^{\mathbf{u}}_{e}:=	\begin{cases}
											C_{e}, & o(e)\neq w,t(e)\neq w\\
											C^{\gamma ^{-1}_{e}(\mathbf{u})}_{e},& o(e)\neq w,t(e)= w\\
											\kappa^{-1}_{\mathbf{u}}(C_e), & o(e)=w, t(e)\neq w\\
											\kappa^{-1}_{\mathbf{u}}(C^{\gamma ^{-1}_e(\mathbf{u})}_e), & o(e)=w, t(e)=w	
										\end{cases}\) 
		and \(\gamma^{\mathbf{u}}_{e}:=	\begin{cases}
												\gamma_{e}, & o(e)\neq w,t(e)\neq w\\
												\gamma^{\gamma ^{-1}_{e}(\mathbf{u})}_{e},& o(e)\neq w,t(e)= w\\
												\gamma_e\circ \kappa_{\mathbf{u}}, & o(e)=w, t(e)\neq w\\
												\gamma^{\gamma ^{-1}_e(\mathbf{u})}_e\circ\kappa_{\mathbf{u}}, & o(e)=w, t(e)=w	
											\end{cases}\) 
								
	\end{itemize}
	For a floating diagram \(\mathfrak{G}=(x_v,C_e,\gamma_e)_{v\in VG,e\in EG}\) we will define \(\mathfrak{G}^{\mathbf{u}}=(y_v,C^{\mathbf{u}}_e,\gamma^{\mathbf{u}}_e)_{v\in VG,e\in EG}\) as above.
	\end{defn}
	To see how expanding changes the visual representation of the gluing diagram from \figref{fig1}, see \figref{fig2}.
	\begin{figure}
		\begin{center}
			\begin{tikzpicture}
				\node[big hollow square]     (x) at (-6,0)   {};
				\node[big hollow circle]   (y) at (-3,0)    {};

				\path[-latex]
				(x)     edge[bend left]                         node[above]             {} (y)
				edge[out=135, in=215, looseness=15]     node[left]              {2} (x)
				(y)     edge[bend left]                         node[below]             {}  (x)
				edge[out=45, in=315, looseness=15]      node[right]             {3} (y)
				;
				\node[big solid square]     (z) at (3,0)   {};
				\node[big solid circle]   (w) at (6,0)    {};

				\path[-latex]
				(z)     edge[bend left]                         node[above]             {2} (w)
				edge[out=135, in=215, looseness=15]     node[left]              {2} (z)
				(w)     edge[bend left]                         node[below]             {}  (z)
				edge[out=45, in=315, looseness=15]      node[right]            {2} (w)
				;
			\end{tikzpicture}
			\begin{tikzpicture}
				[edge from parent/.style={draw,-latex},
					level distance=10mm,
					level 1/.style={sibling distance=14mm},
					level 2/.style={sibling distance=5mm,},
					level 3/.style={sibling distance=3mm,},
					label distance=-2mm]
				\node[big hollow square] (r1) at (-5,0) {}
				child {node[big hollow green square] {}}
				child {node[big hollow red square] {}}
				child {node[big hollow blue circle] {}};
				\node[big solid square] (r2) at (5,0) {\textcolor{white}{1}}
				child {node[big solid green square, label={[font=\small,text=black]-10:$1$}] {}}
				child {node[big solid red square, label={[font=\small,text=black]-10:$1$}] {}}
				child {node[big solid circle] {}
						child {node[big solid circle] {}
							child[sibling distance=5mm] {node[big solid blue circle, label={[font=\small,text=black]-10:$1$a}] {}}
							child[sibling distance=5mm] {node[big solid blue circle, label={[font=\small,text=black]-10:$1$b}] {}}
							child[sibling distance=5mm] {node[big solid blue square, label={[font=\small,text=black]-10:$1$a}] {}}}
						child {node[big solid blue circle, label={[font=\small,text=black]-10:$2$}] {}}
						child {node[big solid blue square, label={[font=\small,text=black]-10:$3$}] {}}
					}
				child {node[big solid circle] {}
						child {node[big solid blue circle, label={[font=\small,text=black]-10:$4$}] {}}
						child {node[big solid blue circle, label={[font=\small,text=black]-10:$5$}] {}}
						child {node[big solid blue square, label={[font=\small,text=black]-10:$6$}] {}}
					};
				\node[big hollow circle] (r1) at (-5,-4) {}
				child[sibling distance=10mm] {node[big hollow green circle] {}}
				child[sibling distance=10mm] {node[big hollow red circle] {}}
				child[sibling distance=10mm] {node[big hollow blue square] {}}
				child[sibling distance=10mm] {node[big hollow cyan circle] {}};

				\node[big solid circle] (r21) at (-1,-4)	{\textcolor{white}{1a}}
					child[sibling distance=6mm] {node[big solid green circle, label={[font=\small,text=black]-10:$1$a}] {}}
					child[sibling distance=6mm] {node[big solid green circle, label={[font=\small,text=black]-10:$1$b}] {}}
					child[sibling distance=6mm] {node[big solid green square, label={[font=\small,text=black]-10:$1$a}] {}};
				\node[big solid green circle, label={[font=\small,text=black]-10:$2$} ] (r22) at (-1,-4)		{\textcolor{white}{1b}};
				\node[big solid square] (r23) at (1,-4) 	{\textcolor{white}{1c}}
						child[sibling distance=5mm] {node[big solid green circle, label={[font=\small,text=black]-10:$4$}] {}}
						child[sibling distance=5mm] {node[big solid green circle, label={[font=\small,text=black]-10:$5$}] {}}
						child[sibling distance=5mm] {node[big solid green square, label={[font=\small,text=black]-10:$6$}] {}}
						child[sibling distance=5mm] {node[big solid green square, label={[font=\small,text=black]-10:$3$}] {}};

				\node[big solid circle] (r3) at (3,-4) {\textcolor{white}{2} }
				child[sibling distance=7mm] {node[big solid circle] {}
					child[sibling distance=4mm] {node[big solid red circle, label={[font=\small,text=black]-10:$1$a}] {}}
					child[sibling distance=4mm] {node[big solid red circle, label={[font=\small,text=black]-10:$1$b}] {}}
					child[sibling distance=4mm] {node[big solid red square, label={[font=\small,text=black]-10:$1$a}] {}}}
				child[sibling distance=7mm] {node[big solid red circle, label={[font=\small,text=black]-10:$2$}] {}}
				child[sibling distance=7mm] {node[big solid square] {}
						child[sibling distance=4mm] {node[big solid red circle, label={[font=\small,text=black]-10:$4$}] {}}
						child[sibling distance=4mm] {node[big solid red circle, label={[font=\small,text=black]-10:$5$}] {}}
						child[sibling distance=4mm] {node[big solid red square, label={[font=\small,text=black]-10:$6$}] {}}
						child[sibling distance=4mm] {node[big solid red square, label={[font=\small,text=black]-10:$3$}] {}}
					};

				\node[big solid blue square, label={[font=\small,text=black]-10:$1$}] (r4) at (4,-4) {\textcolor{white}{3}};

				\node[big solid circle] (r4) at (5,-4) {\textcolor{white}{4} }
					child[sibling distance=5mm] {node[big solid cyan circle, label={[font=\small,text=black]-10:$1$a}] {}}
					child[sibling distance=5mm] {node[big solid cyan circle, label={[font=\small,text=black]-10:$1$b}] {}}
					child[sibling distance=5mm] {node[big solid cyan square, label={[font=\small,text=black]-10:$1$a}] {}};

				\node[big solid cyan circle, label={[font=\small,text=black]-10:$2$}] (r5) at (6,-4) {\textcolor{white}{5} };

				\node[big solid square] (r5) at (7,-4) {\textcolor{white}{6}}
				child[sibling distance=5mm] {node[big solid cyan circle, label={[font=\small,text=black]-10:$4$}] {}}
				child[sibling distance=5mm] {node[big solid cyan circle, label={[font=\small,text=black]-10:$5$}] {}}
				child[sibling distance=5mm] {node[big solid cyan square, label={[font=\small,text=black]-10:$6$}] {}}
				child[sibling distance=5mm] {node[big solid cyan square, label={[font=\small,text=black]-10:$3$}] {}};
			\end{tikzpicture}
		\end{center}
		\caption{ The floating gluing diagram from \figref{fig1} expanded by the solid circle numbered \(1\) in \(x_{\circ}\)}
		\label{fig2}
	\end{figure}
	
	We can show that expanding a gluing diagram does not change the path monoid function that it describes, using \lemref{cutting-gluing-lem}. We will also require the observation that 
		\[C\circ_{\gamma}D^r=\gamma^{-1}(D^r)=\gamma^{-1}(D)^{\gamma^{-1}(r)}=(C\circ_{\gamma} D)^{\gamma^{-1}(r)}\]
	for any \(C\subseteq \mathcal{P}(G,R)\), \(x\in\mathcal{FM}_H\), any bijection \(\gamma:C\to x\) and any \(D\subseteq \mathcal{P}(D,x)\).
	\begin{prop}
		For any gluing diagram \(\mathfrak{G}=(x_v,C_{\varepsilon_R},\gamma_{\varepsilon_R},C_e,\gamma_e)_{v\in VG,e\in EG}\), any \(w\in VG\) and any unblocked \(\mathbf{u}\in x_w\) the same path monoid homomorphism described by \(\mathfrak{G}\) and \(\mathfrak{G}^{\mathbf{u}}\)
	\end{prop}
	\begin{proof}
		Using the definition of the path monoid homomorphism described by a gluing diagram, this is equivalent to showing 
			\[\sum_{c\in C_p}c=\sum_{d\in C^{\mathbf{u}}_p}d\]
		for any \(p\in\mathcal{P}(G,R)\). To do this we will show via induction that
			\[C_p^{\mathbf{u}}=	\begin{cases}
									C_p, & T(p)\neq u\\
									C^{\gamma^{-1}_p(\mathbf{u})}_p, & T(p)=u
								\end{cases}\]
		and 
			\[\gamma^{\mathbf{u}}_p=\begin{cases}
									\gamma_p, & T(p)\neq w\\
									\gamma^{\gamma^{-1}_p(\mathbf{u})}_p, & T(p)=w
								\end{cases}.\]
		The beginning case for \(p=\varepsilon_R\) follows from the definition of expanding a gluing diagram. For the induction step we take some \(p\in \mathcal{P}(G,R)\) and any \(e\in o^{-1}(T(p))\), we will consider four cases:
		\begin{itemize}
			\item If \(T(p)=o(e)\neq w\) and \(t(e)\neq w\) we simply have 
					\[C_{pe}^{\mathbf{u}}=C^{\mathbf{u}}_p\circ_{\gamma^{\mathbf{u}}_p}C^{\mathbf{u}}_e=C_p\circ_{\gamma_p}C_e=C_{pe}\]
				and since \(\gamma^{\mathbf{u}}_e=\gamma_e\) we also get \(\gamma^{\mathbf{u}}_{pe}=\gamma_{pe}\)
			\item If \(T(p)=o(e)= w\) and \(t(e)\neq w\) we have
					\[C_{pe}^{\mathbf{u}}=C^{\mathbf{u}}_p\circ_{\gamma^{\mathbf{u}}_p}C^{\mathbf{u}}_e=C^{\gamma^{-1}_p(\mathbf{u})}_p\circ_{\gamma^{\gamma^{-1}_p(\mathbf{u})}_p}\kappa^{-1}_{\mathbf{u}}(C_e)=C_p\circ_{\gamma_p} C_e=C_{pe}\]
				and  
				\[\gamma^{\mathbf{u}}_{pe}=\gamma^{\mathbf{u}}_e\circ \gamma^{\mathbf{u}}_p=\gamma_e\circ\kappa_{\mathbf{u}}\circ\gamma^{\gamma^{-1}_p(\mathbf{u})}_p=\gamma_e\circ \gamma_p|_{\mathcal{P}(G,R)_{C_p^p}}=\gamma_{pe},\]
				using \lemref{cutting-gluing-lem}.
				
			\item If \(T(p)=o(e)\neq w\) and \(t(e)= w\) we have
					\[C_{pe}^{\mathbf{u}}=C^{\mathbf{u}}_p\circ_{\gamma^{\mathbf{u}}_p}C^{\mathbf{u}}_e=C_p\circ_{\gamma_p} C^{\gamma ^{-1}_{e}(\mathbf{u})}_e=C_{pe}^{\gamma ^{-1}_{pe}(\mathbf{u})}\]
				and since \(\gamma^{\mathbf{u}}_{e}=\gamma^{\gamma ^{-1}_e(\mathbf{u})}_e\) we also have \(\gamma_{pe}=\gamma^{\gamma^{-1}_{pe}(\mathbf{u})}_{pe}\)
			\item If  \(T(p)=o(e)= w\) and \(t(e)= w\) we have
					\[C_{pe}^{\mathbf{u}}=C^{\mathbf{u}}_p\circ_{\gamma^{\mathbf{u}}_p}C^{\mathbf{u}}_e=C^{\gamma^{-1}_p(\mathbf{u})}_p\circ_{\gamma^{\gamma^{-1}_p(\mathbf{u})}_p}\kappa^{-1}_{\mathbf{u}}(C^{\gamma ^{-1}_e(\mathbf{u})}_e)=C_p\circ_{\gamma_p} C^{\gamma ^{-1}_e(\mathbf{u})}_e=C^{\gamma ^{-1}_{pe}(\mathbf{u})}_{pe}\]
				and since \(\gamma^{\mathbf{u}}_e=\gamma^{\gamma ^{-1}_e(\mathbf{u})}_e\circ\kappa_{\mathbf{u}}\) we also have \(\gamma^{\mathbf{u}}_{pe}=\gamma^{\gamma^{-1}_{pe}(\mathbf{u})}_{pe}\), the same way as in the second point using \lemref{cutting-gluing-lem}.	
		\end{itemize}
	\end{proof}
	

We note that this definition only makes sense for \(\mathbf{u}\) unblocked because of the image of \(\kappa_{\mathbf{u}}\) not including \(\varepsilon_{\mathbf{u}}\).

\section{Example of application}

In this section we will use the tools from the previous sections to classify the pseudogroups of graph shift that consists of one source and one other vertex. This will provide a solution to the isomorphism problem of Higman-Thompson groups. The first full solution to this problem was provided in \cite{Pardo2011}, with a necessary condition already provided in \cite{higman74}. Our solution will be very elementary and provide a clear method for constructing these isomorphisms.

Firstly we will define the full group of the shift pseudogroup group.
\begin{defn}
	A shift \(\pi:\mathcal{M}_p(G,R)_x \to \mathcal{M}_p(G,R)_y\) is called \textbf{full} if \(x=y=R\) in the path monoid.  The group formed by full shifts under composition will be called the \textbf{full group of shifts} of \((G,R)\) and denoted by \(\mathcal{FS}(G,R)\).
\end{defn}
Since compositions and inverses of full shifts are also full shifts and any full shift can be composed with any other full shift, the full group of shifts is, in fact, a group. Note that if we have a root- and shift-preserving homomorphism \(\Gamma:\mathcal{M}_p(G,R)\to \mathcal{M}_p(H,S)\), we can restrict an induced shift pseudogroup homomorphism \(\Gamma_s:\mathcal{S}(G,R)\to \mathcal{S}(H,S)\) to a homomorphism between the full groups \(\Gamma_f=\Gamma_s|_{\mathcal{FS}(G,R)}\). If \(\Gamma_s\) is an isomorphism so is \(\Gamma_f\).  

Firstly we will define the collection of graphs that we will be dealing with.
\begin{defn}
	 For any \(a\in \mathbb{Z}_{\geq 1}\) and \(n\in \mathbb{Z}_{\geq 2}\) the graphs \(G_{a,n}\) and \(G_n\) by setting 
	\begin{itemize}
		\item \(VG_n=\{v\}\) and \(VG_{a,n}=\{R,v\}\)
		\item \(EG_n=\{e_0,e_1,\dots,e_{n-1}\}\) and \(EG_{a,n}=\{d_0,d_1,\dots d_{a-1}\}\cup EG_n\)
		\item \(\forall 0\leq i<n,\ o_{G_n}(e_i)=o_{G_{a,n}}(e_i)=t_{G_n}(e_i)=t_{G_{a,n}}(e_i)=v\)
		\item \(\forall 0\leq j<a,\ o_{G_{a,n}}(d_j)=R,\ t_{G_{a,n}}(d_j)=v\).
	\end{itemize}
\end{defn}
\begin{figure}
	\begin{center}
		\begin{tikzpicture}
			\node[big hollow square]   	(x) at (-4,0)   {};
			\node[big hollow circle]   	(y) at (-2,0)  	{};

			\path[-latex]
			(x)     edge[]                         				node[above]             {a} (y)
			(y)     edge[out=45, in=315, looseness=15]      	node[right]             {n} (y)
			;
			\node[] (a) at (-3,-1) {\(G_{a,n}\)}; 
			\node[big solid circle]     (z) at (3,0)   {};

			\path[-latex]
			(z)     edge[out=45, in=315, looseness=15]     		node[right]              {n} (z)
			;
			\node[] (a) at (3,-1) {\(G_{n}\)};
		\end{tikzpicture}
	\end{center}
	\caption{The graphs \(G_{a,n}\) and \(G_n\)}
\end{figure}
We note that \(G_n\) is a subgraph of \(G_{a,n}\), \(G_n\) is a subgraph of \(G_m\) and \(G_{a,n}\) is a subgraph of \(G_{b,m}\), whenever \(a\leq b\) and \(n\leq m\). Following the framework from \cite{scott84} we can define the Higman-Thompson group.

\begin{defn}
	The Higman-Thompson groups \(V_{a,n}\) are defined as 
		\[V_{a,n}:=\mathcal{FS}(G_{a,n},R)\]
\end{defn}

This means that any shift-preserving homomorphism between \(\mathcal{M}_p(G_{a,n},R)\) and \(\mathcal{M}_p(G_{b,m},R)\) induces homomorphisms between \(V_{a,n}\) and \(V_{b,m}\). To get a clearer picture of such homomorphisms we can give concrete conditions when there is a gluing diagram connecting \(G_{a,n}\) and \(G_{b,m}\)

\begin{prop}
	For any \(a,b\geq 1\), \(n,m\geq 2\) there exist a gluing diagram connecting \((G_{a,n},R)\) and \((G_{b,m},R)\) if and only if there exists some \(l\geq 1\) s.t.:
	\begin{itemize}
		\item \(\exists k_1\geq 0,\ l\cdot a= b+k_1\cdot(m-1)\)
		\item \(\exists k_2\geq 0,\ l\cdot n =l+k_2\cdot (m-1)\).
	\end{itemize}
\end{prop}
\begin{proof}
	Firstly, we note that for any \(l\geq 0\) all the bases of \(\mathcal{P}(G_{b,m},l\cdot v)\) have cardinality \(l+k\cdot(m-1)\) for some \(k\geq 0\) and for any \(k\geq 0\) there exists a basis \(B\subseteq\mathcal{P}(G_{b,m},l\cdot v)\) with \(|B|=l+k\cdot(m-1)\). Additionally, any basis of \(\mathcal{P}(G_{b,m},R)\) has cardinality either \(1\) or \(b+k\cdot (m-1)\) for some \(k\) and there exist bases with all these cardinalities. This can be seen by a straightforward inductive argument using expansions.

	For the ''if'' direction of the implication we take bases \(B_R\subseteq \mathcal{P}(G_{b,m},R),B_v\subseteq\mathcal{P}(G_{b,m},l\cdot v)\) with 
		\[|B_R|=b+k_1\cdot(m-1)=l\cdot a \text{ and }|B_v|=l+k_2\cdot(m-1)=l\cdot n\]
	This allows us to partition the bases
		\[B_R=\bigsqcup_{1\leq j \leq a} C_{d_j} \text{ and }B_v=\bigsqcup_{1\leq i \leq n} C_{e_i}\]
	s.t. \(|C_{d_j}|=l\) and \(|C_{e_i}|=l\). We can also assume that \(B_R\neq \{\varepsilon_R\}\) i.e. \(R\notin T(B_R)\), by expanding it. This allows us to take arbitrary terminus preserving bijections \(\gamma_{d_j}:C_{d_j}\to l\cdot v\), for each \(1\leq j \leq a\), and \(\gamma_{e_i}:C_{e_i}\to l\cdot v\), for each \(1\leq i\leq n\). Now we only have to set \(x_v:=l\cdot v\), \(x_R:=R\), \(C_{\varepsilon_R}:=\{\varepsilon_R\}\) and \(\gamma_{\varepsilon_R}:\{\varepsilon_R\}\to R\) to obtain a gluing diagram between \((G_{a,n},R)\) and \((G_{b,m},R)\).

	For the ''only if'' direction we take a gluing diagram \(\mathfrak{G}\). We first may assume that no element of \(x_R,x_v\) has \(R\) as their underlying element, since if we expand \(\mathfrak{G}^\mathbf{R}\) by some element with underlying vertex \(R\) we reduce the number of such elements in \(x_R,x_v\). This means that we can write \(x_R=k\cdot v\) and \(x_v=l\cdot v\), for some \(k,l\geq 1\). Since \(C_{\varepsilon_R}\subseteq\mathcal{P}(G_{b,m},R)\) is a basis(not containing \(\varepsilon_R\) by assumption) we have some \(k_0\geq 0\) s.t. 
		\[k=|C_{\varepsilon_R}|=b+k_0\cdot(m-1).\]
	Since \(B_R\subseteq \mathcal{P}(G,k\cdot v)\) is a basis we get some \(\tilde{k}_1\geq 0\) s.t.
		\[l\cdot a=\sum_{j=0}^{a-1}|C_{d_j}|=|B_R|=k+\tilde{k}_1\cdot (m-1)=b+(k_0+\tilde{k}_1)\cdot(m-1).\]
	And since \(B_v\subseteq  \mathcal{P}(G,l\cdot v)\) is a basis, as well, we have some \(k_2\geq 0\) s.t. 
		\[l\cdot n = \sum_{i=0}^{n-1}|C_{e_i}|=|B_v|=l+k_2\cdot(m-1).\]
	By taking \(k_1=k_0+\tilde{k}_1\) we see that the properties of the lemma are satisfied.
\end{proof}
We note that in any gluing diagram connecting \((G_{a,n},R)\) and \((G_{b,m},R)\) we have \(x_R\neq 0,x_v\neq 0\) since \(C_{\varepsilon_R}\neq \emptyset\) and \(C_{d_j}\neq \emptyset\) for some (or equivalently any) \(1\leq j< a\). This means that the gluing diagrams from the above lemma induce embeddings \(V_{a,n}\hookrightarrow V_{b,m}\). This can be contrasted with the results of \cite[Corollary 11.16]{mattebon2018} and \cite{Birget02082020}, where the embeddings \(V_{1,n}\hookrightarrow V_{1,m}\) where proven to exist and described resp., for any \(m,n\geq 2\). Therefore, not all embeddings between Higman-Thompson groups can be constructed via gluing diagrams. On the other hand, the following will show that whenever there exists an isomorphism between Higman-Thompson groups, one can be described via gluing diagrams. 

We will first show a necessary condition for there being a root- and terminus-preserving isomorphism between \(\mathcal{M}_p(G_{a,n},R)\) and \(\mathcal{M}_p(G_{b,m})\).
\begin{lemma}\label{first-impl-lem}
	For any \(a,b\in\mathbb{Z}_{\geq 1}\) and \(n,m\in\mathbb{Z}_{\geq 2}\) if there exists a root- and terminus-preserving isomorphism
		\[\Gamma:\mathcal{M}_p(G_{a,n},R)\to \mathcal{M}_p(G_{b,n},R),\]
	s.t. \(\Gamma^{-1}\) is also terminus preserving, then
		\[n=m \text{ and }\gcd(a,n-1)=\gcd(b,m-1).\]
	If \(\Gamma\) is an isomorphism we have in fact \(m=n\)
\end{lemma}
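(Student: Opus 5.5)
The plan is to push everything down to the graph monoids via the terminus homomorphism and then compare two very small monoids. I read the statement with target \(\mathcal{M}_p(G_{b,m},R)\).

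\textbf{Step 1: factor through the graph monoids.} Since \(\Gamma\) is terminus-preserving, it factors over \(T_{\mathcal{M}}\), exactly as in the proof of \lemref{image-shift-lem}. This gives a monoid homomorphism \(\Gamma_T:\mathcal{M}_{G_{a,n}}\to\mathcal{M}_{G_{b,m}}\) with \(\Gamma_T\circ T_{\mathcal{M}}=T_{\mathcal{M}}\circ\Gamma\). Here \(T_{\mathcal{M}}\) is surjective on both sides, because \(T(\varepsilon_R)=R\) and \(T(d_0)=v\) (resp. the first edge of \(G_{b,m}\)) and \(R,v\) generate the graph monoid. Applying the same construction to \(\Gamma^{-1}\), which is terminus-preserving by hypothesis, gives \((\Gamma^{-1})_T\). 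Uniqueness of the factorisation through the surjection \(T_{\mathcal{M}}\) shows that \((\Gamma^{-1})_T\) is the inverse of \(\Gamma_T\), so \(\Gamma_T\) is a monoid isomorphism. Root-preservation gives
\[\Gamma_T(R)=\Gamma_T(T_{\mathcal{M}}(\varepsilon_R))=T_{\mathcal{M}}(\varepsilon_R)=R .\]

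\textbf{Step 2: compute the graph monoids.} The defining relations of \(\mathcal{M}_{G_{a,n}}\) are \(R=a\cdot v\) and \(v=n\cdot v\). So the monoid is generated by \(v\) subject only to \(v=n v\). I claim that
\[\mathcal{M}_{G_{a,n}}\cong\{0\}\sqcup\mathbb{Z}/(n-1), \qquad k v\mapsto k \bmod (n-1)\ (k\geq 1),\]
where every nonzero element is identified with a residue and addition of nonzero elements is addition of residues. The map is well defined because the relation \(v=nv\) only changes the coefficient by multiples of \(n-1\). For injectivity I will either use \cite[Lemma 4.3]{ara2007}, or argue directly that \(kv=k'v\) for \(k,k'\geq1\) with \(k\equiv k'\) by repeatedly applying \(v=nv\). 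This is the step where some care is needed, since one must check that no unexpected collapse happens. In this description \(R\) corresponds to the residue \(a \bmod (n-1)\), and the same holds for \(G_{b,m}\) with \(R\mapsto b \bmod (m-1)\).

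\textbf{Step 3: conclude.} A monoid isomorphism sends \(0\) to \(0\) and therefore restricts to a group isomorphism \(\mathbb{Z}/(n-1)\cong\mathbb{Z}/(m-1)\) between the sets of nonzero elements. Comparing orders gives \(n-1=m-1\), so \(n=m\). The isomorphism is determined by \(\Gamma_T(v)=c\cdot v\), and \(c\) must generate the cyclic group, so \(c\) is a unit modulo \(n-1\). From \(\Gamma_T(R)=R\) we get \(ac\equiv b \pmod{n-1}\). Multiplying by a unit does not change the gcd with \(n-1\), so \(\gcd(a,n-1)=\gcd(b,n-1)\).

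For the final sentence of the statement, where only \(\Gamma\) itself is assumed to be an isomorphism, I expect the same comparison of orders to work. One would need that \(\Gamma_T\) is still bijective, and I would try to deduce terminus-preservation of \(\Gamma^{-1}\) from surjectivity of \(\Gamma\) together with \lemref{non-inj-lem}. This is the point I expect to be most delicate. If it fails, I would instead count basis cardinalities modulo \(m-1\) as in the preceding proposition.
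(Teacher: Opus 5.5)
Your proof of the main claim is correct and follows the paper's argument. You factor \(\Gamma\) through \(T_{\mathcal{M}}\) to get a graph-monoid isomorphism \(\Gamma_T\), and you justify its invertibility more carefully than the paper does, via surjectivity of \(T_{\mathcal{M}}\). You then identify the nonzero part of \(\mathcal{M}_{G_{a,n}}\) with \(\mathbb{Z}/(n-1)\), compare orders to get \(n=m\), and use \(\Gamma_T(R)=R\). Your unit argument \(ac\equiv b \pmod{n-1}\) is equivalent to the paper's comparison of the orders of \(a\) and \(b\).

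The paper proves nothing about the ambiguous last sentence beyond what your Step 3 already gives. The route in your final paragraph cannot work, because without the hypothesis on \(\Gamma^{-1}\) the conclusion \(m=n\) is false. For \(m=2\) we have \(\mathcal{M}_{G_{b,2}}=\{0,v\}\), so every root-preserving isomorphism \(\mathcal{M}_p(G_{a,n},R)\to\mathcal{M}_p(G_{b,2},R)\) is automatically terminus-preserving. Such isomorphisms exist for every \(n\): both path monoids are isomorphic to the monoid of continuous \(\mathbb{N}\)-valued functions on a Cantor set, with \(\varepsilon_R\) corresponding to the constant function \(1\).
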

\begin{proof}
	We can factor \(\Gamma\) over the terminus function to get a homomorphism \(\Gamma_T:\mathcal{M}_{G_{a,n}}\to \mathcal{M}_{G_{b,m}}\) s.t. 
		\[\forall x\in \mathcal{M}_p(G_{a,n},R),\ T_{\mathcal{M}}(\Gamma(x))=\Gamma_T(T_{\mathcal{M}}(x)).\]
	As \(\Gamma^{-1}\) is also terminus preserving, \(\Gamma_T\) is in fact an isomorphism.
	We note that \(\mathcal{M}_{G_{a,n}}=\{0\}\cup\{v,2v,\dots, (n-1)\cdot v\}\) and \(\{v,2v,\dots, (n-1)\cdot v\}\) form the cyclic group of order \(n-1\) (with \((n-1)\cdot v\) acting as the neutral element). So since \(\Gamma_T(0)=0\) it restricts to an isomorphism between the cyclic groups of order \(n-1\) and \(m-1\), which means that we have to have \(n=m\). For the second condition we note that
		\[\Gamma_T(a\cdot v)=\Gamma_T(R)=T_{\mathcal{M}_{G_{a,n}}}(\Gamma(\varepsilon_R))=T_{\mathcal{M}_{G_{b,n}}}(\varepsilon_R)=R=b\cdot v\]
	which means that \(a\) has the same order in \(\mathbb{Z}/{(n-1)Z}\) as \(b\) in \(\mathbb{Z}/{(m-1)Z}\) and thus 
		\[\gcd(a,n-1)=\gcd(b,m-1).\]
\end{proof}

We note that if an isomorphism of path monoids that is shift-preserving and shift-surjective, has a shift-surjective (and thus terminus-preserving) inverse, and so we can apply the above lemma. 
This condition is analogous to the necessary condition for Higman-Thompson groups to be isomorphic shown in \cite[Theorem 6.4.]{higman74} (using Rubin's theorem we can show that any isomorphism of Higman-Thompson groups induces a shift-preserving and shift-surjective isomorphism of path monoids and so Higman's condition is implied by the above lemma). We will show that the condition is sufficient for having a shift-preserving isomorphism as well. The first step is to reduce the problem to finding a floating basis from \(G_n\) to itself, with \(x_v= l\cdot v\in \mathcal{FM}_{G_{n}}\) for any \(l\) that is coprime to \(n-1\).

\begin{lemma}\label{floating-to-rooted-lem}
	For any \(a,b\in\mathbb{Z}_{\geq 1}, n\in \mathbb{Z}_{\geq 2}\), s.t. there exists \(l,k\in\mathbb{Z}_{\geq 0}\), with \(l\geq 1\) and
		\[l\cdot a=k\cdot(n-1)+b,\]
	if there exists a floating gluing diagram \((B_v,x_v,C_e,\gamma_e)_{e\in EG_n}\) connecting \(G_n\) to itself, with \(x_v=l\cdot v\), there exists a gluing diagram connecting \((G_{a,n},R)\) and \((G_{b,n},R)\). If the former gluing diagram is shift surjective, then so is the latter.
\end{lemma}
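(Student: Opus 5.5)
The plan is to build the rooted gluing diagram by keeping the floating diagram on the vertex \(v\) unchanged and adding only the data at the root. Concretely, take \(x_v:=l\cdot v\) together with the given \(C_{e_i},\gamma_{e_i}\) for \(0\le i<n\). These are viewed as sets of paths in \(\mathcal{P}(G_{b,n},l\cdot v)\), which is legitimate because \(G_n\) is the full subgraph of \(G_{b,n}\) spanned by \(v\) and no edge enters \(R\), so \(\mathcal{P}(G_n,l\cdot v)=\mathcal{P}(G_{b,n},l\cdot v)\). Set \(x_R:=R\), \(C_{\varepsilon_R}:=\{\varepsilon_R\}\) and \(\gamma_{\varepsilon_R}(\varepsilon_R):=\mathbf{R}\).

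It remains to define \(C_{d_j}\subseteq\mathcal{P}(G_{b,n},R)\) for \(0\le j<a\). We need a basis of \(\mathcal{P}(G_{b,n},R)\) of cardinality \(l\cdot a=b+k(n-1)\). Start from \(\{d'_0,\dots,d'_{b-1}\}\), the edges out of \(R\) in \(G_{b,n}\), and expand \(k\) times along paths ending in \(v\); each expansion adds \(n-1\) paths, exactly as in the cardinality count of the preceding proposition. Every element of this basis ends in \(v\). Partition it into \(a\) blocks \(C_{d_j}\) of size \(l\) and choose arbitrary bijections \(\gamma_{d_j}:C_{d_j}\to l\cdot v\); these are automatically terminus-maintaining. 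Then \(B_R=\bigsqcup_j C_{d_j}\) is a basis and each \(B_v\) is the basis given by the floating diagram, so the definition of a gluing diagram is satisfied.

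For shift-surjectivity the strategy is to use \thmref{shift-surj-thm}, after first checking that the new diagram is surjective.

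\emph{Unblockedness.} Blocking cycles involve only elements of \(x_v\). This is because \(\varepsilon_{\mathbf{R}}\notin C_{\varepsilon_R}\)-type sets: in fact \(B_R\) contains no empty paths. So any blocking cycle is a blocking cycle of the floating diagram, and the hypothesis that the floating diagram is surjective/unblocked controls it.

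\emph{Splitting iterated bases.} For \(\mathbf{u}\in x_v\), the \(\mathbf{u}\)-splitting iterated bases of the floating diagram are iterated bases of the new one, since iterated bases of \(\mathcal{P}(H,x_v)\) only use paths from \(v\) in \(G_{a,n}\), which are the paths of \(G_n\). For \(\mathbf{R}\in x_R\), the trivial basis \(\{\varepsilon_{\mathbf{R}}\}=C_{\varepsilon_R}\) (with \(B'=\{\varepsilon_R\}\)) is already \(\mathbf{R}\)-splitting.

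\emph{Enabling condition.} We must check that the shift between any internal \(p\) of \(B_w\) and internal \(q\) of \(B_{w'}\) with \(T(p)=T(q)\) is enabled.
\begin{itemize}
\item If \(w=w'=v\), this is exactly the hypothesis on the floating diagram.
\item The only internal path of \(B_R\) is \(\varepsilon_{\mathbf{R}}\), because \(B_R\) consists of paths \(d'_i r\). Its terminus is \(R\), and no internal path of \(B_v\) ends in \(R\).
\item So the only remaining case is \(p=q=\varepsilon_{\mathbf{R}}\). This is enabled trivially by \(D_p=D_q=\{\varepsilon_R\}\) and \(\nu=\mathrm{id}\).
\end{itemize}

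The main obstacle is being careful that "the floating diagram is shift-surjective" gives enablement with \(D_p,D_q\subseteq\mathcal{P}(G_n,v)\), and that this transfers verbatim to \(\mathcal{P}(G_{a,n},v)\). It does, because \(\mathcal{P}(G_n,v)=\mathcal{P}(G_{a,n},v)\) and the sets \(C_r\subseteq\mathcal{P}(H,x_v)\) are identical in the two settings.
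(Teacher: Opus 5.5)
Your construction of the rooted diagram and your check of surjectivity are fine, and going through \thmref{shift-surj-thm} is a reasonable alternative to the paper's route. The gap is in the enabling step. Your claim that \(\varepsilon_{\mathbf R}\) is the only internal path of \(B_R\) is false whenever \(k\geq 1\).

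A path is internal if no element of \(B_R\) is a prefix of it, and every path you expanded while building \(B_R\) from \(\{d'_0,\dots,d'_{b-1}\}\) has this property. For instance, if \(d'_i\) was expanded, then neither \(\varepsilon_R\) nor \(d'_i\) lies in \(B_R\). So \(B_R\) has \(k+1\) internal paths, \(k\) of which end in \(v\); in the paper's example \(G_{4,5}\to G_{8,5}\), the expanded \(d'_7\) is one of them.

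Your case list therefore omits every pair \((p,q)\) with \(p\neq\varepsilon_{\mathbf R}\) internal in \(B_R\) and \(q\) internal in \(B_v\) or in \(B_R\). These pairs are not supplied by the floating hypothesis. Below such a \(p\) the tiling is dictated by the arbitrary partition of \(B_R\) into the \(C_{d_j}\) and by the arbitrary \(\gamma_{d_j}\). There is no reason for it to match the tiling below \(q\) without further work.

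What is missing is the decomposition argument the paper uses. First, for \(r,s\in\mathcal{P}(G_{b,n},R)\) with \(B_R\preceq r,s\), the shift \(\pi_{r,s}\) is in the image of \(\Gamma\). To see this, choose \(p_2\) of maximal length with \(C_{p_2}\preceq r\), as in the proof of \thmref{shift-surj-thm}. This forces \(T(p_2)=v\) and makes \(\gamma_{p_2}(r)\) internal in \(B_v\), so the floating hypothesis applies, together with \lemref{shift-translation-lem} and \lemref{enable-image-lem}.

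For a general \(\pi_{p,q}\), pick an independent set \(B\) exactly covering \(p\), deep enough that \(B_R\preceq B\) and \(B_R\preceq\pi_{p,q}(B)\). Then \(\pi_{p,q}=\bigoplus_{r\in B}\pi_{r,\pi_{p,q}(r)}\), so \(\pi_{p,q}\) is in the image as well. The paper stops there: every elementary shift is in the image, hence the diagram is shift-surjective.

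In your framework you would still have to convert this back into enablement of the missing internal pairs. That uses \lemref{enable-image-lem}, and for the mixed case \(q\in\mathcal{P}(H,x_v)\) also \lemref{shift-translation-lem} with \(p_1=\varepsilon_R\), \(q_1=d_0\) and \(q_0=\gamma_{d_0}^{-1}(q)\). Only when \(k=0\) is your argument complete as written.
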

\begin{proof}
	We note that there exists a basis \(B_R\) of \(\mathcal{P}(G_{b,n},R)\) with \(|B_R|=k\cdot(n-1)+b\), since \(B_0=\{d_0,\dots,d_{b-1}\}\subseteq\mathcal{P}(G_{b,n},R)\) is a basis with cardinality \(b\) and for any basis \(B_0\preceq B\subseteq \mathcal{P}(G_{c,n})\) we have \(|B^p|=|B|+(n-1)\) for any \(p\in B\), allowing us to construct \(B_R\).
	Since \(|B_R|=k\cdot(n-1)+b=l\cdot a\) we can partition it arbitrarily into \(a\) independent sets of cardinality \(l\)
		\[B_R=\bigsqcup_{0\leq j <a} C_{d_j}\]
	and endow them with (again arbitrary) functions \(\gamma_{d_j}:C_{d_j}\to x_v=l\cdot v\in\mathcal{FM}_{G_{a,n}}\), since any path in \(B_R\) must end in \(v\) (as \(B\neq \{\varepsilon_R\}\)). 
	
	Now since  \(B_v\subseteq \mathcal{P}(G_n,x_v)= \mathcal{P}(G_{b,n},x_v)\) is also a basis in \(\mathcal{P}(G_{b,n},x_v)\) we can construct the gluing diagram consisting of \((B_v,x_v,C_e,\gamma_e)_{e\in EG_n}\), \((B_R, R, C_d,\gamma_d)_{d\in EG_{b,n}\setminus EG_n}\) and \(C_{\varepsilon_R}=\{\varepsilon_R\}\subseteq\mathcal{P}(G_{b,n},R),\gamma_{\varepsilon_R}: \{\varepsilon_R\}\to R\) connecting \((G_{a,n},R)\) and \((G_{b,n},R)\).

	We note that \(B_R\) is already a splitting basis, so if \((B_v,x_v,C_e,\gamma_e)_{e\in EG_n}\) has an iterated splitting basis (i.e. is surjective), so does the gluing diagram constructed above.

	If \((B_v,x_v,C_e,\gamma_e)_{e\in EG_n}\) is shift surjective, then the isomorphism \(\Gamma:\mathcal{M}_p(G_{a,n},R)\to\mathcal{M}_p(G_{b,n},R)\), described by the non-floating gluing diagram constructed above, has every elementary shift \(\pi_{p,q}\) for any \(B_R\preceq p,q\in \mathcal{P}(G_{b,n},R)\) in its image due to the gluing diagram being unblocked and \lemref{shift-translation-lem}. For every \(p,q\in \mathcal{P}(G_{b,n},R)\) we can take an independent set \(B\subseteq\mathcal{P}(G_{b,n})\) of paths that covers \(p\) exactly and is long enough s.t. \(B_R\preceq B,\pi(B)\) then since
		\[\pi_{p,q}=\bigoplus_{r\in B} \pi_{r,\pi_{p,q}(r)},\]
	\(\pi_{p,q}\) is in the image of \(\Gamma\). Thus, \(\Gamma\) induces an isomorphism of graph pseudogroups and thus the gluing diagram we constructed is shift surjective.
\end{proof}    

To see that whenever \(\gcd(a,n-1)=\gcd(b,n-1)\) there exists a \(l,k\) as in the above lemma, we first note that due to Bezout's theorem\cite{bachet1874problemes} there exists \(l_a,k_a\) s.t. 
	\[l_a\cdot a= k_a(n-1)+\gcd(a,n-1)=k_a(n-1)+\gcd(b,n-1).\]
Additionally, if we let \(l_b\) be the unique number s.t. \(b=l_b\gcd(b,n-1)\) we have 
	\[(l_b\cdot l_a)\cdot a=(l_b\cdot k_a)\cdot(n-1)+l_b\cdot\gcd(b,n-1)=(l_b\cdot k_a)\cdot(n-1)+b.\]
So taking \(l=l_a\cdot l_b\) and \(k=k_a\cdot l_b\) shows that \(a,b\) satisfy the conditions of the above lemma. As we can choose \(l_a\) to  be coprime to \(n-1\) and \(l_b\) is coprime to \(n-1\) by definition, we can always choose \(l\) to be coprime to \(n-1\).

All that remains is finding floating gluing diagrams connecting \(G_n\) to itself that are shift-surjective with \(x_v=l\cdot v\) for any \(l\) that is coprime to \(n-1\). We will call a pair \((l,n)\) s.t. there exists such a floating gluing diagram, \textbf{reachable}. Using expansions of gluing diagram, \defref{exp-defn}, we can show that if \((l,n)\) is reachable so is \((l+(n-1),n)\).

\begin{lemma}\label{easy-impl-lem}
	For any \(l\geq 1,n\geq 2\) if \((l,n)\) is reachable so is \((l+(n-1),n)\).
\end{lemma}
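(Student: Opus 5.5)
The plan is to expand the floating gluing diagram witnessing reachability of \((l,n)\) along a suitable element of \(x_v=l\cdot v\), following \defref{exp-defn}, and to check that the expanded diagram still witnesses reachability, now for \((l+(n-1),n)\).

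First we find an unblocked element. Let \(\mathfrak{G}=(x_v,C_e,\gamma_e)_{e\in EG_n}\) be a shift-surjective floating diagram with \(x_v=l\cdot v\). Since \(G_n\) has \(n\geq 2\) loops at \(v\), the remark after the definition of blocking applies, and some \(\mathbf{u}\in x_v\) is unblocked. We could also argue this directly. If every \(\mathbf{u}\in x_v\) were blocked, then each \(\varepsilon_{\mathbf{u}}\) would lie in \(B_v\), so \(B_v=\{\varepsilon_{\mathbf{u}}\mid\mathbf{u}\in x_v\}\) would have \(l\) elements. But \(B_v\) is the disjoint union of the \(n\) sets \(C_{e_i}\), each in bijection with \(x_v\), so \(|B_v|=nl>l\), a contradiction.

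Next we expand. We form \(\mathfrak{G}^{\mathbf{u}}\). Since \(G_n\) has the single vertex \(v\), only the last case of \defref{exp-defn} occurs: \(C^{\mathbf{u}}_e=\kappa^{-1}_{\mathbf{u}}(C_e^{\gamma_e^{-1}(\mathbf{u})})\) and \(\gamma^{\mathbf{u}}_e=\gamma_e^{\gamma_e^{-1}(\mathbf{u})}\circ\kappa_{\mathbf{u}}\). The new \(y_v=x_v^{\mathbf{u}}\) equals \((l-1)v+n\cdot v=(l+(n-1))\cdot v\). We must check that this is a valid floating gluing diagram. For this we check three things.
\begin{itemize}
\item The map \(\gamma_e^{-1}(\mathbf{u})\) is a genuine path. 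It is not \(\varepsilon_{\mathbf{u}}\), because \(\mathbf{u}\) is unblocked, so \(\kappa^{-1}_{\mathbf{u}}\) is defined on the expanded \(C_e\).
\item The sets \(C^{\mathbf{u}}_e\) are independent and terminus-maintaining. This holds because \(\kappa_{\mathbf{u}}\) is a bijection preserving prefixes and termini away from \(\varepsilon_{\mathbf{u}}\).
\item Their union \(\kappa^{-1}_{\mathbf{u}}\big(B_v^{\gamma^{-1}_{e_*}(\mathbf{u})}\big)\) is a basis of \(\mathcal{P}(G_n,y_v)\). Here \(e_*\) is the edge with \(\gamma_{e_*}^{-1}(\mathbf{u})\in C_{e_*}\), and the expansion is taken at that one element. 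The union is a basis because the expanded set does not contain \(\varepsilon_{\mathbf{u}}\) and covers it, and \(\kappa_{\mathbf{u}}\) identifies bases of \(\mathcal{P}(G,x^{\mathbf{u}})\) with bases of \(\mathcal{P}(G,x)\) not containing \(\varepsilon_{\mathbf{u}}\).
\end{itemize}

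The main obstacle is showing that \(\mathfrak{G}^{\mathbf{u}}\) is again shift-surjective. That is, it must be unblocked, have splitting iterated bases (\propref{surj-lem}), and enable all shifts between internal paths (\thmref{shift-surj-thm}). A floating diagram does not itself describe a homomorphism, so the Proposition that expansion preserves the induced homomorphism cannot be quoted directly. Instead, we pass to rooted diagrams. Using \lemref{floating-to-rooted-lem} with \(a=b=1\) and a root whose starting basis avoids the expanded element, we attach both \(\mathfrak{G}\) and \(\mathfrak{G}^{\mathbf{u}}\) to the same rooted data. By the Proposition on expansion, the two rooted diagrams describe the same \(\Gamma\), so surjectivity and shift-surjectivity transfer. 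We then read the floating properties back off through \lemref{splitting-covering-lem} and \lemref{shift-translation-lem}, using that every \(\mathbf{w}\in y_v\) occurs as \(\gamma_q(p)\) for some \(p\in C_q\).

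If that transfer proves awkward, the fallback is to verify the three conditions directly by transporting along \(\kappa_{\mathbf{u}}\). Iterated bases of \(\mathfrak{G}^{\mathbf{u}}\) correspond to iterated bases of \(\mathfrak{G}\) that lie below the first level. Enabling data \(D_p,D_q,\nu_{p,q}\) can be transported by one expansion, using \rmkref{exp-enable-rmk}. Internal paths of \(B_v^{\mathbf{u}}\) correspond under \(\kappa_{\mathbf{u}}\) to internal paths of \(B_v\) or to \(\varepsilon_{\mathbf{u}}\) itself. The shift starting from \(\varepsilon_{\mathbf{u}}\) is handled by splitting it over the paths \(e_{\mathbf{u}}\).
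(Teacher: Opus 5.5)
Your main route is the paper's own: expand at an unblocked $\mathbf{u}$, attach rooted data, use the proposition that expansion does not change the described homomorphism, and read shift-surjectivity of $\mathfrak{G}^{\mathbf{u}}$ off the rooted diagram. However, the rooting step fails as you state it. \lemref{floating-to-rooted-lem} with $a=b=1$ requires $l=k(n-1)+1$, i.e.\ $l\equiv 1 \pmod{n-1}$, and this is false in general. For instance, the induction in \lemref{reach-lem} applies this lemma to $(l,n)=(2,4)$ to reach $(5,4)$, and $2\not\equiv 1 \pmod 3$.

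The paper instead takes $a=1$, $b=l$, $k=0$. It uses the basis $B_R=\{d_0,\dots,d_{l-1}\}\subseteq\mathcal{P}(G_{l,n},R)$ with a bijection onto $l\cdot v$ and starting basis $\{\varepsilon_R\}$, which gives a shift-surjective diagram $\bar{\mathfrak{G}}$ from $G_{1,n}$ to $G_{l,n}$. Relatedly, you cannot attach ``the same rooted data'' to both $\mathfrak{G}$ and $\mathfrak{G}^{\mathbf{u}}$, because $x_v$ changes. What works is to expand $\bar{\mathfrak{G}}$ itself at $\mathbf{u}$ and note that its loop part is exactly $\mathfrak{G}^{\mathbf{u}}$. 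The root edge falls under the case $o(e)\neq w$, $t(e)=w$ of \defref{exp-defn}, so its set is expanded too.

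Two further points need attention. First, equality of the described homomorphisms only transfers surjectivity of $\Gamma$ as a map. To apply \thmref{shift-surj-thm} to $\bar{\mathfrak{G}}^{\mathbf{u}}$ you need it to be a surjective \emph{diagram}, in particular unblocked. The paper obtains this from the second part of \propref{surj-lem}, using that $G_{l,n}$ is faithful, and your sketch skips this step. Second, your validity check misreads the expansion. Every $C_e$ contains its own preimage $\gamma_e^{-1}(\mathbf{u})$, since each $\gamma_e$ is a bijection onto $x_v$, so all $n$ sets $C_{e_i}$ are expanded, not just one. This is exactly what gives $|C^{\mathbf{u}}_e|=l+n-1=|y_v|$, which $\gamma^{\mathbf{u}}_e$ needs in order to be a bijection. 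Your counting argument for the existence of an unblocked element ($|B_v|=nl>l$) is correct and more explicit than the paper's.
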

\begin{proof}
	Take a floating gluing diagram \(\mathfrak{G}=(B_v,x_v,C_e,\gamma_e)_{e\in EG_n}\) with \(x_v= l\cdot v\) and take some \(\mathbf{u}\in x_v\) that is unblocked (since \(n> 1\), such an element exists). If we look at the expanded gluing diagram \(\mathfrak{G}^{\mathbf{u}}=(y_v,C^{\mathbf{u}}_e,\gamma^{\mathbf{u}}_e)_{e\in EG_n}\), and observe that \(y_v=x^{\mathbf{u}}_v=((l-1)\cdot v+n\cdot v)=(l+(n-1))\cdot v\).
	
	If we now assume that \(\mathfrak{G}\) is shift surjective, we can take the basis \(B_R:=\{d_0,d_1,\dots, d_{l-1}\}\subseteq \mathcal{P}(G_{l,n},R)\) and endow it with a bijection \(\gamma_R:B_R\to l\cdot v\) and set \(C_{\varepsilon_R}=\{\varepsilon_R\}\). This together with \(\mathfrak{G}\) gives us a gluing diagram \(\bar{\mathfrak{G}}\) from \(G_{1,n}\) to \(G_{l,n}\) that describes a shift-preserving and shift-surjective isomorphism \(\Gamma:\mathcal{M}_p(G_{1,n},R)\to\mathcal{M}_p(G_{l,n},R)\). Now since \(\bar{\mathfrak{G}}^{\mathbf{u}}\) describes the same function and because \(G_{l,n}\) is faithful, it must also be shift-surjective by \thmref{shift-surj-thm}. And since \(\bar{\mathfrak{G}}^{\mathbf{u}}\) contains \(\mathfrak{G}^{\mathbf{u}}\), \(\mathfrak{G}^{\mathbf{u}}\) is also shift-surjective.   
	
	This shows that \((l+(n-1), n)\) is also reachable if \((l,n)\) is.
\end{proof}
Now we will also need the ability to change \(n\), for this we will introduce a way of constructing a partitioned basis of \(\mathcal{P}(G_m, x)\), from a partitioned basis of \(\mathcal{P}(G_n,x)\) for the right \(m\) and \(x\in\mathcal{FM}_{G_n}\).

\begin{defn}
	For any \(l\in\mathbb{Z}_{\geq 1}\), \(x\in \mathcal{FM}_{G_n}\subseteq\mathcal{FM}_{G_{n+l}}\), any basis \(B\subseteq \mathcal{P}(G_n,l\cdot v)\), that has a partition
		\[B=\bigsqcup_{i=0}^{n-1} C_i\]
	s.t. each \(C_i\) has a bijection \(\gamma_i:C_i\to l\cdot v\), we can define for any \(B\) internal \(p\in \mathcal{P}(G_n,l\cdot v)\)
		\[C^{p+}=\{pe_j\mid n\leq j< n+l\}\subseteq\mathcal{P}(G_{n+l},l\cdot v)\]
	this allows us to define	
		\[B^{+}=B\cup\bigsqcup_{p\not\preceq B} C^{p+}\subseteq\mathcal{P}(G_{n+l},l\cdot v).\]
\end{defn}
Note that a basis can only have finitely many internal paths, making \(B^+\) finite.
This definition will eventually allow us to construct a floating gluing diagram from \(G_{n+l}\) to itself with \(x_v=l\cdot v\), from a floating gluing diagram from \(G_{n}\) to itself with \(x_v=l\cdot v\). We will need some more lemmas first.
\begin{lemma}\label{B+-is-basis}
	For any basis \(B\subseteq \mathcal{P}(G_n,l\cdot v)\), \(B^+\subseteq \mathcal{P}(G_{n+l},l\cdot v)\) is also a basis.
\end{lemma}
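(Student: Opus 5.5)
We need to show that \(B^+\) is finite, independent, and exactly covers every \(\varepsilon_{\mathbf{w}}\) for \(\mathbf{w}\in l\cdot v\) in \(\mathcal{P}(G_{n+l},l\cdot v)\). Finiteness was noted right after the definition: \(B\) has finitely many internal paths, and each \(C^{p+}\) has \(l\) elements.

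The plan is to argue by induction on the number of expansions needed to build \(B\). By the corollary to \lemref{lem1} (via \cite[Lemma 4.3]{ara2007}), or more directly since in \(G_n\) every vertex is regular and a basis of \(\mathcal{P}(G_n,l\cdot v)\) is exactly a finite set obtained from \(\{\varepsilon_{\mathbf{w}}\mid \mathbf{w}\in l\cdot v\}\) by finitely many expansions, I would write \(B=(\cdots(B_0^{p_1})\cdots)^{p_k}\) with \(B_0=\{\varepsilon_{\mathbf{w}}\}_{\mathbf{w}}\).

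For the direct characterisation: take the finite prefix-closed set \(I\) of paths \(p\) with \(p\precneq q\) for some \(q\in B\). These are exactly the internal paths, since \(B\not\preceq p\) together with covering forces \(p\) to be a proper prefix of some element of \(B\). Then \(I\) is a finite rooted forest containing all roots \(\varepsilon_{\mathbf{w}}\) (when \(B\ne B_0\)), and in \(G_n\) each \(p\in I\) has all its children \(pe_0,\dots,pe_{n-1}\) in \(I\cup B\). This holds because \(B\) exactly covers each root: every extension of \(p\) is comparable to some element of \(B\), and no element of \(B\) is a prefix of \(p\) by independence.

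Then \(B^+\) is precisely the set of leaves of the tree \(I\) when it is considered inside \(\mathcal{P}(G_{n+l},l\cdot v)\). Every \(p\in I\) now has all \(n+l\) children in \(I\cup B\cup C^{p+}\), where the new children \(pe_j\), \(n\le j<n+l\), are not in \(I\) because \(I\subseteq\mathcal{P}(G_n,l\cdot v)\).

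The key step is then a general fact, which I would prove as a small claim: if \(I\) is a finite prefix-closed set of paths in a graph in which every vertex is regular, containing all roots, and \(L\) is the set of paths \(pe\notin I\) with \(p\in I\), then \(L\) is a basis. The proof runs as follows.

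\textbf{Independence.} If \(q\precneq q'\) with \(q,q'\in L\), then \(q\) is a prefix of \(q'=pe\), so \(q\preceq p\in I\). Prefix-closedness gives \(q\in I\), a contradiction.

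\textbf{Exact covering.} Given any path \(r\), take its longest prefix \(p\) lying in \(I\). If \(r=p\), extend \(r\) by any edge whose extension is outside \(I\); such an edge exists because \(I\) is finite, so iterating this reaches \(L\). Otherwise the next edge gives \(pe\in L\) with \(pe\preceq r\).

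Note that the case \(B=B_0\) gives \(B^+=B\), which is trivially a basis. I would apply the claim in \(G_{n+l}\), after checking that \(L=B^+\), i.e. that the \(G_n\)-children of internal paths that lie outside \(I\) are exactly the elements of \(B\).

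The main obstacle is the identification of \(I\) with the internal paths and the verification that every element of \(B\) is a child of an internal path. The latter follows because an element \(q\in B\) with \(q\neq\varepsilon_{\mathbf{w}}\) has its parent strictly below \(q\), hence in \(I\).
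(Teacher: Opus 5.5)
Your proof takes a different route from the paper's, but one step is false as written. The paper never describes \(B\) as a tree. It proves covering directly: for a path \(p\) in \(\mathcal{P}(G_{n+l},l\cdot v)\) it takes the longest prefix \(q\) of \(p\) lying in the old space \(\mathcal{P}(G_n,l\cdot v)\). If \(q=p\), it uses that \(B\subseteq B^+\) is a basis there. Otherwise \(qe_{j_0}\preceq p\) for some \(j_0\geq n\), and either \(B\preceq q\), or \(q\) is internal and \(qe_{j_0}\in B^+\). Independence is dismissed as clear.

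You instead identify \(B^+\) as the frontier of the finite prefix-closed set \(I\) of internal paths, read inside \(G_{n+l}\), and prove an abstract ``frontier is a basis'' claim. This buys an explicit proof of independence and a picture that fits the later counting of internal paths and the bijection \(\rho_B\). The paper's argument is shorter and, because it only uses that \(B\) is a basis of the smaller space, handles every shape of \(B\) automatically.

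The false step is the assertion that \(I\) contains all roots \(\varepsilon_{\mathbf{w}}\) whenever \(B\neq B_0\). A basis may expand some roots and keep others. For \(l\geq 2\) take \(B=B_0^{\varepsilon_{\mathbf{w}_1}}\). Then \(I=\{\varepsilon_{\mathbf{w}_1}\}\), and your \(L\) consists only of the \(n+l\) one-edge paths at \(\mathbf{w}_1\). So \(\varepsilon_{\mathbf{w}_2}\in B^+\setminus L\), hence \(L\neq B^+\), and \(L\) does not even cover \(\varepsilon_{\mathbf{w}_2}\). Your closing remark, that only elements \(q\neq\varepsilon_{\mathbf{w}}\) of \(B\) have a parent in \(I\), already points at this.

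The repair is easy. Set \(L:=\{pe\notin I\mid p\in I\}\cup\{\varepsilon_{\mathbf{w}}\mid \varepsilon_{\mathbf{w}}\notin I\}\).
\begin{itemize}
\item Your independence argument goes through verbatim, since every element of \(L\) still lies outside \(I\).
\item Covering needs one extra case: if \(r\) starts at \(\mathbf{w}\) and \(\varepsilon_{\mathbf{w}}\notin I\), then \(\varepsilon_{\mathbf{w}}\in L\) is a prefix of \(r\).
\item With this, \(L=B^+\) holds for every basis \(B\), and the special case \(B=B_0\) becomes redundant.
\end{itemize}

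Also tidy the case \(r=p\in I\). Finiteness of \(I\) does not guarantee that \(p\) itself has a child outside \(I\). Instead, extend \(r\) edge by edge arbitrarily, which is possible because every vertex of \(G_{n+l}\) is regular. Finiteness of \(I\) forces you out of \(I\) after finitely many steps, and the first path outside \(I\) lies in \(L\).
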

\begin{proof}
	It is clear that \(B^+\) is independent, so we will just have to show that for every \(p\in \mathcal{P}(G_{n+l},l\cdot v)\) there exists some \(\tilde{p}\) with \(p\preceq \tilde{p}\) and \(B^+\preceq \tilde{p}\). For this we will take the longest \(q\in \mathcal{P}(G_{n},l\cdot v)\subseteq \mathcal{P}(G_{n+l},l\cdot v)\) s.t. \(q\preceq p\). If \(q=p\) we have \(p\in \mathcal{P}(G_{n},l\cdot v)\) giving us a \(\tilde{p}\) s.t. \(p\preceq \tilde{p}\) and \(B^+\preceq B\preceq \tilde{p}\), since \(B\) is a basis. Now assuming \(q\neq p\), we have some \(n\leq j_0<n+l\) s.t. \(qe_{j_0}\preceq p\). If \(B\preceq q\) we also have \(B^+\preceq B\preceq p\), otherwise \(q\) is internal in \(B\) and thus \(qe_{j_0}\in B^+\), showing that \(B^+\preceq p\). 
\end{proof}

To see how big the partition that we have endowed \(B^+\) with is we will have to investigate how many internal paths a basis of \(\mathcal{P}(G_{n},l\cdot v)\) has.

\begin{lemma}
	Take a basis \(B\subseteq \mathcal{P}(G_n,l\cdot v)\) then we have some \(k_B\geq 0\) s.t.
		\(|B|=(n-1)\cdot k_B+l,\)
	and \(B\) has \(k_B\) internal paths.
\end{lemma}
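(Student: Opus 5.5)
We argue by induction on the number of internal paths of \(B\), removing one maximal internal path at a time and reversing an expansion. The goal is to show that the number of internal paths, call it \(k_B\), satisfies \(|B|=(n-1)\cdot k_B+l\). In \(G_n\) every vertex is regular with exactly \(n\) outgoing edges, and \(B\) is finite, so \(B\) has finitely many internal paths (as remarked after \defref{internal-defn}). An internal path \(p\) is one with \(B\not\preceq p\); since \(B\) is a basis, some element of \(B\) lies under \(p\), so \(p\) is a proper prefix of an element of \(B\).

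\textbf{Base case.} Suppose \(B\) has no internal paths. Then each empty path \(\varepsilon_{\mathbf{w}}\), for \(\mathbf{w}\in l\cdot v\), is not internal, so \(B\preceq\varepsilon_{\mathbf{w}}\). The only prefix of \(\varepsilon_{\mathbf{w}}\) is itself, so \(\varepsilon_{\mathbf{w}}\in B\). Independence of \(B\) then forces \(B=\{\varepsilon_{\mathbf{w}}\mid \mathbf{w}\in l\cdot v\}\), which has \(l\) elements. This gives \(k_B=0\).

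\textbf{Inductive step.} Suppose \(B\) has at least one internal path, and choose an internal path \(q\) of maximal length.
\begin{itemize}
\item For each edge \(e_i\), the path \(qe_i\) is not internal by maximality, so some element of \(B\) is a prefix of \(qe_i\).
\item That element cannot be a prefix of \(q\), because \(q\) is internal. Hence it equals \(qe_i\) itself, and so \(qe_i\in B\) for all \(0\leq i<n\).
\item Conversely, \(q\) is a proper prefix of some element \(r\in B\). Then \(r\) extends some \(qe_i\in B\), and independence gives \(r=qe_i\). So the elements of \(B\) lying above \(q\) are exactly \(\{qe_i\mid 0\le i<n\}\).
\end{itemize}
Set \(B':=(B\setminus\{qe_i\mid 0\le i<n\})\cup\{q\}\), so that \(B=(B')^q\). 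Then \(B'\) is independent: any \(r\in B'\) comparable to \(q\) would be either a prefix of \(q\), contradicting that \(q\) is internal in \(B\), or above \(q\), and the latter were all removed. Also \(B'\) exactly covers the root: any path comparable to some \(qe_i\) is comparable to \(q\). So \(B'\) is a basis with \(|B'|=|B|-(n-1)\).

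\textbf{Counting internal paths.} It remains to check that \(B'\) has exactly one fewer internal path than \(B\). A path \(p\) is internal in \(B'\) iff no element of \(B'\) is a prefix of \(p\), and \(q\) itself is no longer internal in \(B'\). For \(p\neq q\) we need the two notions to agree:
\begin{itemize}
\item If \(p\) has a prefix in \(B\), either that prefix lies in \(B'\), or it is some \(qe_i\), in which case \(q\in B'\) is a prefix of \(p\).
\item If \(p\) has a prefix in \(B'\), either it lies in \(B\), or it is \(q\). In the latter case \(q\precneq p\), so \(p\) extends some \(qe_i\in B\) and is not internal in \(B\).
\end{itemize}
Hence \(k_{B'}=k_B-1\), and the induction hypothesis gives \(|B|=|B'|+(n-1)=(n-1)\cdot k_B+l\).

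The main obstacle is this bookkeeping: making sure the set of internal paths changes by exactly \(\{q\}\), in particular when \(p\) is a proper extension of \(q\). This also requires care with the multiset origins \(\mathbf{w}\in l\cdot v\) in the base case.
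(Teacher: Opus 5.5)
Your proof is correct, but you run the induction in the opposite direction from the paper. The paper argues bottom-up. It checks the claim for the trivial basis \(\{\varepsilon_{\mathbf{w}}\mid \mathbf{w}\in l\cdot v\}\), then observes that passing from \(B\) to an expansion \(B^p\) adds \(n-1\) elements and exactly one new internal path, namely \(p\).

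You argue top-down. You pick a maximal-length internal path \(q\) and show that the \(qe_i\) are precisely the elements of \(B\) above \(q\). You then contract them to a basis \(B'\) with \(B=(B')^q\), and check that the internal paths of \(B'\) are those of \(B\) with \(q\) removed. Your counting step is the mirror image of the paper's, but your direction gains something. As written, the paper's induction only reaches bases obtained from the trivial basis by finitely many expansions, and it tacitly assumes every basis of \(\mathcal{P}(G_n,l\cdot v)\) has this form. Your contraction step proves exactly that fact along the way, by iterating \(B=(B')^q\) down to \(k_B=0\). So your argument covers arbitrary bases with no extra input.

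The paper's version is shorter and foregrounds expansions, the tool used throughout the section. Yours is self-contained, at the cost of the maximal-length argument needed to see that all children \(qe_i\) of \(q\) already lie in \(B\).
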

\begin{proof}
	We will show this inductively by first showing it for the trivial basis and if it is true for some basis \(B\) it is also true for \(B^p\), for each \(p\in B\). 

	In the case of the trivial basis \(B_0:=\{\varepsilon_{\mathbf{u}}\mid \mathbf{u}\in l\cdot v\}\) we note that \(k_{B_0}=0\) as \(|B_0|=l\) and \(B_0\) has no internal paths as each path has a prefix in \(B_0\). Now assume the lemma holds for some basis \(B\subseteq \mathcal{P}(G_n,l\cdot v)\), then for any \(p\in B\) we have
		\[|B^p|=|B|+(n-1)=(k_B+1)\cdot(n-1)+l,\]
	showing that \(k_{B^p}=k_B+1\). Additionally, we note that if \(q\) is internal in \(B^p\) it is either internal in \(B\) or equal to \(p\) (which is not internal in \(B\)), so there are \(k_B+1=k_{B^p}\) internal paths in \(B^p\).
\end{proof}

So if we have a floating gluing diagram \(\mathfrak{G}=(l\cdot v,C_e,\gamma_e)_{e\in EG_n}\) then since
	\[|B_v|=\sum_{i=0}^{n-1}|C_{e_i}|=n\cdot l=l\cdot(n-1)+1,\]
by \lemref{B+-is-basis}, \(B_v\) has \(l\) internal paths. We can thus take a bijection
	\[\rho:\{e_n,\dots,e_{n+l-1}\}\to\{p\in \mathcal{P}(G_n,l\cdot v)\mid p\text{ internal in } B_v\}\]
and a bijection \(\gamma_+: \{e_n,\dots,e_{n+l-1}\}\to l\cdot v\) and define \(\mathfrak{G}^+:=(l\cdot v,C_e,\gamma_e)_{e\in EG_{n+l}}\) by setting	
	\begin{itemize}
		\item \(C_{e_j}=C^{\rho(e_j)+}\) for each \(n\leq j < n+l\)
		\item  \(\gamma_{e_j}(\rho(e_j)e_{j'})=\gamma_+(e_{j'})\) for any \(n\leq j,j' < n+l\). 
	\end{itemize}
We  note that we have 
	\[B^{\mathfrak{G}^+}_v=(B^{\mathfrak{G}}_v)^{+}\]
for each \(v\in VG\). 
Since \(\mathfrak{G}^+\) connects \(G_{n+l}\) to itself and has \(x_v=l\cdot v\) all that remains to show is that if \(\mathfrak{G}\) is shift-surjective, then so is \(\mathfrak{G}^+\). For this we first note the following result.

\begin{lemma}\label{+bases-lem}
	For any floating gluing diagram \(\mathfrak{G}=(l\cdot v,C_e,\gamma_e)_{e\in EG_n}\) connecting \(G_n\) to itself, and any basis \(B\subseteq \mathcal{P}(G_n,v)\) we have a bijection 
		\[\rho_B:B^+\setminus B\to \{p\in\mathcal{P}(G_n,l\cdot v)\mid p \text{ internal in } \bigsqcup_{b\in B} C_b\},\]
	s.t. in the gluing diagram \(\mathfrak{G}^+\)
		\[C_{q}=\{\rho_B(q)e_j\mid n\leq j< n+l\}\]
	for any \(q\in B^+\setminus B\) and 
		\[\gamma_q(\rho_B(q)e_j)=\gamma_+(e_j)\]
	for any \(n\leq j< n+l\).

\end{lemma}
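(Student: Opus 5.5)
Before the induction, let me fix how to read the statement. \(B\) is a basis of \(\mathcal{P}(G_n,v)\), and \(B^+\) is formed by adding, for each \(B\)-internal \(r\in\mathcal{P}(G_n,v)\), the paths \(re_j\) with \(n\le j<n+l\). The iterated sets \(C_q\) are computed in \(\mathfrak{G}^+\), starting from \(C_{\varepsilon_v}=\{\varepsilon_{\mathbf{w}}\mid \mathbf{w}\in l\cdot v\}\). A preliminary observation is that for \(q\in\mathcal{P}(G_n,v)\) the gluing in \(\mathfrak{G}^+\) only uses the old sets \(C_{e_i}\) with \(i<n\). Hence \(C_q\) and \(\gamma_q\) coincide in \(\mathfrak{G}\) and \(\mathfrak{G}^+\), and \(\bigsqcup_{b\in B}C_b\) is the same iterated basis in both.

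I will prove the lemma by induction on the number of expansions needed to reach \(B\) from the trivial basis \(\{\varepsilon_v\}\). The base case is \(B=\{\varepsilon_v\}\). Then \(B^+=B\), and \(\bigsqcup_{b\in B}C_b=\{\varepsilon_{\mathbf{w}}\}\) has no internal paths, so \(\rho_B\) is the empty bijection.

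For the inductive step, pass from \(B\) to \(B^p\) with \(p\in B\). Then \(p\) becomes internal in \(B^p\), so
\[(B^p)^+\setminus B^p=(B^+\setminus B)\sqcup\{pe_j\mid n\le j<n+l\}.\]
On the other side, \(\bigsqcup_{b\in B^p}C_b=(\bigsqcup_{b\in B}C_b)\circ_{\gamma_p}B_v\) is obtained by replacing \(C_p\) with \(\gamma_p^{-1}(B_v)\). Its internal paths are therefore the old internal paths together with the paths \(\gamma_p^{-1}(s)\) for \(s\) internal in \(B_v\). The latter are exactly the \(l\) paths \(\gamma_p^{-1}(\rho(e_j))\); here I use the definition of internal for the extended map \(\gamma_p\), together with the fact that \(\gamma_p^{-1}\) is a prefix-preserving bijection from \(\mathcal{P}(H,l\cdot v)\) onto \(\mathcal{P}(H,S)_{C_p}\). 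I then define
\[\rho_{B^p}|_{B^+\setminus B}:=\rho_B,\qquad \rho_{B^p}(pe_j):=\gamma_p^{-1}(\rho(e_j)).\]
The new internal paths all extend \(C_p\), while the old ones do not lie under \(C_p\), since \(C_p\subseteq\bigsqcup_b C_b\) is independent. So the two parts of the target are disjoint, and \(\rho_{B^p}\) is a bijection.

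It remains to verify the formulas for \(C_q\) and \(\gamma_q\) at the new paths \(q=pe_j\). By definition,
\[C_{pe_j}=C_p\circ_{\gamma_p}C_{e_j}=\gamma_p^{-1}\bigl(\{\rho(e_j)e_{j'}\mid n\le j'<n+l\}\bigr)=\{\gamma_p^{-1}(\rho(e_j))e_{j'}\}.\]
The last equality holds because the extended \(\gamma_p\) satisfies \(\gamma_p(rs)=\gamma_p(r)s\). Moreover
\[\gamma_{pe_j}(\gamma_p^{-1}(\rho(e_j))e_{j'})=\gamma_{e_j}(\rho(e_j)e_{j'})=\gamma_+(e_{j'}).\]
For the old paths \(q\in B^+\setminus B\) nothing changes. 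Finally, since every basis of \(\mathcal{P}(G_n,v)\) arises from \(\{\varepsilon_v\}\) by finitely many expansions, the induction covers all \(B\). For \(G_n\), bases of \(\mathcal{P}(G_n,v)\) are exactly the finite complete prefix codes, which are obtained by repeated expansion.

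I expect the main obstacle to be the bookkeeping of internal paths under gluing. Specifically, I must show that \(s\) is internal in \(B_v\) if and only if \(\gamma_p^{-1}(s)\) is internal in the glued basis, and that no old internal path becomes non-internal. The latter fails only if an old internal path lies under the new basis, which cannot happen because the new basis lies under the old one. The former I would handle by noting that \(\gamma_p^{-1}\) preserves and reflects \(\preceq\) on its domain, and that a path \(r\) with \(C_p\preceq r\) lies under the glued basis exactly when \(\gamma_p(r)\) lies under \(B_v\).
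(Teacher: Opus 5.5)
Your proof is correct and follows the paper's argument essentially step for step. Both use induction on expansions starting from \(\{\varepsilon_v\}\), the decomposition \((B^p)^+\setminus B^p=(B^+\setminus B)\sqcup\{pe_j\mid n\leq j<n+l\}\), the definition \(\rho_{B^p}(pe_j)=\gamma_p^{-1}(\rho(e_j))\), and the same computation of \(C_{pe_j}\) and \(\gamma_{pe_j}\). Your remarks that \(C_q,\gamma_q\) agree in \(\mathfrak{G}\) and \(\mathfrak{G}^+\) for \(q\in\mathcal{P}(G_n,v)\), and that the old and new internal paths are disjoint, spell out points the paper leaves implicit. One small slip: in this floating setting \(\gamma_p^{-1}\) lands in \(\mathcal{P}(G_n,l\cdot v)_{C_p}\), not in \(\mathcal{P}(H,S)_{C_p}\).
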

\begin{proof}
	We will proceed via induction. Firstly we note that for \(B=\{\varepsilon_v\}\), we have \(B=B^+\) and since \(C_{\varepsilon_v}=\{\varepsilon_{\mathbf{v}}\mid \mathbf{v}\in l\cdot v\}\subseteq\mathcal{P}(G_{n+l},l\cdot v)\) has no internal paths the empty function gives us the required bijection.
	For the induction step we assume we have \(\rho_B\) for some base \(B\) and use it to define \(\rho_{B^p}\). For this we note that since the paths that are internal to \(B^p\) are exactly the paths internal to \(B\), or equal to \(p\), we have
		\[(B^p)^+\setminus B^p=B^+\setminus B\cup \{pe_j\mid n\leq j< n+l\}. \]
	We can thus define \(\rho_{B^p}\) by setting \(\rho_{B^p}|_{B^+\setminus B}=\rho_B\) and 
		\[\rho_{B^p}(pe_j)=\gamma^{-1}_p(\rho(e_j))\]
	for any \(n\leq j <n+l\) where we view \(\gamma_p\) as a function from \(\mathcal{P}(G_{n+l},v)_{C_p}\) to \(\mathcal{P}(G_{n+l},l\cdot v)\). Since \(\gamma_p\) is a prefix respecting bijection, we note that since \(B_v\not\preceq\rho(e_j)\) we have \(C_p\circ_{\gamma_p} B_{T(p)}\not\preceq \gamma^{-1}_p(\rho(e_j))\). Additionally, since \(C_p\preceq \gamma^{-1}_p(\rho(e_j))\), we must also have \(C_q\not\preceq \gamma^{-1}_p(\rho(e_j))\) for any \(p\neq q\in B\) and thus
		\[\bigcup_{q\in B^p}C_q=\bigcup_{q\in B\setminus \{p\}} C_q\cup (C_p\circ_{\gamma_p} B_{T(p)})\not\preceq \gamma^{-1}_p(\rho(e_j))\]
	i.e. \(\rho_{B^p}(pe_j)\) is internal in \(\bigcup_{q\in B^p}C_q\). As any path \(r\) that is internal in  \(\bigcup_{q\in B^p}C_q\) is either internal in \(\bigcup_{q\in B}C_q\) or satisfies 
	\begin{multline*}
		C_p\preceq r \text{ and }C_p\circ_{\gamma_p} B_v\not\preceq r\iff B_v\not\preceq \gamma_p(r)\iff\exists n\leq j<n+j,\ \gamma_p(r)=\rho(e_j)\\ \iff\exists n\leq j<n+j,\ r=\gamma_p^{-1}(\rho(e_j))= \rho_{B^p}(pe_j)
	\end{multline*}
		 
	and \(\rho_B,\rho\) are bijections, we can see that \(\rho_{B^p}\) is also a bijection. 

	Now we note that for any \(q\in (B^p)^+\setminus B^p\) with \(p\not\preceq q\) we have \(q\in B^+\setminus B\) and thus by induction 
		\[C_q=\{\rho_B(q)e_j\mid n\leq j< n+l\}=\{\rho_{B^p}(q)e_j\mid n\leq j< n+l\}\]
	and \(\gamma_q(\rho_{B^p}(q)e_j)=\gamma_q(\rho_B(q)e_j)=\gamma_+(e_j)\).
	On the other hand any \(q\in (B^p)^+\setminus B^p\) with \(p\preceq q\) is of the form \(pe_j\) for some \(n\leq j < n+l\) and thus
	\begin{multline*}
		C_q=C_{pe_j}=C_p\circ_{\gamma_p} C_{e_j}=\gamma^{-1}_p(\{\rho(e_j)e_k\mid n\leq k < n+l \})=\\
		\{\gamma^{-1}_p(\rho(e_j))e_k\mid n\leq k < n+l \}=\{\rho_{B^p}(pe_j)e_k\mid n\leq k < n+l \}
	\end{multline*}
	and for each \(n\leq k < n+l\)
		\[\gamma_{pe_j}(\rho_{B^p}(pe_j)e_k)=\gamma_{pe_j}(\gamma_p^{-1}(\rho(e_j)e_k))=\gamma_{e_j}(\rho(e_j)e_k)=\gamma_+(e_k).\]

\end{proof}

This gives us the following corollary
 
\begin{cor}\label{+bases-cor}
	For any iterated basis \(B\subseteq \mathcal{P}(G_n,l\cdot v)\) of some gluing diagram \(\mathfrak{G}\), \(B^+\) is an iterated basis of \(\mathfrak{G}^+\). 
\end{cor}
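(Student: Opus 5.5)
The plan is to apply \lemref{+bases-lem} directly to the basis \(B'\) that generates the given iterated basis, and then identify \(B^+\) with the iterated basis of \(\mathfrak{G}^+\) coming from the basis \((B')^+\) of \(\mathcal{P}(G_{n+l},v)\).

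\textbf{Setup.} Write \(B=\bigsqcup_{r\in B'}C_r\), where \(B'\subseteq\mathcal{P}(G_n,v)\) is a basis and the \(C_r\) are computed in \(\mathfrak{G}\). Here \(B'^+\) means the analogous construction for the trivial floating structure, that is, \(B'\) together with \(\{pe_j\mid n\leq j<n+l\}\) for each \(p\) internal in \(B'\). This is a basis of \(\mathcal{P}(G_{n+l},v)\) by the same argument as \lemref{B+-is-basis}. The candidate iterated basis of \(\mathfrak{G}^+\) is therefore
\[\bigsqcup_{q\in (B')^+}C^{\mathfrak{G}^+}_q.\]

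\textbf{Step 1: the paths of \(B'\).} For \(r\in B'\subseteq\mathcal{P}(G_n,v)\), only the edges \(e_0,\dots,e_{n-1}\) are used, and \(\mathfrak{G}^+\) agrees with \(\mathfrak{G}\) on those edges. So a straightforward induction on \(|r|\) gives \(C^{\mathfrak{G}^+}_r=C^{\mathfrak{G}}_r\) and \(\gamma^{\mathfrak{G}^+}_r=\gamma^{\mathfrak{G}}_r\), now viewed inside \(\mathcal{P}(G_{n+l},l\cdot v)\).

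\textbf{Step 2: the new paths.} For \(q\in(B')^+\setminus B'\), \lemref{+bases-lem} gives
\[C^{\mathfrak{G}^+}_q=\{\rho_{B'}(q)e_j\mid n\leq j<n+l\},\]
where \(\rho_{B'}\) is a bijection onto the paths internal in \(B\). Taking the union over all \(q\) yields exactly \(\bigsqcup_{p \text{ internal in } B}C^{p+}\).

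\textbf{Conclusion and main obstacle.} Combining the two steps gives \(\bigsqcup_{q\in(B')^+}C_q=B\cup\bigsqcup_{p\text{ internal}}C^{p+}=B^+\), which is an iterated basis of \(\mathfrak{G}^+\). The main point to check is that the basis \(B'\) used in \lemref{+bases-lem} is an honest basis of \(\mathcal{P}(G_n,v)\), and that the lemma's set \(B^+\setminus B\) denotes the same set of paths as \((B')^+\setminus B'\) here. Both facts follow from the inductive construction in that lemma's proof, which builds \(B'\) by expansions starting from \(\{\varepsilon_v\}\).
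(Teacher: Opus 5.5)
Your proposal is correct and follows the paper's proof. You write \(B=\bigsqcup_{r\in B'}C_r\), apply \lemref{+bases-lem} to \(B'\) so that the new sets \(C_q\) for \(q\in(B')^+\setminus B'\) are exactly the \(C^{p+}\) over paths \(p\) internal in \(B\) via the bijection \(\rho_{B'}\), and conclude \(\bigsqcup_{q\in(B')^+}C_q=B^+\). Your Step 1 (that \(C_r\) is unchanged for \(r\in B'\)) and your check that \((B')^+\) is a basis of \(\mathcal{P}(G_{n+l},v)\) are details the paper leaves implicit, so they are welcome additions rather than a different route.
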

\begin{proof}
	Take a basis \(B'\subseteq \mathcal{P}(G,v)\) s.t. \( B=\bigcup_{p\in B'} C_p\), we then have using \lemref{+bases-lem}
		\begin{multline*}
		\bigcup_{p\in (B')^+}C_p=B\cup \bigcup_{p\in (B')^+\setminus B'} C_p=B\cup\{\rho_B(p)e_j\mid p\in (B')^+\setminus B',\ n\leq j< n+l\}\\
		=B\cup \{qe_j\mid q\text{ internal in } B,\ n\leq j< n+l\}=B^+,
		\end{multline*}
	making \(B^+\) an iterated basis.
\end{proof}

\begin{lemma}\label{surj+-lem}
	For any surjective gluing diagram \(\mathfrak{G}\) from \(G_{n}\) to itself with \(x_v=l\cdot v\), the floating gluing diagram \(\mathfrak{G}^+\) is also surjective.
\end{lemma}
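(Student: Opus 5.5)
Surjectivity of a floating diagram means two things: it is unblocked, and for every \(\mathbf{u}\in l\cdot v\) there is a \(\mathbf{u}\)-splitting iterated basis of \(\mathcal{P}(G_{n+l},l\cdot v)\). I will check both for \(\mathfrak{G}^+\). The idea is to transport the splitting iterated bases of \(\mathfrak{G}\) to \(\mathfrak{G}^+\) via \corref{+bases-cor}, and to observe that the new edges \(e_n,\dots,e_{n+l-1}\) cannot create blocking.

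\emph{Unblockedness.} In \(\mathfrak{G}^+\) the sets \(C_{e_j}\) for \(j<n\) are unchanged, so any blocking relation coming from them is already present in \(\mathfrak{G}\). For \(n\leq j<n+l\), \(C_{e_j}=C^{\rho(e_j)+}\) consists of paths \(\rho(e_j)e_{j'}\), which are non-empty. So \(\varepsilon_{\mathbf{w}}\notin C_{e_j}\), and these edges contribute no blocking. Hence every blocking cycle of \(\mathfrak{G}^+\) is a blocking cycle of \(\mathfrak{G}\). Since \(\mathfrak{G}\) is unblocked and \(v\) is not a sink, no blocking cycle exists in \(\mathfrak{G}\), and therefore none exists in \(\mathfrak{G}^+\).

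\emph{Splitting bases.} Fix \(\mathbf{u}\in l\cdot v\). By assumption there is a basis \(B'\subseteq\mathcal{P}(G_n,v)\) whose iterated basis \(B=\bigsqcup_{r\in B'}C_r\) is \(\mathbf{u}\)-splitting. That is, some \(J\subseteq B'\) has \(\bigsqcup_{r\in J}C_r\) exactly covering \(\varepsilon_{\mathbf{u}}\) in \(\mathcal{P}(G_n,l\cdot v)\). By \corref{+bases-cor}, \(B^+=\bigsqcup_{r\in (B')^+}C_r\) is an iterated basis of \(\mathfrak{G}^+\); here we use that the iterated sets \(C_r\) for \(r\in\mathcal{P}(G_n,v)\) are the same in \(\mathfrak{G}\) and \(\mathfrak{G}^+\), because the gluing along old edges is unchanged. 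As the index set for \(\varepsilon_{\mathbf{u}}\) I take
\[J^+:=J\cup\{q\in (B')^+\setminus B'\mid \varepsilon_{\mathbf{u}}\preceq\rho_{B'}(q)\},\]
using the bijection \(\rho_{B'}\) of \lemref{+bases-lem}. It remains to show that \(\bigcup_{r\in J^+}C_r\) exactly covers \(\varepsilon_{\mathbf{u}}\) in \(\mathcal{P}(G_{n+l},l\cdot v)\). The argument mirrors \lemref{B+-is-basis}. Take \(p\) with \(\varepsilon_{\mathbf{u}}\preceq p\), and let \(q\) be its longest prefix lying in \(\mathcal{P}(G_n,l\cdot v)\). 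If \(q=p\), exact covering by \(\bigsqcup_{r\in J}C_r\) in \(G_n\) gives an extension of \(p\) lying under that set. If \(q\neq p\), then \(qe_{j}\preceq p\) with \(j\geq n\). Either \(B\preceq q\), in which case the prefix in \(B\) starts with \(\mathbf{u}\) and so belongs to some \(C_r\) with \(r\in J\). Or \(q\) is internal in \(B\), in which case \(qe_j\in C_{\rho_{B'}^{-1}(q)}\) and \(\rho_{B'}^{-1}(q)\in J^+\). Conversely, every element of \(\bigcup_{r\in J^+}C_r\) starts at \(\mathbf{u}\). Together these give exact covering.

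The main obstacle is the bookkeeping. One must verify that the iterated \(C_r\) in \(\mathfrak{G}^+\) agree with those of \(\mathfrak{G}\) for \(r\) avoiding the new edges. One must also check that the exact-cover condition in the larger graph holds for internal paths that are prefixes of \(\varepsilon_{\mathbf{u}}\)-descendants. I would handle the first point by induction on \(|r|\) from the definition \(C_{re}=C_r\circ_{\gamma_r}C_e\), and the second by the longest-old-prefix argument above.
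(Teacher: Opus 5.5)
Your proof is correct and follows the paper's argument essentially step for step. For unblockedness you use the same observation: the new sets \(C_{e_j}\), \(j\geq n\), contain no empty paths, so every blocking relation in \(\mathfrak{G}^+\) already holds in \(\mathfrak{G}\). For the splitting bases you use exactly the paper's index set \(J\cup\{q\in(B')^+\setminus B'\mid\varepsilon_{\mathbf{u}}\preceq\rho_{B'}(q)\}\) together with \corref{+bases-cor} and \lemref{+bases-lem}. The only difference is cosmetic: you verify exact covering directly with the longest-old-prefix argument of \lemref{B+-is-basis}, whereas the paper identifies this union with the part of the basis \(B^+\) lying under \(\varepsilon_{\mathbf{u}}\).
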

\begin{proof}
	To show that \(\mathfrak{G}^+\) is unblocked, we will show that for any \(\mathbf{u},\mathbf{w}\in l\cdot v\) s.t. \(\mathbf{w}\) blocks \(\mathbf{u}\) in \(\mathfrak{G}^{+}\), \(\mathbf{w}\) blocks \(\mathbf{u}\) in \(\mathfrak{G}\) as well. To see this observe that for any \(n\leq j < n+l\) we have \(C_{e_j}= \{\rho(e_j)e_k\mid n\leq k<n+l\} \) and thus \(\varepsilon_{\mathbf{u}}\notin C_{e_j}\). Thus, we must have some \(0\leq k <n\) s.t. \(\varepsilon_{\mathbf{u}}\in C_{e_k}\) and since \(\gamma_{e_k}(\varepsilon_{\mathbf{u}})=\mathbf{w}\) we see that \(\mathbf{w}\) blocks \(\mathbf{u}\) in \(\mathfrak{G}\).
	
	To construct for any \(\mathbf{u}\in l\cdot v\), a \(\mathbf{u}\)-splitting iterated basis, we first take a basis \(B'\subseteq\mathcal{P}(G_n,v)\) s.t. \(B:=\bigcup_{p\in B'} C_p\) is \(\mathbf{u}\)-splitting. We will show that \(B^+\) is also splitting. For this we note that if we take \(D_{\mathbf{u}}\subseteq B'\) s.t. \(\bigcup_{p\in D_{\mathbf{u}}}C_p\) covers \(\varepsilon_{\mathbf{u}}\) exactly in \(\mathcal{P}(G_n,l\cdot v)\). We then note that for \(D^{+}_{\mathbf{u}}:=D_{\mathbf{u}}\cup\{p\in (B')^+\setminus B'\mid \varepsilon_{\mathbf{u}}\preceq\rho_{B'}(p)\}\) we have 
		\[\bigcup_{p\in D^+_{\mathbf{u}}}C_p=\bigcup_{p\in D_{\mathbf{u}}}C_p\cup \{pe_j\mid p\text{ is internal in } B, \varepsilon_{\mathbf{u}}\preceq p\}.\]
	This set covers \(\varepsilon_{\mathbf{u}}\) exactly since it clearly lies under it and 
		\[\{p\in B^+\mid \varepsilon_{\mathbf{u}}\preceq p\}=\bigcup_{p\in D_{\mathbf{u}}}C_p\cup \{pe_j\mid p\text{ is internal in } B, \varepsilon_{\mathbf{u}}\preceq p\}.\]

\end{proof}

\begin{lemma}\label{shift-surj+-lem}
	For any shift-surjective floating gluing diagram \(\mathfrak{G}\) from \(G_{n}\) to itself with \(x_v=l\cdot v\), the floating gluing diagram \(\mathfrak{G}^+\) is also shift-surjective.
\end{lemma}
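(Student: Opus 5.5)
Throughout, $B_v$ and $B_v^{+}$ are the partitioned bases of $\mathfrak{G}$ and $\mathfrak{G}^{+}$; this is consistent with the identity $B^{\mathfrak{G}^+}_v=(B^{\mathfrak{G}}_v)^{+}$ noted before \lemref{+bases-lem}. Surjectivity of $\mathfrak{G}^+$ is already given by \lemref{surj+-lem}. So I will check the criterion of \thmref{shift-surj-thm} for $\mathfrak{G}^+$: for any $p,q\in\mathcal{P}(G_{n+l},l\cdot v)$ with $T(p)=T(q)$ that are internal in $B_v^{+}$, the diagram $\mathfrak{G}^+$ enables the shift from $p$ to $q$. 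There is only one vertex, so every terminus condition is automatic.

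The first step is to locate the internal paths of $B_v^{+}$. By construction, a path is internal in $B_v^{+}$ exactly when it is internal in $B_v$. Paths of the form $re_j$ with $r$ internal and $j\ge n$ lie in $B_v^+$, and every other new path has a prefix in $B_v^+$. Hence $p,q\in\mathcal{P}(G_n,l\cdot v)$, and both are internal in $B_v$.

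Since $\mathfrak{G}$ is shift-surjective, \thmref{shift-surj-thm} applies to it. So $\mathfrak{G}$ enables the shift from $p$ to $q$ via independent sets $D_p,D_q\subseteq\mathcal{P}(G_n,v)$ and a bijection $\nu:D_p\to D_q$. By \rmkref{exp-enable-rmk} I may expand until $D_p$ and $D_q$ lie in bases $B'_p,B'_q$ of $\mathcal{P}(G_n,v)$; this makes \lemref{+bases-lem} available. The candidate data for $\mathfrak{G}^+$ are
\[D_p^+:=D_p\cup\{s\in (B'_p)^+\setminus B'_p\mid p\preceq \rho_{B'_p}(s)\},\]
defined analogously for $q$. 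By \lemref{+bases-lem} and the argument in the proof of \lemref{surj+-lem}, the union of the $C_s$ over $s\in D_p^+$ equals
\[\textstyle\bigcup_{s\in D_p}C_s\;\cup\;\{re_j\mid r \text{ internal in } \bigcup_{b\in B'_p}C_b,\ p\preceq r,\ n\le j<n+l\},\]
and this covers $p$ exactly in $\mathcal{P}(G_{n+l},l\cdot v)$.

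Next I extend $\nu$ to $\nu^+:D_p^+\to D_q^+$. The old gluing data send $pr\mapsto qr$ on the sets $C_s$ with $s\in D_p$, together with labels. Claim: this correspondence maps the internal paths above $p$ bijectively onto the internal paths above $q$. The reason is that a path $pr$ is internal in $\bigcup_{s\in D_p}C_s$ exactly when no $C_s$ lies below it. That condition is transported by the enabling correspondence, because it is decided by which $C_s$ contain extensions of $pr$, and that membership is preserved with labels. Then I set $\nu^+(s):=\tilde s$ for $s\in D_p^{+}\setminus D_p$, where $\rho_{B'_q}(\tilde s)=q r$ whenever $\rho_{B'_p}(s)=p r$; \lemref{+bases-lem} makes $\tilde s$ unique. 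The condition of \defref{defn-enable} on the new pieces then holds directly: $C_s=\{pre_j\}$ and $C_{\nu^+(s)}=\{qre_j\}$, with $\gamma_s(pre_j)=\gamma_+(e_j)=\gamma_{\nu^+(s)}(qre_j)$. On the old pieces it is inherited from $\nu$.

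The main obstacle is the internal-path correspondence claim, namely that the shift $pr\mapsto qr$ carries paths internal to the covering above $p$ onto those internal above $q$. I would prove it by writing each internal $pr$ as a proper prefix of some element $pr'\in C_s$. The image $qr'$ then lies in $C_{\nu(s)}$, so $qr$ is a proper prefix of an element of the covering. Independence of the covering then rules out any element lying below $qr$. The argument is symmetric using $\nu^{-1}$, which gives the bijection.
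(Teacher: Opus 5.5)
Your proposal is correct and follows the paper's proof essentially step for step: you use the same enlarged sets $D_p^+,D_q^+$ obtained via \lemref{+bases-lem}, the same extension $\nu^+$ sending $s$ with $\rho_{B_p}(s)=pr$ to $\rho_{B_q}^{-1}(qr)$, and the same check of the enabling condition separately on the old pieces $C_s$, $s\in D_p$, and on the new pieces $\{pre_j\}$. Your explicit argument that $pr\mapsto qr$ maps the internal paths above $p$ bijectively onto those above $q$ supplies a point the paper uses silently when it writes $\rho_{B_q}^{-1}(qr)$. Since the definition of enabling is one-sided, the reverse direction needs the converse implication; this follows from exact covering plus independence, or from the count $|\bigcup_{s\in D_p}C_s|=l|D_p|=l|D_q|=|\bigcup_{s\in D_q}C_s|$.
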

\begin{proof}

	Take some \(p,q\) internal in \(B^+_v\) and note that they must also be internal in \(B_v\). Since \(\mathfrak{G}\) is shift surjective, we have independent \(D_p,D_q\subseteq\mathcal{P}(G,v)\) and terminus-maintaining bijection \(\nu_{p,q}:D_p\to D_q\) that enable the shift from \(p\) to \(q\). Take bases \(B_p,B_q\subseteq\mathcal{P}(G_n,v)\) with \(D_p\subseteq B_p,D_q\subseteq B_q\) we will define 
		\begin{align*}
			D^+_p:&=D_p\cup \{\tilde{p}\in B^+_p\setminus B_p\mid p\preceq\rho_{B_p}(\tilde{p})\}\\
			D^+_q:&=D_q\cup \{\tilde{q}\in B^+_q\setminus B_q\mid q\preceq\rho_{Bq}(\tilde{q})\}\\
			\nu^+_{p,q}:&\begin{cases}
							D^+_p\to D^+_q\\
							\tilde{p}\mapsto\begin{cases}
												\nu_{p,q}(\tilde{p}),& \tilde{p}\in D_p\\
												\rho_{B_q}^{-1}(qr),& \tilde{p}\in D^+_p\setminus D_p, r\in \mathcal{P}(G_{n+l},v), \text{ s.t. } \rho_{B_p}(\tilde{p})=pr
											\end{cases}
						\end{cases}
		\end{align*}
	and show that they enable the shift from \(p\) to \(q\) in \(\mathfrak{G}^+\). In the same way as in \lemref{surj+-lem} we can see that \(\bigcup_{\tilde{p}\in D^{+}_p} C_{\tilde{p}}\) and \(\bigcup_{\tilde{q}\in D^{+}_q} C_{\tilde{q}}\) exactly cover \(p\) and \(q\) respectively. Now if we have some \(\tilde{p}\in D^+_p\) and some \(r\in\mathcal{P}(G_{n+l},v)\) s.t \(pr\in C_{\tilde{p}}\). If \(\tilde{p}\in D_p\), then \(r\in\mathcal{P}(G_{n},v)\) and since \(D_p,D_q,\nu_{p,q}\) enable the shift from \(p\) to \(q\), we have \(qr\in C_{\nu_{p,q}(\tilde{p})}=C_{\nu^+_{p,q}(\tilde{p})}\) and additionally
		\[\gamma_{\tilde{p}}(pr)=\gamma_{\nu_{p,q}(\tilde{p})}(qr)=\gamma_{\nu^+_{p,q}(\tilde{p})}(qr).\]
	In the opposite case where \(\tilde{p}\in D^{+}_p\setminus D_p\) then if we write \(\rho_{B_p}(\tilde{p})=pr^-\) for some \(r^-\in\mathcal{P}(G_n,v)\). Since we have \(pr\in C_{\tilde{p}}\), we must have some \(n\leq j<n+l\) s.t. \(r=r^-e_j\) and since \(qr=qr^-e_j\in C_{\rho_{B_q}^{-1}(qr^{-})}\), we must have \(qr\in C_{\nu_{p,q}(\tilde{p})}\) by definition. Additionally, we have
		\[\gamma_{\tilde{p}}(pr)=\gamma_{\tilde{p}}(\rho_{B_p}(\rho_{B_p}^{-1}(pr^{-}))e_j)=\gamma_+(e_j)=\gamma_{\nu_{p,q}(\tilde{p})}(\rho_{B_q}(\rho_{B_q}^{-1}(qr^{-}))e_j)=\gamma_{\nu_{p,q}(\tilde{p})}(qr).\]
	This shows that \(\mathfrak{G}^+\) enables the shift from \(p\) to \(q\) and since \(p,q\) were arbitrary paths internal in \(B^+_v\), \(\mathfrak{G}^+\) is shift-surjective.

\end{proof}

We can now combine the above lemmas to get the following.

\begin{lemma}\label{hard-impl-lem}
	If \((l,n)\) reachable so is \((l,n+l)\).
\end{lemma}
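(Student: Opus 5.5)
The plan is to take a floating gluing diagram \(\mathfrak{G}=(l\cdot v,C_e,\gamma_e)_{e\in EG_n}\) witnessing that \((l,n)\) is reachable, i.e. one connecting \(G_n\) to itself with \(x_v=l\cdot v\) that is shift-surjective. From it we form the diagram \(\mathfrak{G}^+=(l\cdot v,C_e,\gamma_e)_{e\in EG_{n+l}}\) described before \lemref{+bases-lem}.

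The first step is to check that \(\mathfrak{G}^+\) is a well-defined floating gluing diagram connecting \(G_{n+l}\) to itself. For this we need that \(B_v\) has exactly \(l\) internal paths, so that the bijection \(\rho\) exists. This follows from the counting lemma on internal paths: \(|B_v|=n\cdot l=(n-1)\cdot l+l\) gives \(k_{B_v}=l\).

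Next we check the defining conditions of \defref{sec2-def1} for \(\mathfrak{G}^+\).
\begin{itemize}
\item The new sets \(C_{e_j}=C^{\rho(e_j)+}\), for \(n\le j<n+l\), are independent of size \(l\) and disjoint from each other and from the old \(C_{e_i}\). Disjointness holds because their elements end in a new edge \(e_k\) with \(k\ge n\), and \(\rho\) is injective.
\item Each \(\gamma_{e_j}\) is a terminus-maintaining bijection onto \(l\cdot v\), since every underlying vertex is \(v\).
\item The union \(B^{\mathfrak{G}^+}_v=(B_v)^+\) is a basis by \lemref{B+-is-basis}.
\end{itemize}

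The second step is to establish that \(\mathfrak{G}^+\) is shift-surjective. Being shift-surjective presupposes being surjective, and \(\mathfrak{G}\) is surjective as part of being reachable. So \lemref{surj+-lem} makes \(\mathfrak{G}^+\) unblocked and supplies \(\mathbf{u}\)-splitting iterated bases via \corref{+bases-cor}. \Lemref{shift-surj+-lem} then gives that \(\mathfrak{G}^+\) enables the shift between any two paths internal in \(B^+_v\). By the characterisation in \thmref{shift-surj-thm}, this is exactly the floating criterion, since \(VG_{n+l}\) has the single vertex \(v\). Hence \((l,n+l)\) is reachable.

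The main obstacle is the precise meaning of ``shift-surjective'' for a floating diagram. The paper uses this notion through the internal-path criterion of \thmref{shift-surj-thm}, as in \lemref{easy-impl-lem}, where the floating diagram was rooted by attaching a basis of \(\mathcal{P}(G_{l,n},R)\). If the definition is meant in the rooted sense, I would root \(\mathfrak{G}^+\) as in \lemref{easy-impl-lem}. That is, take \(B_R=\{d_0,\dots,d_{l-1}\}\subseteq\mathcal{P}(G_{l,n+l},R)\) with a bijection onto \(l\cdot v\) and \(C_{\varepsilon_R}=\{\varepsilon_R\}\). I would then check two things: that \(B_R\) is trivially splitting, and that no internal paths are added at \(R\) beyond those handled through \lemref{shift-translation-lem}, exactly as in the proof of \lemref{floating-to-rooted-lem}. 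Once that is settled, the argument reduces to citing the lemmas above.
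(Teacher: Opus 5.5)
Your proposal is correct and is essentially the paper's proof, which just combines \lemref{surj+-lem} and \lemref{shift-surj+-lem} for the diagram \(\mathfrak{G}^+\). Your well-definedness check is a useful addition, and your count \(n\cdot l=(n-1)\cdot l+l\) correctly gives \(l\) internal paths where the paper's text has a typo; the rooting fallback is unnecessary, since the paper tacitly uses the internal-path criterion of \thmref{shift-surj-thm} as the meaning of shift-surjectivity for floating diagrams.
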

\begin{proof}
	This lemma follows after combining \lemref{surj+-lem} and \lemref{shift-surj+-lem}.
\end{proof}

Finally, we can prove the following.
\begin{lemma}\label{reach-lem}
	For any \(l\geq 1\) and \(n\geq 2\), s.t. \(\gcd(l,n-1)=1\), \((l,n)\) is reachable.
\end{lemma}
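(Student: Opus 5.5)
We argue by strong induction on \(l+n\), following the Euclidean algorithm on the pair \((l,n-1)\). The moves available are \lemref{easy-impl-lem}, which turns reachability of \((l,n)\) into reachability of \((l+(n-1),n)\), and \lemref{hard-impl-lem}, which turns reachability of \((l,n)\) into reachability of \((l,n+l)\). Read backwards, these say that \((l,n)\) is reachable provided \((l-(n-1),n)\) is reachable (when \(l>n-1\)), or provided \((l,n-l)\) is reachable (when \(n-l\geq 2\)). Both reductions preserve \(\gcd(l,n-1)\): the first because \(\gcd(l-(n-1),n-1)=\gcd(l,n-1)\), and the second because \(\gcd(l,n-l-1)=\gcd(l,n-1)\). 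So it suffices to
\begin{itemize}
\item check a family of base cases, and
\item show that every coprime pair other than a base case admits one of the two reductions landing in a pair with \(l\geq 1\) and \(n\geq 2\).
\end{itemize}

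\textbf{Base case.} The natural base case is \(l=1\) with arbitrary \(n\geq 2\). Here we take the identity floating gluing diagram of \(G_n\): \(x_v=v\) and \(C_{e_i}=\{e_i\}\subseteq\mathcal{P}(G_n,v)\), with \(\gamma_{e_i}\) the unique map to \(x_v\). Then \(B_v=\{e_0,\dots,e_{n-1}\}\) is a basis and \(C_p=\{p\}\) for every path \(p\). Consequently:
\begin{itemize}
\item No \(\varepsilon_{\mathbf{u}}\) lies in any \(C_e\), so the diagram is unblocked.
\item The trivial basis \(C_{\varepsilon_v}\) is already \(\mathbf{u}\)-splitting, so the diagram is surjective.
\item The only internal path of \(B_v\) is \(\varepsilon_{\mathbf{v}}\). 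The shift from it to itself is enabled by \(D_p=D_q=\{\varepsilon_v\}\) with \(\nu\) the identity, so \thmref{shift-surj-thm} gives shift-surjectivity.
\end{itemize}
Hence \((1,n)\) is reachable for all \(n\geq 2\).

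\textbf{Induction step.} Let \(\gcd(l,n-1)=1\) with \(l\geq 2\). If \(l>n-1\), then \(l-(n-1)\geq 1\) and the pair \((l-(n-1),n)\) is smaller and coprime, so \lemref{easy-impl-lem} finishes. If \(l<n-1\), then \(n-l\geq 2\) and the pair \((l,n-l)\) is smaller and coprime, so \lemref{hard-impl-lem} finishes. The case \(l=n-1\) is impossible, since it would force \(\gcd(l,n-1)=l\geq 2\).

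\textbf{Main obstacle.} The only delicate point is making sure the descent never leaves the domain \(l\geq1,\ n\geq2\) where the lemmas apply, and that it terminates. Termination is clear because \(l+n\) strictly decreases at each step. The boundary cases are exactly those handled above: equality \(l=n-1\) is excluded by coprimality, and \(l=1\) is covered by the base case. I would also double-check that the base diagram satisfies \defref{sec2-def1}, in particular that the sets \(C_{e_i}\) are disjoint and independent, which is immediate here.
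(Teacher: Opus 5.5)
Your proof is correct and follows the paper's argument: induction on \(l+n\) along the Euclidean algorithm, using \lemref{easy-impl-lem} when \(l>n-1\) and \lemref{hard-impl-lem} when \(l<n-1\). The only difference is the base case. You verify the identity diagram directly for every \((1,n)\), which lets you exclude \(l=n-1\) by coprimality, whereas the paper checks only \((1,2)\) and reaches \((1,n)\) by repeated use of \lemref{hard-impl-lem}, with the case \(l=n-1\) collapsing to \((1,2)\).
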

\begin{proof}
	This proof we will be proceeding as in the Euclidean Algorithm. To this end we will use induction over \(n+l\). For the induction beginning we note that \((1,2)\) is reachable since the floating gluing diagram \(\mathfrak{G}=(B_v,v,C_e,\gamma_e)_{e\in EG_2}\) with 
		\[
			B_v:=\{e_1,e_2\},\quad
			C_{e_1}=\{e_1\},\quad, C_{e_2}=\{e_2\}\]
	and \(\gamma_{e_1},\gamma_{e_2}\) the only possible functions, is clearly shift surjective.

	For the induction step we distinguish \(3\) cases: \(l< n-1\),  \(n-1=l\) and \(n-1< l\).

	If \(l< n-1\), then \(n-l\geq 2\) and \(\gcd(l,n-l-1)=1\), so by the induction hypothesis \((l,n-l)\) is reachable and so by \lemref{hard-impl-lem}, \((l,n)\) is reachable.

	If \(n-1= l\), then \(n-1=l=\gcd(l,n-1)=1\), and thus we have \((l,n)=(1,2)\), the case already covered by the induction beginning.
	
	If \(n-1< l\) we note that \(l-(n-1)\geq 1\) and \(\gcd(l-(n-1),n-1)=1\) so by induction \((l-(n-1), n)\) is reachable. So by \lemref{easy-impl-lem} \((l,n)\) is reachable. 
\end{proof}

This directly gives us the main result of the section.
\begin{theorem}
	For any \(a,b\in\mathbb{Z}_{\geq 1}\) and \(n,m\in\mathbb{Z}_{\geq 2}\) if there is a shif-preserving and shift-surjective isomorphism
		\[\Gamma:\mathcal{M}_p(G_{a,n},R)\to \mathcal{M}_p(G_{a,n},R),\]
	if and only if
		\[m=n \text{ and }\gcd(a,n-1)=\gcd(b,m-1).\]
\end{theorem}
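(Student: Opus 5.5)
The statement (read with the evident typos corrected: the target is \(\mathcal{M}_p(G_{b,m},R)\)) splits into two implications, and I would prove them separately using the results already established.

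\emph{Necessity.} Suppose \(\Gamma:\mathcal{M}_p(G_{a,n},R)\to\mathcal{M}_p(G_{b,m},R)\) is a shift-preserving, shift-surjective isomorphism. I would aim to apply \lemref{first-impl-lem}, which needs three facts.
First, \(\Gamma\) is terminus-preserving, because shift-preserving implies terminus-preserving.
Second, \(\Gamma^{-1}\) is terminus-preserving. Shift-surjectivity means the induced map \(\Gamma_s\) is an isomorphism of shift pseudogroups. So for every elementary shift \(\pi_{p,q}\) in the target there is a shift \(\gamma\) with \(\Gamma(\gamma)=\pi_{p,q}\), and hence \(\Gamma^{-1}\) intertwines shifts as well. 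If \(T_{\mathcal{M}}(x)=T_{\mathcal{M}}(y)\) in the target, the lemma on terminus-maintaining bijections gives decompositions of \(x\) and \(y\) related by a shift. Pulling that shift back shows \(T_{\mathcal{M}}(\Gamma^{-1}(x))=T_{\mathcal{M}}(\Gamma^{-1}(y))\).
Third, \(\Gamma\) is root-preserving. Its image is a full cylinder, so \(\Gamma(\varepsilon_R)\) is the unit of the order \(\preceq_{\mathcal{M}}\). The only point needing care is that a surjective monoid isomorphism must send \(\varepsilon_R\), the unique maximal element under \(\preceq\), to \(\varepsilon_R\).
With these three facts in hand, \lemref{first-impl-lem} gives \(n=m\) and \(\gcd(a,n-1)=\gcd(b,m-1)\).

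\emph{Sufficiency.} Assume \(m=n\) and \(\gcd(a,n-1)=\gcd(b,n-1)\). By the Bézout computation following \lemref{floating-to-rooted-lem}, there are \(l\ge 1\) coprime to \(n-1\) and \(k\ge 0\) with \(l\cdot a=k(n-1)+b\). By \lemref{reach-lem}, \((l,n)\) is reachable. So there is a shift-surjective floating gluing diagram from \(G_n\) to itself with \(x_v=l\cdot v\).

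\lemref{floating-to-rooted-lem} then produces a shift-surjective (in particular surjective) gluing diagram connecting \((G_{a,n},R)\) and \((G_{b,n},R)\). Its \(x_R=R\) and \(x_v=l\cdot v\) are nonzero, so the injectivity proposition shows the induced \(\Gamma\) is injective. \propref{surj-lem} shows it is surjective, and \propref{shift-pres-lem} shows it is shift-preserving. Together with shift-surjectivity, \(\Gamma\) is the required isomorphism.

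\emph{Main obstacle.} The delicate step is the necessity direction: checking that the hypotheses give root-preservation of \(\Gamma\) and terminus-preservation of \(\Gamma^{-1}\). I would first try to obtain both from \(\Gamma_s\) being an isomorphism, by pulling shifts back through \(\Gamma^{-1}\) and comparing termini via \(\Gamma_T\). The sufficiency direction is essentially an assembly of lemmas already proved.
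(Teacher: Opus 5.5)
Your overall route is the paper's. For necessity, the paper cites \lemref{first-impl-lem} after remarking that a shift-surjective \(\Gamma\) has a shift-surjective, hence terminus-preserving, inverse; that is the same pull-back idea you use. For sufficiency, the paper combines B\'ezout, \lemref{reach-lem} and \lemref{floating-to-rooted-lem}, exactly as you do. One small point on the terminus step: the terminus-bijection lemma is stated only for summands of \(\varepsilon_R\). For general \(x,y\), its proof still gives \(x=\sum_i p_i\) and \(y=\sum_i q_i\) with \(T(p_i)=T(q_i)\); pull back each \(\pi_{p_i,q_i}\) separately and add.

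The step that fails as written is your root-preservation argument, because \(\varepsilon_R\) is not the unique maximal element for \(\preceq_{\mathcal{M}}\). With the paper's convention (\(x\preceq y\) iff \(x=y+a\)) the greatest element is \(0\). In the reverse order, \(2\varepsilon_R\) lies above \(\varepsilon_R\).

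The paper does not prove this step. Root-preservation is built into its framework: \(\Gamma(\pi)\), and hence \(\Gamma_s\) and shift-surjectivity, are only defined via \lemref{image-shift-lem}, which assumes \(\Gamma\) root-preserving.

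If you want to derive it from \(\Gamma\) being a monoid isomorphism, use an intrinsic description instead. Let \(F\) be the set of \(x\) for which there is no \(y\neq 0\) with \(x\preceq 2y\). Refine to a level set as in the proof of \lemref{lem2} and apply \lemref{lem1}. This shows \(F\) is exactly the set of sums of distinct paths of a common basis. Three facts follow:
\begin{itemize}
\item \(\varepsilon_R\in F\);
\item every element of \(F\) is a summand of \(\varepsilon_R\);
\item \(\varepsilon_R\) is the only element of \(F\) with that property, since \(\varepsilon_R=\varepsilon_R+c\) forces \(c=0\) by \lemref{lem1}.
\end{itemize}
All of this is preserved by monoid isomorphisms, so \(\Gamma(\varepsilon_R)=\varepsilon_R\).
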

\begin{proof}
	The "only if" implication was proven in \lemref{first-impl-lem}. The second implication follows from combining \lemref{reach-lem} and \lemref{floating-to-rooted-lem}.
\end{proof}

As an example we will construct a shift-surjective gluing diagram connecting \(G_{4,5}\) and \(G_{8,5}\). For this we will note that 
	\[3\cdot 4=(5-1)\cdot 1 + 8\]
This means that we have a basis in \(\mathcal{P}(G_{8,5},R)\) that can be partitioned into \(4\) sets of cardinality \(3\), this basis will be our \(B_R\) with the partitions being \(C_{d_1},C_{d_2},C_{d_3},C_{d_4}\) and arbitrary functions from \(C_{d_j}\) to \(l\cdot v\) being \(\gamma_{d_j}\) for each \(1\leq j\leq 4\). Visually we can represent it as follows.

\begin{center}
	\begin{tikzpicture}
		[edge from parent/.style={draw,-latex},
			level distance=10mm,
			level 1/.style={sibling distance=5mm},
			level 2/.style={sibling distance=5mm,},
			level 3/.style={sibling distance=3mm,},
			label distance=-2mm]
		\node[big hollow circle] (r1) at (-2,0) {}
			child[sibling distance=10mm] {node[big hollow green square] {}}
			child[sibling distance=10mm] {node[big hollow red square] {}}
			child[sibling distance=10mm] {node[big hollow blue square] {}}
			child[sibling distance=10mm] {node[big hollow cyan square] {}};

		\node[big solid circle] (r21) at (4,0)	{}
			child {node[big solid green square, label={[font=\small,text=black]-10:$1$}] {}}
			child {node[big solid green square, label={[font=\small,text=black]-10:$2$}] {}}
			child {node[big solid green square, label={[font=\small,text=black]-10:$3$}] {}}
			child {node[big solid red square, label={[font=\small,text=black]-10:$1$}] {}}
			child {node[big solid red square, label={[font=\small,text=black]-10:$2$}] {}}
			child {node[big solid red square, label={[font=\small,text=black]-10:$3$}] {}}
			child {node[big solid blue square, label={[font=\small,text=black]-10:$1$}] {}}
			child {node[big solid square] {}
					child {node[big solid blue square, label={[font=\small,text=black]-10:$2$}] {}}
					child {node[big solid blue square, label={[font=\small,text=black]-10:$3$}] {}}
					child {node[big solid cyan square, label={[font=\small,text=black]-10:$1$}] {}}
					child {node[big solid cyan square, label={[font=\small,text=black]-10:$2$}] {}}
					child {node[big solid cyan square, label={[font=\small,text=black]-10:$3$}] {}}
				}
			;
		
	\end{tikzpicture}
\end{center}

To construct \(B_v\) we will need to construct a floating gluing diagram from \(G_5\) to itself with \(x_v=3\cdot v\). We will do this in \lemref{reach-lem}, by looking at the series of \((l,n)\)'s
	\[(1,2)\to(2,2)\to(3,2)\to(3,5)\]
where at each arrow represent either going from \((l,n)\) to \((l+(n-1),n)\), or going from \((l,n)\) to \((l,n+l)\), which can be accomplished by expansion or addition respectively, of the previously constructed gluing diagram. We will start with \(\mathfrak{G}_0\) being the trivial gluing diagram connecting \(G_2\) to itself with \(x_v=v\), which can be represented visually as	follows.
\begin{center}
	\begin{tikzpicture}
		[edge from parent/.style={draw,-latex},
			level distance=10mm,
			level 1/.style={sibling distance=15mm},
			level 2/.style={sibling distance=5mm,},
			level 3/.style={sibling distance=3mm,},
			label distance=-2mm]
		\node[big hollow square] (r1) at (-2,0) {}
			child[sibling distance=15mm] {node[big hollow green square] {}}
			child[sibling distance=15mm] {node[big hollow red square] {}};

		\node[big solid square] (r21) at (4,0)	{\textcolor{white}{$1$}}
			child {node[big solid green square, label={[font=\small,text=black]-10:$1$}] {}}
			child {node[big solid red square, label={[font=\small,text=black]-10:$1$}] {}};		
	\end{tikzpicture}
\end{center}
To go from \((1,2)\) to \((2,2)\), we get a gluing set \(\mathfrak{G}_1\) from \(G_2\) to itself by expanding over the unique element \(\mathbf{v}\in x_v=v\), i.e. \(\mathfrak{G}_1=\mathfrak{G}^{\mathbf{v}}_0\), which can be represented visually as follows.

\begin{center}
	\begin{tikzpicture}
		[edge from parent/.style={draw,-latex},
			level distance=10mm,
			level 1/.style={sibling distance=15mm},
			level 2/.style={sibling distance=5mm,},
			level 3/.style={sibling distance=3mm,},
			label distance=-2mm]
		\node[big hollow square] (r1) at (-2,0) {}
			child[sibling distance=15mm] {node[big hollow green square] {}}
			child[sibling distance=15mm] {node[big hollow red square] {}};

		\node[big solid square] (r21) at (2.5,0)	{\textcolor{white}{$1$}}
			child {node[big solid green square, label={[font=\small,text=black]-10:$1$}] {}}
			child {node[big solid green square, label={[font=\small,text=black]-10:$2$}] {}};	
		\node[big solid square] (r21) at (5.5,0)	{\textcolor{white}{$2$}}
			child {node[big solid red square, label={[font=\small,text=black]-10:$1$}] {}}
			child {node[big solid red square, label={[font=\small,text=black]-10:$2$}] {}};	
	\end{tikzpicture}
\end{center}
This way we get \(\mathfrak{G}_1\), a gluing set from \(G_2\) to  itself with \(x_v=2\cdot v\). To go from \((2,2)\) to \((3,2)\) we will perform another expansion over some \(\mathbf{v}\in x_v=2\cdot v\) (we will take the square with number \(1\), but they both are unblocked) and get \(\mathfrak{G}_2:=\mathfrak{G}^{\mathbf{v}}_1\), which we can represent visually as follows.

\begin{center}
	\begin{tikzpicture}
		[edge from parent/.style={draw,-latex},
			level distance=10mm,
			level 1/.style={sibling distance=10mm},
			level 2/.style={sibling distance=5mm,},
			level 3/.style={sibling distance=3mm,},
			label distance=-2mm]
		\node[big hollow square] (r1) at (-1,0) {}
			child[sibling distance=15mm] {node[big hollow green square] {}}
			child[sibling distance=15mm] {node[big hollow red square] {}}
			;

		\node[big solid square] (r21) at (3.5,0)	{\textcolor{white}{$1$}}
			child {node[big solid green square, label={[font=\small,text=black]-10:$1$}] {}}
			child {node[big solid green square, label={[font=\small,text=black]-10:$2$}] {}}
			;
		\node[big solid green square, label={[font=\small,text=black]-10:$3$}] (r21) at (5.25,0)	{\textcolor{white}{$2$}};	
		\node[big solid square] (r21) at (7,0)	{\textcolor{white}{$3$}}
			child {node[big solid square] {}
				child[sibling distance=10mm] {node[big solid red square, label={[font=\small,text=black]-10:$1$}] {}}
				child[sibling distance=10mm] {node[big solid red square, label={[font=\small,text=black]-10:$2$}] {}}
				}
			child {node[big solid red square, label={[font=\small,text=black]-10:$3$}] {}}
			;	
	\end{tikzpicture}
\end{center}
So we get a gluing diagram \(\mathfrak{G}_2\) from \(G_2\) to itself with \(x_v=3\cdot v\). To go from \((3,2)\) to \((3,5)\) we will perform addition on \(\mathfrak{G}_2\) and get \(\mathfrak{G}_3:=\mathfrak{G}^+_2\). This we can represent visually as follows.
	
\begin{center}
	\begin{tikzpicture}
		[edge from parent/.style={draw,-latex},
			level distance=10mm,
			level 1/.style={sibling distance=5mm},
			level 2/.style={sibling distance=5mm,},
			level 3/.style={sibling distance=3mm,},
			label distance=-2mm]
		\node[big hollow square] (r1) at (-1,0) {}
			child[sibling distance= 10mm] {node[big hollow green square] {}}
			child[sibling distance= 10mm] {node[big hollow red square] {}}
			child[sibling distance= 10mm] {node[big hollow blue square] {}}
			child[sibling distance= 10mm] {node[big hollow yellow square] {}}
			child[sibling distance= 10mm] {node[big hollow cyan square] {}}
			;

		\node[big solid square] (r21) at (3.5,0)	{\textcolor{white}{$1$}}
			child {node[big solid green square, label={[font=\small,text=black]-10:$1$}] {}}
			child {node[big solid green square, label={[font=\small,text=black]-10:$2$}] {}}
			child {node[big solid blue square, label={[font=\small,text=black]-10:$1$}] {}}
			child {node[big solid blue square, label={[font=\small,text=black]-10:$2$}] {}}
			child {node[big solid blue square, label={[font=\small,text=black]-10:$3$}] {}}
			;
		\node[big solid green square, label={[font=\small,text=black]-10:$3$}] (r21) at (5.25,0)	{\textcolor{white}{$2$}};	
		\node[big solid square] (r21) at (7,0)	{\textcolor{white}{$3$}}
			child {node[big solid square] {}
				child {node[big solid red square, label={[font=\small,text=black]-10:$1$}] {}}
				child {node[big solid red square, label={[font=\small,text=black]-10:$2$}] {}}
				child {node[big solid yellow square, label={[font=\small,text=black]-10:$1$}] {}}
				child {node[big solid yellow square, label={[font=\small,text=black]-10:$2$}] {}}
				child {node[big solid yellow square, label={[font=\small,text=black]-10:$3$}] {}}}
			child {node[big solid red square, label={[font=\small,text=black]-10:$3$}] {}}
			child {node[big solid cyan square, label={[font=\small,text=black]-10:$1$}] {}}
			child {node[big solid cyan square, label={[font=\small,text=black]-10:$2$}] {}}
			child {node[big solid cyan square, label={[font=\small,text=black]-10:$3$}] {}}
			;	
	\end{tikzpicture}
\end{center}

So if we combine this gluing diagram \(G_3\) with the partitioned basis of \(\mathcal{P}(G_{8,5},R)\) with this floating gluing diagram and set the starting basis to be \(\{\varepsilon_R\}\), we get a gluing diagram connecting \((G_{4,5},R)\) and \((G_{8,5},R)\).
\begin{figure}
	\begin{center}
		\begin{tikzpicture}
		[edge from parent/.style={draw,-latex},
			level distance=10mm,
			level 1/.style={sibling distance=5mm},
			level 2/.style={sibling distance=5mm,},
			level 3/.style={sibling distance=3mm,},
			label distance=-2mm]
		\node[big hollow circle] (r1) at (-2,0) {}
			child[sibling distance=10mm] {node[big hollow green square] {}}
			child[sibling distance=10mm] {node[big hollow red square] {}}
			child[sibling distance=10mm] {node[big hollow blue square] {}}
			child[sibling distance=10mm] {node[big hollow cyan square] {}};

		\node[big solid circle] (r21) at (4,0)	{}
			child {node[big solid green square, label={[font=\small,text=black]-10:$1$}] {}}
			child {node[big solid green square, label={[font=\small,text=black]-10:$2$}] {}}
			child {node[big solid green square, label={[font=\small,text=black]-10:$3$}] {}}
			child {node[big solid red square, label={[font=\small,text=black]-10:$1$}] {}}
			child {node[big solid red square, label={[font=\small,text=black]-10:$2$}] {}}
			child {node[big solid red square, label={[font=\small,text=black]-10:$3$}] {}}
			child {node[big solid blue square, label={[font=\small,text=black]-10:$1$}] {}}
			child {node[big solid square] {}
					child {node[big solid blue square, label={[font=\small,text=black]-10:$2$}] {}}
					child {node[big solid blue square, label={[font=\small,text=black]-10:$3$}] {}}
					child {node[big solid cyan square, label={[font=\small,text=black]-10:$1$}] {}}
					child {node[big solid cyan square, label={[font=\small,text=black]-10:$2$}] {}}
					child {node[big solid cyan square, label={[font=\small,text=black]-10:$3$}] {}}
				}
			;
		
	\end{tikzpicture}

	\begin{tikzpicture}
		[edge from parent/.style={draw,-latex},
			level distance=10mm,
			level 1/.style={sibling distance=5mm},
			level 2/.style={sibling distance=5mm,},
			level 3/.style={sibling distance=3mm,},
			label distance=-2mm]
		\node[big hollow square] (r1) at (-1,0) {}
			child[sibling distance= 10mm] {node[big hollow green square] {}}
			child[sibling distance= 10mm] {node[big hollow red square] {}}
			child[sibling distance= 10mm] {node[big hollow blue square] {}}
			child[sibling distance= 10mm] {node[big hollow yellow square] {}}
			child[sibling distance= 10mm] {node[big hollow cyan square] {}}
			;

		\node[big solid square] (r21) at (3.5,0)	{\textcolor{white}{$1$}}
			child {node[big solid green square, label={[font=\small,text=black]-10:$1$}] {}}
			child {node[big solid green square, label={[font=\small,text=black]-10:$2$}] {}}
			child {node[big solid blue square, label={[font=\small,text=black]-10:$1$}] {}}
			child {node[big solid blue square, label={[font=\small,text=black]-10:$2$}] {}}
			child {node[big solid blue square, label={[font=\small,text=black]-10:$3$}] {}}
			;
		\node[big solid green square, label={[font=\small,text=black]-10:$3$}] (r21) at (5.25,0)	{\textcolor{white}{$2$}};	
		\node[big solid square] (r21) at (7,0)	{\textcolor{white}{$3$}}
			child {node[big solid square] {}
				child {node[big solid red square, label={[font=\small,text=black]-10:$1$}] {}}
				child {node[big solid red square, label={[font=\small,text=black]-10:$2$}] {}}
				child {node[big solid yellow square, label={[font=\small,text=black]-10:$1$}] {}}
				child {node[big solid yellow square, label={[font=\small,text=black]-10:$2$}] {}}
				child {node[big solid yellow square, label={[font=\small,text=black]-10:$3$}] {}}}
			child {node[big solid red square, label={[font=\small,text=black]-10:$3$}] {}}
			child {node[big solid cyan square, label={[font=\small,text=black]-10:$1$}] {}}
			child {node[big solid cyan square, label={[font=\small,text=black]-10:$2$}] {}}
			child {node[big solid cyan square, label={[font=\small,text=black]-10:$3$}] {}}
			;	
	\end{tikzpicture}
\end{center}
\caption{A shift surjective diagram connecting \(G_{4,5}\) and \(G_{8,5}\)}
\end{figure}
\bibliographystyle{plain}
\bibliography{bibliography}
\end{document}